\def\Ac{E\setminus A}
\def\gammabar{\bar\gamma}
\DeclareMathOperator*\osc{osc}
\def\lOu{\lambda_0^{\rm u}}
\def\lOs{\lambda_0^{\rm s}}
\def\loneu{\lambda_1^{\rm u}}
\def\lones{\lambda_1^{\rm s}}
\def\piOu{\pi_0^{\rm u}}
\def\piOs{\pi_0^{\rm s}}
\def\Ku{K^{\rm u}}
\def\Ks{K^{\rm s}}
\def\ku{k^{\rm u}}
\def\logs{\abs{\log\sigma}}
\def\hper{h^{\text{\rm{per}}}}
\def\hpertilde{\tilde{h}^{\text{\rm{per}}}}
\begin{document}


\title{On the noise-induced passage\\through an unstable periodic orbit II:\\ 
General case}
\author{Nils Berglund\thanks{Supported by ANR project MANDy, Mathematical
Analysis of Neuronal Dynamics, ANR-09-BLAN-0008-01.} 
~and Barbara Gentz\thanks{Supported by CRC 701 \lq\lq Spectral Structures and Topological Methods in Mathematics\rq\rq}}
\date{}   

\maketitle

\begin{abstract}
\noindent
Consider a dynamical system given by a planar differential equation, which
exhibits an unstable periodic orbit surrounding a stable periodic orbit. It is
known that under random perturbations, the distribution of  locations where the
system's first exit from the interior of the unstable orbit occurs, typically 
displays the phenomenon of cycling: The distribution of first-exit locations is
translated along the unstable periodic orbit proportionally to the logarithm of
the noise intensity as the noise intensity goes to zero. We show that for a
large class of such systems, the cycling profile is given, up to a
model-dependent change of coordinates, by a universal function given by a
periodicised Gumbel distribution. Our techniques combine action-functional or
large-deviation results with properties of random Poincar\'e maps
described by continuous-space discrete-time Markov chains. 
\end{abstract}

\leftline{\small{\it Date.\/} 
August 13, 2012. Revised, July 24, 2013. 
}
\leftline{\small 2010 {\it Mathematical Subject Classification.\/} 
60H10, 		
34F05   	
(primary), 
60J05,   	
60F10   	
(secondary)}
\noindent{\small{\it Keywords and phrases.\/}
Stochastic exit problem, 
diffusion exit,  
first-exit time, 
characteristic
boundary, 
limit cycle, 
large deviations, 
synchronization, 
phase slip, 
cycling, 
stochastic resonance,
Gumbel distribution.}  


\section{Introduction}
\label{sec_in}

Many interesting effects of noise on deterministic dynamical systems can be
expressed as a stochastic exit problem. Given a subset $\cD$ of phase space,
usually assumed to be positively invariant under the deterministic flow, the
stochastic exit problem consists in determining when and where the noise causes
solutions to leave $\cD$. 

If the deterministic flow points inward $\cD$ on the boundary $\partial\cD$,
then the theory of large deviations provides useful answers to the exit
problem in the limit of small noise intensity~\cite{FW}. Typically, the exit
locations are concentrated in one or several points, in which the so-called
quasipotential is minimal. The mean exit time is exponentially long as a function of the noise intensity, and the
distribution of exit times is asymptotically exponential~\cite{Day1}. 

The situation is more complicated when $\partial\cD$, or some part of it, is
invariant under the deterministic flow. Then the theory of large deviations
does not suffice to characterise the distribution of exit locations. An
important particular case is the one of a two-dimensional deterministic
ordinary differential equation (ODE), admitting an unstable periodic orbit. Let
$\cD$ be the part of the plane inside the periodic orbit. Day~\cite{Day5,Day7}
discovered a striking phenomenon called~\emph{cycling}: As the noise intensity
$\sigma$ goes to zero, the exit distribution rotates around the boundary
$\partial\cD$, by an angle proportional to $\abs{\log\sigma}$. Thus the exit
distribution does not converge as $\sigma\to0$. The phenomenon of cycling has
been further analysed in several works by Day~\cite{Day3,Day6,Day4}, by Maier
and Stein~\cite{MS4,MS1}, and by Getfert and
Reimann~\cite{Getfert_Reimann_2009,Getfert_Reimann_2010}.

The noise-induced exit through an unstable periodic orbit has many important
applications. For instance, in synchronisation it determines the distribution of
noise-induced phase slips \cite{PRK}. The first-exit distribution also
determines the residence-time
distribution in stochastic resonance~\cite{GHM,MS2,BG9}. In neuroscience, the
interspike interval statistics of spiking neurons is described by a
stochastic exit problem~\cite{Tuckwell75,Tuckwell,BG_neuro09,BerglundLandon}. In
certain cases, as
for the Morris--Lecar model~\cite{MorrisLecar81} for
a region of parameter values, the spiking mechanism involves the passage through an
unstable periodic orbit (see,
e.g.~\cite{RinzelErmentrout,Tateno_Pakdaman_2004,
Tsumoto_etal_2006,Ditlevsen_Greenwood_12}). 
In all these cases, it is important to know the distribution of first-exit
locations as precisely as possible. 

In~\cite{BG7}, we introduced a simplified model, consisting of two linearised
systems patched together by a switching mechanism, for which we obtained an explicit expression for the
exit distribution. In appropriate coordinates, the distribution has the form of
a periodicised Gumbel distribution, which is common in extreme-value theory. 
Note that the standard Gumbel distribution also occurs in the description of
reaction paths for overdamped Langevin
dynamics~\cite{CerouGuyaderLelievreMalrieu12}. 
The aim of the present work is to generalise the results of~\cite{BG7} to a
larger class of more realistic systems. Two important ingredients of the
analysis are large-deviation estimates near the unstable periodic orbit, and
the theory of continuous-space Markov chains describing random Poincar\'e maps. 

The remainder of this paper is organised as follows. In Section~\ref{sec_res},
we define the system under study, discuss the heuristics of its behaviour, 
state the main result (Theorem~\ref{main_theorem}) and discuss its consequences.
Subsequent sections are devoted to the proof of this result. 
Section~\ref{sec_cs} describes a coordinate transformation to polar-type
coordinates used throughout the analysis. Section~\ref{sec_ld} contains the
large-deviation estimates for the dynamics near the unstable orbit.
Section~\ref{sec_mc} states results on Markov chains and random Poincar\'e
maps, while Section~\ref{sec_sp} contains estimates on the sample-path behaviour
needed to apply the results on Markov chains. Finally, in Section~\ref{sec_el}
we complete the proof of Theorem~\ref{main_theorem}. 

\subsubsection*{Acknowledgement} 
We would like to thank the referees for their careful reading of the first
version of this manuscript, and for their constructive suggestions, which
led to improvements of the main result as well as the presentation.



\section{Results}
\label{sec_res}


\subsection{Stochastic differential equations with an unstable periodic orbit}
\label{ssec_reso}

Consider the two-dimensional deterministic ODE 
\begin{equation}
\label{csd1}
\dot z = f(z)\;,
\end{equation}
where $f\in\cC^2(\cD_0,\R^2)$ for some open, connected set
$\cD_0\subset\R^2$. We assume that this system admits two distinct periodic
orbits, that is, there are periodic functions $\gamma_\pm: \R \to \R^2$, of
respective periods $T_\pm$, such that 
\begin{equation}
\label{csd2}
\dot\gamma_\pm(t) = f(\gamma_\pm(t)) 
\qquad\qquad
\forall t\in\R\;.
\end{equation}
We set $\Gamma_\pm(\ph) = \gamma_\pm(T_\pm\ph)$, so that
$\ph\in\fS^1=\R/\Z$ gives an equal-time parametrisation of the orbits.
Indeed, 
\begin{equation}
\label{csd3}
\dtot{}{\ph} \Gamma_\pm(\ph) = T_\pm f(\Gamma_\pm(\ph))\;,
\end{equation}
and thus $\dot\ph=1/T_\pm$ is constant on the periodic orbits. 

Concerning the geometry, we will assume that the orbit $\Gamma_-$ is
contained in the interior of $\Gamma_+$, and that the annulus-shaped region
$\cS$ between the two orbits contains no invariant proper subset. This
implies in particular that the orbit through any point in $\cS$ approaches
one of the orbits $\Gamma_\pm$ as $t\to\infty$ and $t\to-\infty$. 

\begin{figure}
\centerline{\includegraphics*[clip=true,height=50mm]{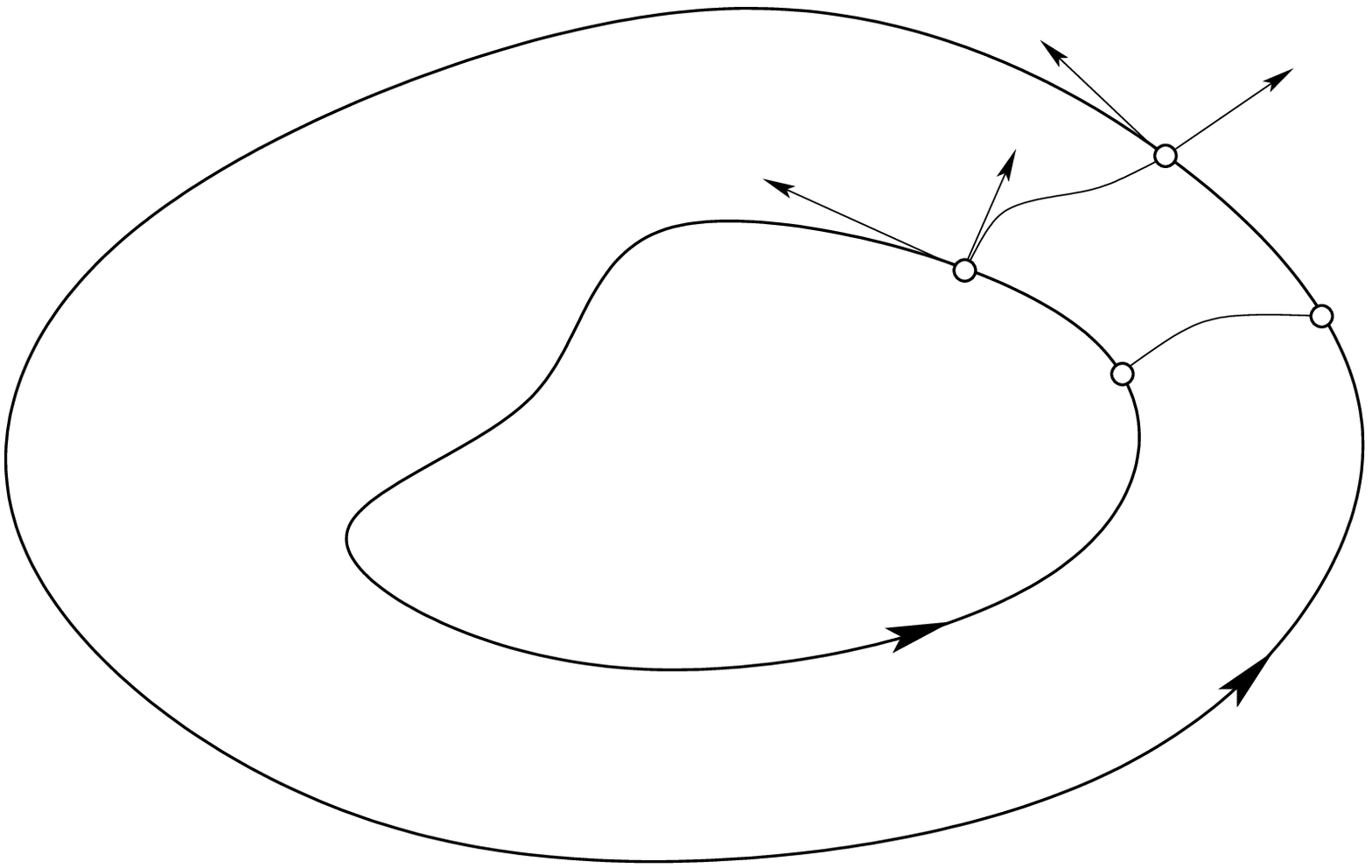}}
 \figtext{
	\writefig	8.9	3.1	$\Gamma_-(0)$
	\writefig	11.3	3.6	$\Gamma_+(0)$
	\writefig	10.4	3.3	$\Delta_0$
	\writefig	8.0	3.6	$\Gamma_-(\ph)$
	\writefig	10.4	4.5	$\Gamma_+(\ph)$
	\writefig	9.6	4.1	$\Delta_\ph$
	\writefig	8.3	4.7	$u_-(\ph)$
	\writefig	11.0	5.0	$u_+(\ph)$
	\writefig	6.4	4.5	$f(\Gamma_-(\ph))$
	\writefig	9.5	5.4	$f(\Gamma_+(\ph))$
	\writefig	5.4	3.0	$\gamma_-$
	\writefig	3.3	3.9	$\gamma_+$
 }
 \caption[]
 {Geometry of the periodic orbits. The stable orbit $\gamma_-$ is located
inside the unstable orbit $\gamma_+$. $\Gamma_\pm(\ph)$ denote parametrisations
of the orbits, and $u_\pm(\ph)$ are eigenvectors of the monodromy matrix used to
construct a set of polar-type coordinates.}
\label{fig_csd}
\end{figure}

Let $A_\pm(\ph) = \sdpar fz(\Gamma_\pm(\ph))$ denote the Jacobian
matrices of $f$ at $\Gamma_\pm(\ph)$. The principal solutions associated with
the linearisation around the periodic orbits 
are defined by 
\begin{equation}
\label{csd4}
\sdpar U\ph _\pm(\ph,\ph_0) = 
T_\pm A_\pm(\ph) U_\pm(\ph,\ph_0)\;, 
\qquad\qquad
U_\pm(\ph_0,\ph_0) = \one\;.
\end{equation}
In particular, the monodromy matrices $U_\pm(\ph+1,\ph)$ satisfy 
\begin{equation}
\det U_\pm(\ph+1,\ph) 
= \exp\biggset{T_\pm \int_\ph^{\ph+1} \Tr A_\pm(\ph')\,\6\ph'}
\;,
\label{csd5}
\end{equation} 
with $\Tr A_\pm(\ph') = \divergence f(\Gamma_\pm(\ph'))$. 
Taking the derivative of~\eqref{csd3} shows that  each monodromy matrix
$U_\pm(\ph+1,\ph)$ admits $f(\Gamma_\pm(\ph))$ as eigenvector with
eigenvalue $1$. The other eigenvalue is thus also independent of $\ph$,
and we denote it $\e^{\pm\lambda_\pm T_\pm}$, where 
\begin{equation}
\label{csd6}
\pm\lambda_\pm = \int_0^1 \divergence f(\Gamma_\pm(\ph))\,\6\ph
\end{equation}
are the Lyapunov exponents of the orbits. We assume that $\lambda_+$ and
$\lambda_-$ are both positive, which implies that $\Gamma_-$ is stable and
$\Gamma_+$ is unstable. The products $\lambda_\pm T_\pm$ have the
following geometric interpretation: a small ball centred in
the stable periodic orbit will shrink by a factor $\e^{-\lambda_-T_-}$
at each revolution around the orbit, while a small ball centred in the
unstable orbit will be magnified by a factor $\e^{\lambda_+T_+}$.

Consider now the stochastic differential equation (SDE) 
\begin{equation}
\label{css1}
\6z_t = f(z_t)\,\6t + \sigma g(z_t)\6W_t\;,
\end{equation}
where $f$ satisfies the same assumptions as before, $\set{W_t}_t$ is a
$k$-dimensional standard Brownian motion, $k\geqs 2$, and
$g\in\cC^1(\cD_0,\R^{2\times k})$
satisfies the uniform ellipticity condition 
\begin{equation}
 \label{ellipticity}
 c_1 \norm{\xi^2} 
 \leqs \pscal{\xi}{g(z)\transpose{g(z)}\xi}
 \leqs c_2 \norm{\xi^2}
 \qquad
 \forall z\in\cD_0\ \forall \xi\in\R^2
\end{equation}
with $c_2\geqs c_1>0$. 

\begin{prop}[Polar-type coordinates]
There exist $L>1$ and a set of coordinates $(r,\ph)\in(-L,L)\times\R$, in which
the SDE~\eqref{css1} takes the form 
\begin{align}
\nonumber
\6r_t &= f_r(r_t,\ph_t;\sigma) \,\6t + \sigma g_r(r_t,\ph_t) \6W_t\;,\\
\6\ph_t &= f_\ph(r_t,\ph_t;\sigma) \,\6t 
+ \sigma g_\ph(r_t,\ph_t) \6W_t\;.
\label{polar_coords}
\end{align}
The functions $f_r,f_\ph,g_r$ and $g_\ph$ are periodic with period $1$ in
$\ph$, and $g_r,g_\ph$ satisfy a uniform ellipticity condition similar
to~\eqref{ellipticity}. 
The unstable orbit lies in $r=1+\Order{\sigma^2}$, and 
\begin{align}
\nonumber
f_r(r,\ph) &= \lambda_+(r-1) + \Order{(r-1)^2} \;,\\
f_\ph(r,\ph) &= \frac1{T_+} + \Order{(r-1)^2}
\label{polar1}
\end{align}
as $r\to1$. The stable orbit lies in $r=-1+\Order{\sigma^2}$, and 
\begin{align}
\nonumber
f_r(r,\ph) &= -\lambda_-(r+1) + \Order{(r+1)^2} \;,\\
f_\ph(r,\ph) &= \frac1{T_-} + \Order{(r+1)^2}
\label{polar2}
\end{align}
as $r\to-1$. Furthermore, $f_\ph$ is strictly larger than a positive constant
for all $(r,\ph)\in(-L,L)\times\R$, and $f_r$ is negative for $-1<r<1$. 
\end{prop}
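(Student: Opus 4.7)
The plan is to construct the required diffeomorphism $h:(-L,L)\times\fS^1\to\cD_0$ in two stages — local Floquet charts near each orbit and a global gluing across the annulus $\cS$ — and then to read off \eqref{polar_coords} by a single application of Itô's formula.

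For the local stage, I would exploit the Floquet structure implicit in \eqref{csd5}. Each monodromy matrix $U_\pm(\ph+1,\ph)$ admits $f(\Gamma_\pm(\ph))$ as a period-$1$ eigenvector of eigenvalue $1$; choosing a complementary Floquet eigenvector $u_\pm(\ph)$ smooth and periodic in $\ph$ (after passing to a double cover if the multiplier $\e^{\pm\lambda_\pm T_\pm}$ is negative), the map
\begin{equation*}
(\rho,\ph)\mapsto\Gamma_\pm(\ph)+\rho\,u_\pm(\ph)
\end{equation*}
is a local diffeomorphism from $(-\delta,\delta)\times\fS^1$ onto a tubular neighbourhood of $\Gamma_\pm$. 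In these coordinates the deterministic flow has Floquet normal form $\dot\rho=\pm\lambda_\pm\rho+\Order{\rho^2}$ and $\dot\ph=1/T_\pm+\Order{\rho^2}$; the absence of an $\Order{\rho}$ term in $\dot\ph$ comes from the equal-time parametrisation $\Gamma_\pm(\ph)=\gamma_\pm(T_\pm\ph)$, combined if necessary with a standard first-order normal-form change of the transverse coordinate.

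For the global stage, I would fix a smooth arc $\Delta_0$ from $\Gamma_-(0)$ to $\Gamma_+(0)$ that agrees with the two Floquet segments $\rho\mapsto\Gamma_\pm(0)+\rho\,u_\pm(0)$ near its endpoints and is everywhere transverse to $f$. The hypothesis that $\cS$ contains no invariant proper subset forces every orbit in $\interior\cS$ to limit onto $\Gamma_+$ in backward time and onto $\Gamma_-$ in forward time, so every such orbit crosses $\Delta_0$ in bounded time; this allows me to define $\ph(z)$ as the (suitably normalised) flow time to reach $\Delta_0$, with $\ph$ coinciding with the equal-time parametrisation on both boundaries. A strictly monotone smooth rescaling of the transverse position along each level set $\Delta_\ph$ then produces the radial coordinate $r\in(-1,1)$, matched smoothly to the Floquet radius near $r=\pm 1$; the resulting chart extends to $(-L,L)\times\fS^1$ for some $L>1$. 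Transversality of $\Delta_\ph$ to $f$ gives $f_\ph>0$ uniformly, while the signs of the Lyapunov exponents together with the no-invariant-subset hypothesis force $f_r<0$ on $-1<r<1$.

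Itô's formula applied to $(r_t,\ph_t)=h^{-1}(z_t)$ yields \eqref{polar_coords} with $f_{r,\ph}(r,\ph;\sigma)=\hat f_{r,\ph}(r,\ph)+\tfrac12\sigma^2 b_{r,\ph}(r,\ph)$, where $\hat f$ is the pushforward of $f$ by $h^{-1}$ and $b$ is the standard Itô correction obtained from the Hessians of $h^{-1}$ contracted with $gg^T$, while $(g_r,g_\ph)(r,\ph)=Dh^{-1}(h(r,\ph))\,g(h(r,\ph))$. Periodicity in $\ph$ is built into the construction, uniform ellipticity of $g_r,g_\ph$ follows from invertibility of $Dh$ and ellipticity of $g$, and \eqref{polar1}--\eqref{polar2} are direct readouts of the Floquet normal form; the $\Order{\sigma^2}$ shift of the orbits is precisely the displacement of the zero of $f_r(\cdot,\ph;\sigma)$ caused by the smooth correction $b_r$. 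The main obstacle is the global gluing step: one must verify that $h$ is genuinely a $\cC^2$ diffeomorphism and that $f_\ph$ is uniformly bounded below by a positive constant, which ultimately rests on a careful transversality argument for $\Delta_0$ together with the hypothesis that no interior orbit in $\cS$ closes up on itself.
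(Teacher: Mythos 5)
Your deterministic construction is essentially the paper's own (Proposition~\ref{prop_csd}): Floquet eigenvectors $u_\pm(\ph)$ of the monodromy matrices to fix the chart to first order near $\Gamma_\pm$ (the multipliers $\e^{\pm\lambda_\pm T_\pm}$ are positive, so no double cover is needed), a transversal family $\Delta_\ph$ interpolating between the orbits, and a choice of parametrisation making $f_\ph>0$ and $f_r<0$ on the annulus (note that in the paper the sign of $f_r$ is \emph{arranged} by choosing the radial parametrisation of $\Delta_{\ph}$ monotonically along orbits, rather than deduced from the hypotheses alone). Up to and including the application of It\^o's formula, which gives~\eqref{css2} with $f_{r,\ph}(r,\ph;\sigma)=f^0_{r,\ph}(r,\ph)+\sigma^2 f^1_{r,\ph}(r,\ph)$, your argument is sound and parallel to the paper's.

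The genuine gap is in your last claim, that \lq\lq the $\Order{\sigma^2}$ shift of the orbits is precisely the displacement of the zero of $f_r(\cdot,\ph;\sigma)$ caused by the It\^o correction\rq\rq. After It\^o's formula the radial drift no longer vanishes on the lines $r=\pm1$: one has $f_r(\pm1,\ph;\sigma)=\sigma^2 f^1_r(\pm1,\ph)$, which is generically a nonzero, $\ph$-\emph{dependent} quantity, so its zero set is a $\ph$-dependent curve and not a constant level $r=\mathrm{const}$. Consequently the expansions~\eqref{polar1}--\eqref{polar2}, in the form actually needed later (the drift vanishing exactly on the reference levels, cf.\ the bound $\abs{b_r(r,\ph)}\leqs Mr^2$ used for~\eqref{lu01}), do not follow from It\^o's formula alone. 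The missing step is the paper's Proposition~\ref{prop_css}: one must exhibit a further change of variables $y=r-\sigma^2[\Delta_-(\ph)(r-1)+\Delta_+(\ph)(r+1)]$ with \emph{periodic} shift functions $\Delta_\pm(\ph)$ such that the full drift for $y$ vanishes identically at $y=\pm1$; only then do the deterministic orbits sit at $r=\pm1+\Order{\sigma^2}$ while the drift has the stated normal form. This is not a soft observation: because the new change of variables feeds its own second derivatives back through It\^o's formula multiplied by $\sigma^2$, the equations determining $\Delta_\pm$ are a singularly perturbed ODE system, and the existence of the required periodic solution is obtained in the paper via a slow--fast reduction and Fenichel's invariant-manifold theorem, followed by a regular perturbation argument. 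Your proposal neither formulates this equation nor addresses the periodicity of the correction, so as written it does not establish the proposition in the form in which it is used in the rest of the paper.
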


We give the proof in Section~\ref{sec_cs}. We emphasize that after performing this change of coordinates, the stable and
unstable orbit are not located exactly in $r=\pm1$, but are slightly shifted
by an amount of order $\sigma^2$, owing to second-order terms in It\^o's
formula. 

\begin{remark}
\label{remark_coordinates} 
The system of coordinates $(r,\ph)$ is not unique. However, it is characterised
by the fact that the drift term near the periodic orbits is as simple as
possible. Indeed, $f_\ph$ is constant on each periodic orbit (equal-time
parametrisation), and $f_r$ does not depend on $\ph$ to linear order near the
orbits. These properties will be preserved if we apply shifts to $\ph$ (which
may be different on the two periodic orbits), and if we locally scale the
radial variable $r$. The construction of the change of variables shows that its
nonlinear part interpolating between the the orbits is quite arbitrary, but we
will see that this does not affect the results to leading order. 

It would be possible to further simplify the diffusion terms on the periodic
orbits $g_r(\pm 1,\ph)$, preserving the same structure of the equations, by
combining $\ph$-dependent transformations which are linear near the orbits with
a random time change (see  Section~\ref{ssec_resmr}). However this would
introduce other technical difficulties that we want to avoid. 
\end{remark}

The question we are interested in is the following: Assume the system starts with some
initial condition $(r_0,\ph_0)=(r_0,0)$ close to the stable periodic orbit. What is the
distribution of the first-hitting location of the unstable orbit? We define the first-hitting time of (a $\sigma^2$-neighbourhood of) the unstable orbit by
\begin{equation}
 \label{exit_times}
 \tau = \inf\bigsetsuch{t>0}{r_t = 1} \;,
\end{equation} 
 so that the random variable $\ph_\tau$ gives the first-exit location.
 Note that we consider $\ph$ as belonging to $\R_+$ instead of the circle
$\R/\Z$, which means that we keep track of the number of rotations around the
periodic orbits. 


\subsection{Heuristics 1: Large deviations}
\label{ssec_resld}

A first key ingredient to the understanding of the distribution of exit
locations is the theory of large deviations, which has been developed in the
context of SDEs by Freidlin and Wentzell~\cite{FW}. The theory tells us that
for a set $\Gamma$ of paths $\gamma:[0,T]\to\R^2$, one has 
\begin{equation}
\label{ld5}
-\inf_{\Gamma^\circ}I 
\leqs \liminf_{\sigma\to0} \sigma^2\log\bigprob{(z_{t})_{t\in[0,T]}\in\Gamma}
\leqs \limsup_{\sigma\to0} \sigma^2\log\bigprob{(z_{t})_{t\in[0,T]}\in\Gamma}
\leqs -\inf_{\overline\Gamma}I\;,
\end{equation}
where the \emph{rate function}\/ $I=I_{[0,T]}:\cC^0([0,T],\R^2)\to\R_+$ is given
by 
\begin{equation}
\label{ld6}
I(\gamma) = 
\begin{cases}
\dfrac12 \displaystyle\int_0^T \transpose{(\dot\gamma_s - f(\gamma_s))}
D(\gamma_s)^{-1} (\dot\gamma_s - f(\gamma_s)) \,\6s
& \text{if $\gamma\in H^1$,}\\
+\infty
& \text{otherwise,}
\end{cases}
\end{equation}
with $D(z)=g(z)\transpose{g(z)}$ (the \emph{diffusion matrix}, with components
$D_{rr}, D_{r\ph}=D_{\ph r}, D_{\ph\ph}$).
Roughly speaking, Equation~\eqref{ld5} tells us that
\begin{equation}
 \label{ld7a}
 \bigprob{(z_{t})_{t\in[0,T]}\in\Gamma} \simeq \e^{-\inf_\Gamma I/\sigma^2}
 \quad \ \text{or, symbolically,}\quad\ 
 \bigprob{(z_{t})_{t\in[0,T]}=\gamma} \simeq \e^{-I(\gamma)/\sigma^2}\;.
\end{equation} 
For deterministic solutions, we have $\dot\gamma=f(\gamma)$ and $I(\gamma)=0$, so
that~\eqref{ld7a} does not yield useful information. However, for paths
$\gamma$ with $I(\gamma)>0$,~\eqref{ld7a} tells us how unlikely $\gamma$ is. 

The minimisers of $I$ obey Euler--Lagrange equations, which are equivalent
to Hamilton equations generated by the Hamiltonian 
\begin{equation}
\label{ld7}
H(\gamma,\psi) = \frac12 \transpose\psi D(\gamma) \psi +
\transpose{f(\gamma)}\psi\;,
\end{equation} 
where $\psi = D(\gamma)^{-1} (\dot\gamma - f(\gamma))$ is the moment
conjugated to $\gamma$. The rate function thus takes the form 
\begin{equation}
\label{ld8}
I(\gamma) = \dfrac12 \int_0^T \transpose{\psi_s} D(\gamma_s) \psi_s
\,\6s\;.
\end{equation}
Writing $\transpose{\psi}=(p_r,p_\ph)$, the Hamilton equations associated
with~\eqref{ld7}
read 
\begin{equation}
\label{ld9}
\begin{split}
\dot r &= f_r(r,\ph) + D_{rr}(r,\ph) p_r + D_{r\ph}(r,\ph) p_\ph\;, \\
\dot \ph &= f_\ph(r,\ph) + D_{r\ph}(r,\ph) p_r + D_{\ph\ph}(r,\ph) p_\ph\;, \\
\dot p_r &= - \sdpar{f_r}r(r,\ph) p_r - \sdpar{f_\ph}r(r,\ph) p_\ph 
- \frac12 \sum_{ij\in\set{r,\ph}} \sdpar{D_{ij}}r (r,\ph) p_i p_j\;, \\
\dot p_\ph &= - \sdpar{f_r}\ph(r,\ph) p_r - \sdpar{f_\ph}\ph(r,\ph) p_\ph 
- \frac12 \sum_{ij\in\set{r,\ph}} \sdpar{D_{ij}}\ph (r,\ph) p_i p_j\;, \\
\end{split}
\end{equation}
We can immediately note the following points:
\begin{itemiz}
\item	the plane $p_r=p_\ph=0$ is invariant, it corresponds to the
deterministic dynamics;
\item	there are two periodic orbits, given by $p_r=p_\ph=0$ and $r=\pm1$,
which are, of course, the original periodic orbits of the deterministic
system;
\item	$\dot \ph$ is positive, bounded away from zero, in a neighbourhood of
the deterministic manifold. 
\end{itemiz}

The Hamiltonian being a constant of the motion, the four-dimensional phase
space is foliated in three-dimensional invariant manifolds, which can be
labelled by the value of $H$. Since $\sdpar{H}{p_r}=\dot r$ is positive near
the deterministic manifold, one can express $p_\ph$ as a function of $H$, $r$,
$\ph$ and $p_r$, and thus describe the dynamics on each invariant manifold
by an effective three-dimensional equation for $(r,\ph,p_r)$. It is
furthermore possible to use $\ph$ as new time, which yields a two-dimensional,
non-autonomous equation.\footnote{The associated Hamiltonian is the function
$P_\ph(r,p_r,H,\ph)$ obtained by expressing $p_\ph$ as a function of the other
variables.}

\begin{figure}
\centerline{\includegraphics*[clip=true,height=70mm]{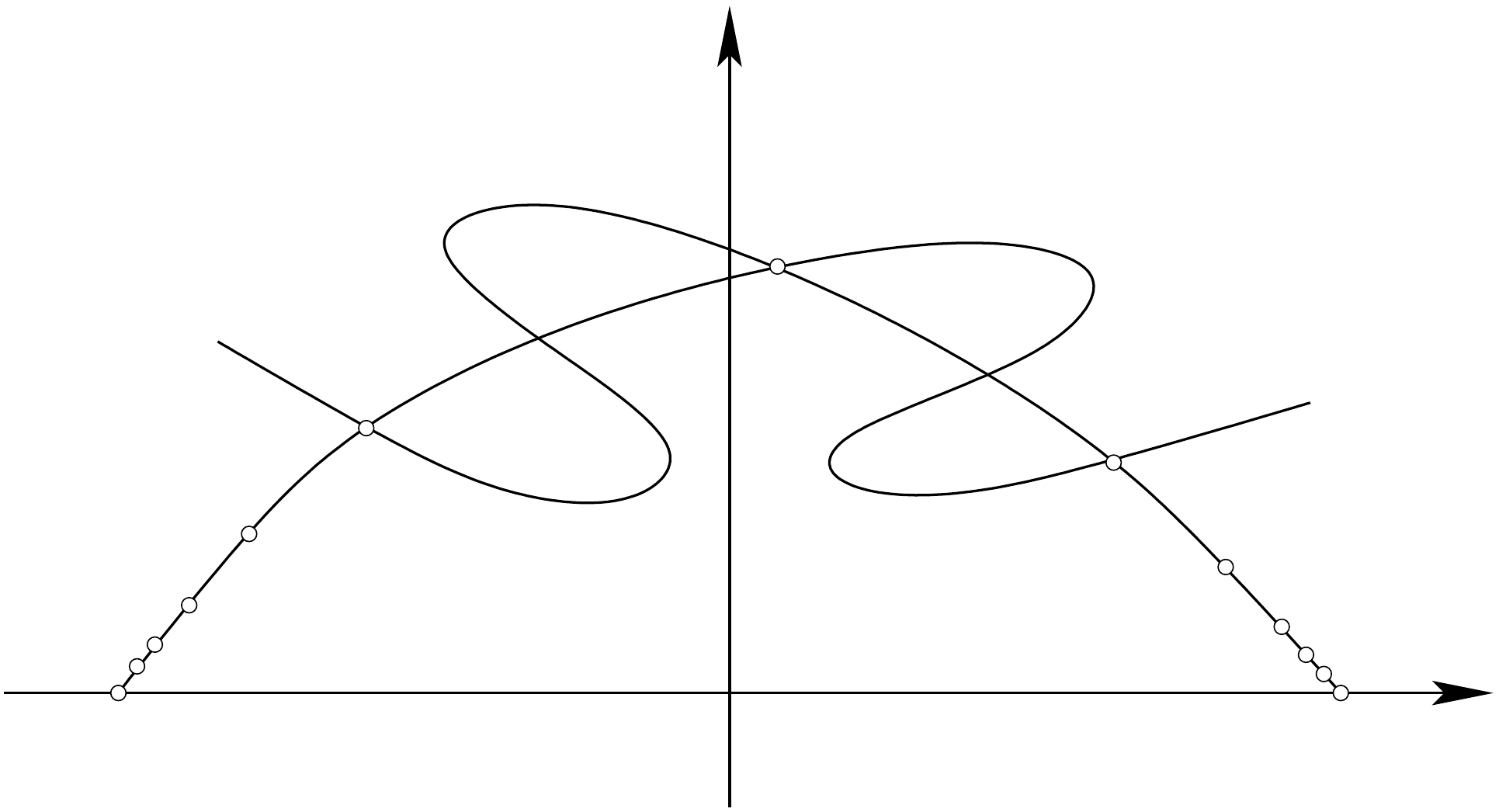}}
 \figtext{
	\writefig	13.2	1.1	$r$
	\writefig	12.4	1.1	$1$
	\writefig	1.6	1.1	$-1$
	\writefig	6.8	6.6	$p_r$
	\writefig	7.4	5.5	$z^*_0$
	\writefig	10.3	3.8	$z^*_1$
	\writefig	11.5	2.8	$z^*_2$
	\writefig	12.0	2.2	$z^*_3$
	\writefig	3.8	4.1	$z^*_{-1}$
	\writefig	2.4	3.0	$z^*_{-2}$
	\writefig	1.8	2.3	$z^*_{-3}$
	\writefig	8.8	5.6	$\cW^{\text{u}}_-$
	\writefig	5.3	5.95	$\cW^{\text{s}}_+$
	\writefig	8.8	1.65	$\cW^{\text{u}}_+$
	\writefig	5.1	1.65	$\cW^{\text{s}}_-$
 }
 \caption[]
 {Poincar\'e section of the Hamiltonian flow associated with the
large-deviation rate function. The stable periodic orbit is located in
$(-1,0)$, the unstable one in $(1,0)$. We assume that the unstable manifold
$\cW^{\text{u}}_-$ of $(-1,0)$ intersects the stable manifold
$\cW^{\text{s}}_+$ of $(1,0)$ transversally. The intersections of both
manifolds define a heteroclinic orbit $\set{z^*_k}_{-\infty<k<\infty}$ which
corresponds to the minimiser of the rate function. }
\label{fig_heteroclinic}
\end{figure}

The linearisation of the system around the periodic orbits is given by 
\begin{equation}
\label{ld10}
\dtot{}{\ph}
\begin{pmatrix}
r \\ p_r
\end{pmatrix}
=
\begin{pmatrix}
\pm\lambda_\pm T_\pm  & D_{rr}(\pm1,\ph) \\
0 & \mp\lambda_\pm  T_\pm
\end{pmatrix}
\begin{pmatrix}
r \\ p_r
\end{pmatrix}
\;.
\end{equation}
The characteristic exponents of the periodic orbit in $r=1$ are thus $\pm
T_+\lambda_+$, and those of the periodic orbit in $r=-1$ are $\pm
T_-\lambda_-$. The Poincar\'e section at $\ph=0$ will thus have hyperbolic
fixed points at $(r,p_r)=(\pm1,0)$.  

Consider now the event $\Gamma$ that the
stochastic system, starting on the stable orbit at $\ph=0$, hits the unstable
orbit for the first time near $\ph=s$. The probability of $\Gamma$ will be
determined by the infimum of the rate function $I$ over all paths connecting
$(r,\ph)=(-1,0)$ to $(r,\ph)=(1,s)$. Note however that if $t>s$, we can connect
$(1,s)$ to $(1,t)$ for free in terms of the rate function $I$ by following the deterministic dynamics along the
unstable orbit. We conclude that \emph{on the level of large deviations, all
exit points on the unstable orbit are equally likely}. 

This does not mean, however, that all paths connecting the stable and unstable
orbits are optimal. In fact, it turns out that the infimum of the rate function
is reached on a heteroclinic orbit connecting the orbits in infinite time. It
is possible to connect the orbits in finite time, at the cost of increasing the
rate function. In what follows, we will make the following
simplifying assumption.

\begin{assump}
\label{assump_heteroclinic} 
In the Poincar\'e section for $H=0$, the unstable manifold $\cW^{\text{u}}_-$ of
$(-1,0)$ intersects the stable manifold $\cW^{\text{s}}_+$ of $(1,0)$
transversally (\figref{fig_heteroclinic}). Let $\gamma_\infty$ denote the
heteroclinic orbit meeting the Poincar\'e section at the set
$\set{z^*_k}_{-\infty<k<\infty}$ of intersections of the manifolds. Then
$\gamma_\infty$ minimises the rate function over all paths connecting the two
periodic orbits, and this minimiser is unique (up to translations
$\ph\mapsto\ph+1$).  
\end{assump}

This assumption obviously fails to hold if the system is perfectly rotation
symmetric, because then the two manifolds do not intersect transversally but are
in fact identical.  The assumption is likely to be true \emph{generically} for
small-amplitude perturbations of $\ph$-independent systems (cf.\ Melnikov's
method), for large periods $T_\pm$ (adiabatic limit) and for small periods
(averaging regime), but may not hold in general. See in
particular~\cite{GrahamTel84,GrahamTel85,MS1} for discussions of possible
complications. 

It will turn out in our analysis that the probability of crossing the unstable
orbit near a sufficiently large finite value of $\ph$ will be determined by a
finite number $n=\intpart{\ph}$ of translates of the minimising orbit. 


\subsection{Heuristics 2: Random Poincar\'e maps}
\label{ssec_resrp}

The second key ingredient of our analysis are Markov chains describing
Poincar\'e maps of the stochastic system. Choose an initial condition
$(R_0,0)$, and consider the value $R_1=r_{\tau_1}$ of $r$ at the time 
\begin{equation}
 \label{resrp1}
 \tau_1 = \inf\setsuch{t>0}{\ph_t=1}\;,
\end{equation} 
when the sample path first reaches $\ph=1$ (\figref{fig_Markov}).
Since we are interested in the first-passage time through the unstable orbit, we
declare that whenever the sample path $(r_t,\ph_t)$ reaches $r=1$ before
$\ph=1$, then $R_1$ has reached a cemetery state $\partial$, which it never
leaves again. Successively, we define $R_n = r_{\tau_n}$, 
where $\tau_n = \inf\setsuch{t>0}{\ph_t=n}$, 
$n\in\N$.

\begin{figure}
\centerline{\includegraphics*[clip=true,height=60mm]{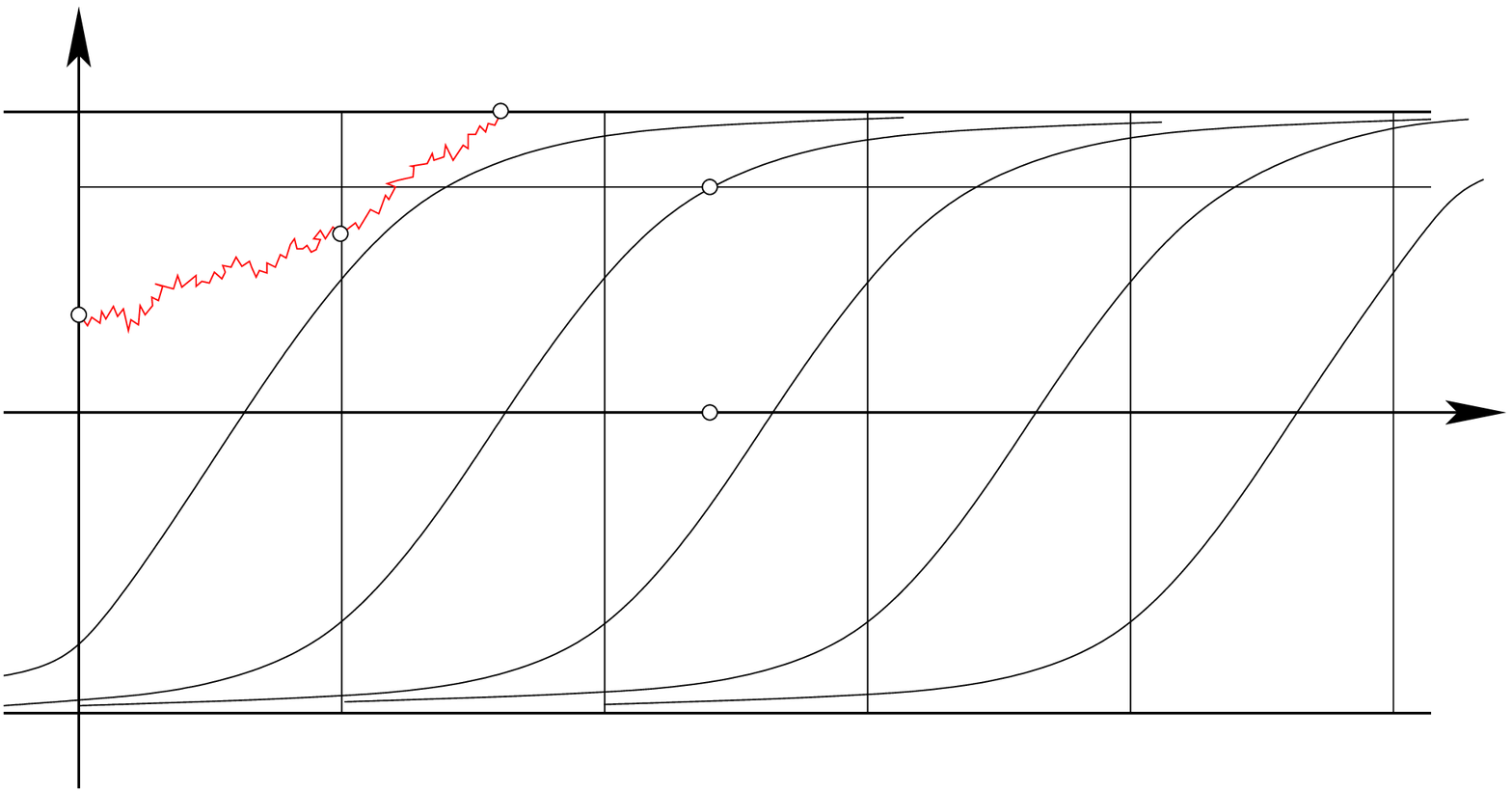}}
 \figtext{
	\writefig	12.4	3.5	$\ph$
	\writefig	1.9	5.9	$r$
	\writefig	1.3	5.0	$1-\delta$
	\writefig	1.7	4.0	$R_0$
	\writefig	3.7	4.8	$R_1$
	\writefig	4.0	3.05	$1$
	\writefig	6.9	3.5	$s^*$
	\writefig	5.1	2.55	$\gamma_\infty$
	\writefig	5.2	5.9	$\ph_\tau$
 }
 \caption[]
 {The optimal path $\gamma_\infty$ minimising the rate function, and its
translates. We define a random Poincar\'e map, giving the location $R_1$ of the
first crossing of the line $\ph=1$ of a path starting in $r=R_0$ and $\ph=0$.}
\label{fig_Markov}
\end{figure}

By periodicity of the system in $\ph$ and the strong Markov property, the
sequence $(R_0,R_1,\dots)$ forms a Markov chain, with kernel $K$, that is, 
for a Borel set $A$, 
\begin{equation}
 \label{resrp2}
 \bigprobin{R_n}{R_{n+1}\in A} = K(R_n,A) = \int_A K(R_n,\6y)
 \qquad
 \text{for all $n\geqs0$\;,}
\end{equation} 
where $ \bigprobin{x}{R_{n }\in A}$ denotes the probability that the Markov chain, starting in $x$, is in $A$ at time $n$.

Results on harmonic measures~\cite{BenArous_Kusuoka_Stroock_1984} imply that
$K(x,\6y)$ actually has a density $k(x,y)$ with respect to Lebesgue measure (see
also \cite{Dahlberg1977,JerisonKenig82,Cranston_Zhao_87} for related results).
Thus the density of $R_n$ evolves according to an integral operator with kernel
$k$. Such operators have been studied, among others, by
Fredholm~\cite{Fredholm_1903}, Jentzsch~\cite{Jentzsch1912} and
Birkhoff~\cite{Birkhoff1957}. In particular, we know that $k$ has a discrete set
of eigenvalues $\lambda_0, \lambda_1, \dots$ of finite multiplicity, where
$\lambda_0$ is simple, real, positive, and larger than the modules of all other
eigenvalues. It is called the~\emph{principal eigenvalue} of the Markov chain.
In our case, we have $\lambda_0<1$ due to the killing at the unstable orbit. 

Fredholm theory yields a decomposition\footnote{If $\lambda_1$ has multiplicity
$m>1$, the second term in~\eqref{resrl3} has to be replaced by a sum with $m$
terms.}
\begin{equation}
 \label{resrl3}
 k(x,y) = \lambda_0 h_0(x) h_0^*(y) + \lambda_1 h_1(x) h_1^*(y) + \dots
\end{equation} 
where the $h_i$ and $h^*_i$ are right and left orthonormal eigenfunctions of the
integral operator. It is known that $h_0$ and $h_0^*$ are positive and
real-valued~\cite{Jentzsch1912}. It follows that 
\begin{equation}
 \label{resrp4}
 \bigprobin{R_0}{R_n\in A} \bydef K_n(R_0,A) 
 = \lambda_0^n h_0(R_0) \int_A h_0^*(y)\6y 
  \biggbrak{1 + \biggOrder{\biggpar{\frac{\abs{\lambda_1}}{\lambda_0}}^n}}\;.
\end{equation} 
Thus the \emph{spectral gap\/}
$\lambda_0 - \abs{\lambda_1}$ plays an important role in the convergence of the distribution of $R_{n}$.
For times $n$ satisfying
$n\gg (\log(\lambda_0/\abs{\lambda_1}))^{-1}$, the distribution of $R_n$ will
have a density 
proportional to $h^*_0$. More precisely, if 
\begin{equation}
 \label{resrp5}
 \pi_0(\6x) = \frac{h^*_0(x)\6x}{\int h^*_0(y)\6y}
\end{equation} 
is the so-called \defwd{quasistationary distribution (QSD)\footnote{See for
instance~\cite{Yaglom56,Seneta_VereJones_1966}. A general bibliography on QSDs
by Phil Pollett is available at 
{\tt http://www.maths.uq.edu.au/$\sim$pkp/papers/qsds/}.}},
then the asymptotic distribution of the process $R_n$, conditioned on survival,
will be $\pi_0$, while the survival probability decays like $\lambda_0^n$. 

Furthermore, the (sub-)probability density of the first-exit location $\ph_\tau$ at
$n+s$, with $n\in \N$ and $s\in[0,1)$, can be written as
\begin{equation}
 \label{resrp6}
 \int K_{n}(R_0,\6y)\bigprobin{y}{\ph_\tau\in\6s}
 = \lambda_0^n h_0(R_0) \int h^*_0(y) \bigprobin{y}{\ph_\tau\in\6s} \6y 
 \biggbrak{1 + \biggOrder{\biggpar{\frac{\abs{\lambda_1}}{\lambda_0}}^n}}\;.
\end{equation} 
This shows that the distribution of the exit location is asymptotically equal to a periodically modulated
 exponential distribution. Note that the integral appearing in~\eqref{resrp6} is proportional to the expectation of $\ph_\tau$ when starting in the
quasistationary distribution. 

In order to combine the ideas based on Markov chains and on large deviations,
we will rely on the approach first used in~\cite{BG7}, and decompose the dynamics into two
subchains, the first one representing the dynamics away from the unstable orbit, and the second one representing the dynamics near the unstable orbit. We consider:
\begin{enum}
\item	A chain for the process killed upon reaching, at time $\tau_-$, a level
$1-\delta$ below the unstable periodic orbit. We denote its kernel $\Ks$. By
Assumption~\ref{assump_heteroclinic}, the first-hitting location $\ph_{\tau_-}$
will be concentrated near places $s^*+n$ where a translate
$\gamma_{\infty}(\cdot+n)$ of the minimiser
$\gamma_\infty$ crosses the level $1-\delta$. We will establish a spectral-gap
estimate for $\Ks$ (see Theorem~\ref{thm_spectralgap}), showing that
$\ph_{\tau_-}$ indeed follows a periodically modulated exponential of the form 
\begin{equation}
 \label{resrp6a}
 \bigprobin{0}{\ph_{\tau_-}\in[\ph_1,\ph_1+\Delta]}
 \simeq (\lOs)^{\ph_1} \e^{-J(\ph_1)/\sigma^2}\;,
\end{equation} 
where $J$ is periodic and minimal in points of the form $s^*+n$. 

\item 	A chain for the process killed upon reaching either the unstable
periodic orbit at $r=1$, or a level $1-2\delta$, with kernel $\Ku$. We show in
Theorem~\ref{thm_lOu} that its principal eigenvalue is of the form 
\begin{equation}
 \label{resrp7}
 \lOu = \e^{-2\lambda_+T_+}(1+\Order{\delta})\;.
\end{equation} 
Together with a large-deviation estimate, this yields a rather precise
description of the distribution of $\ph_\tau$, given the value of
$\ph_{\tau_-}$, of the form 
\begin{equation}
 \label{resrp7a} 
 \bigprobin{\ph_{\tau_-}}{\ph_{\tau}\in[\ph,\ph+\Delta]}
 \simeq \e^{-2\lambda_+T_+(\ph-\ph_{\tau_-})} 
 \exp\Bigset{-\frac{1}{\sigma^2} \Bigbrak{I_\infty +
\Order{\e^{-2\lambda_+T_+(\ph-\ph_{\tau_-})}}}}\;,
\end{equation} 
where $I_\infty$ is again related to the rate function, and the term 
$\Order{\e^{-2\lambda_+T_+(\ph-\ph_{\tau_-})}}$ can be computed explicitly to
leading order. The double-exponential dependence of~\eqref{resrp7a} on 
$2\lambda_+T_+(\ph-\ph_{\tau_-})$ is in fact what characterises the Gumbel
distribution. 
\end{enum}

By combining the two above steps, we obtain that the first-exit distribution is
given by a sum of shifted Gumbel distributions, in which each term is
associated with a translate of the optimal path $\gamma_\infty$. 


\subsection{Main result: Cycling}
\label{ssec_resmr}

In order to formulate the main result, we introduce the notation 
\begin{equation}
 \label{resmr01}
 \hper(\ph) = \frac{\e^{2\lambda_+T_+\ph}}{1-\e^{-2\lambda_+T_+}}
 \int_\ph^{\ph_+1} \e^{-2\lambda_+T_+ u} D_{rr}(1,u)\6u
\end{equation} 
for the periodic solution of the equation 
\begin{equation}
 \label{equation_for_hper} 
 \dtot{h}{\ph} = 2\lambda_+T_+ h -
D_{rr}(1,\ph)\;,
 \end{equation} 
where
\begin{equation}
 \label{resmr02}
 D_{rr}(1,\ph) = g_r(1,\ph) \transpose{g_r(1,\ph)}
\end{equation} 
measures the strength of diffusion in the direction orthogonal to the periodic
orbit. Recall that $\lambda_+T_+$ measures the growth rate per period near the
unstable periodic orbit, which is independent of the coordinate system. The
periodic function 
\begin{equation}
 \label{resmr02A}
 \theta'(\ph) = \frac{D_{rr}(1,\ph)}{2\hper(\ph)}
\end{equation} 
will provide a natural parametrisation of the orbit, in the following sense. 
Consider the 
linear approximation of the equation near the unstable orbit (assuming
$T_+=1$ for simplicity) given by 
\begin{align}
\nonumber
\6r_t &= \lambda_+ (r_t-1) \6t + \sigma g_r(1,\ph_t) \6W_t\;, \\
\6\ph_t &= \6t\;.
\label{resms10A} 
\end{align}
Then the affine change of variables $r-1=\sqrt{2\lambda_+\hper(\ph)}\,y$,
followed by the time change $s = (\theta'(\ph_t)/\lambda_+)t$
transforms~\eqref{resms10A} into 
\begin{align}
\nonumber
\6y_s &= \lambda_+ y_s \6s + \sigma \tilde g(\psi_s) \6W_s\;,
\quad \text{with }
\tilde g(\psi_s) = \frac{g_r(1,\ph_t)}{\sqrt{D_{rr}(\ph_t)}}\\
\6\psi_s &= \6s\;,
\label{resms10B} 
\end{align}
where we set $\psi=\lambda_+\theta(\ph)$. 
The new diffusion coefficient satisfies $\widetilde D_{rr}(\psi) = \tilde
g(\psi) \transpose{\tilde g(\psi)}=1$, and thus $\hpertilde(\psi)=1/2\lambda_+$
is constant. In particular if $W_t$ were one-dimensional we would have $\tilde
g(\psi)=1$. In other words, any primitive $\theta(\ph)$ of $\theta'(\ph)$ can be
thought of as a parametrisation of the unstable orbit in which the effective
transversal noise intensity is constant. 

\begin{figure}
\centerline{\includegraphics*[clip=true,height=60mm]{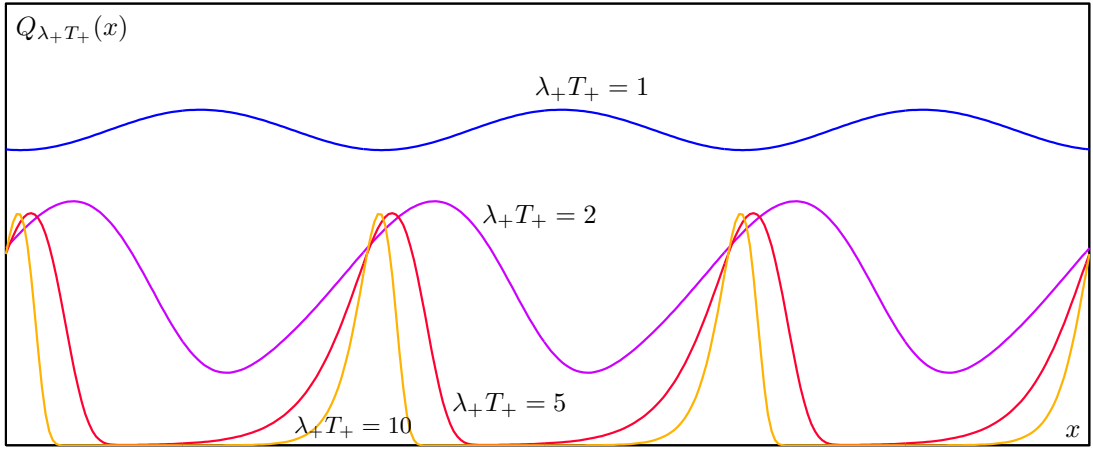}}
 \figtext{
	\writefig	14.25	0.65	$x$
	\writefig	0.3	6.0	$Q_{\lambda_+T_+}(x)$
	\writefig	7.2	5.2	$\lambda_+T_+=1$
	\writefig	6.5	3.5	$\lambda_+T_+=2$
	\writefig	6.1	1.0	$\lambda_+T_+=5$
	\writefig	4.0	0.7	{\footnotesize $\lambda_+T_+=10$}
 }
 \caption[]
 {The cycling profile $x\mapsto Q_{\lambda_+T_+}(x)$ for different values of the
parameter $\lambda_+T_+$, shown for $x\in[0,3]$.}
\label{fig_cycling}
\end{figure}

\begin{theorem}[Main result]
\label{main_theorem} 
There exist $\beta, c>0$ such that 
for any sufficiently small $\delta, \Delta>0$, there exists $\sigma_0>0$
such that the following holds: For any $r_0$ sufficiently close to $-1$ and
$\sigma< \sigma_{0}$, 
\begin{align}
\nonumber
\biggprobin{r_0,0}{\frac{\theta(\ph_\tau)}{\lambda_+T_+} \in [t,t+\Delta]}
={}& \Delta 
 C_0(\sigma) (\lambda_0)^t
 Q_{\lambda_+T_+}\biggpar{\frac{\abs{\log\sigma}
}{\lambda_+ T_+} - t + \Order{\delta}} \\
 &{}\times \biggbrak{1 + \BigOrder{\e^{-c\ph/\abs{\log\sigma}}} + 
 \Order{\delta\abs{\log\delta}} + \Order{\Delta^\beta}}\;,
 \label{resmr03}
\end{align} 
where we use the following notations:
\begin{itemiz}
\item 	$Q_{\lambda_+T_+}(x)$ is periodic with period $1$ and given be the
periodicised Gumbel
distribution 
\begin{equation}
 \label{resmr04a} 
 Q_{\lambda_+T_+}(x) = \sum_{n=-\infty}^\infty A\bigpar{\lambda_+T_+(n-x)}\;, 
\end{equation} 
where
\begin{equation}
 \label{resmr04aa} 
 A(x) = \exp\Bigset{-2x - \frac12 \e^{-2x}}
\end{equation} 
is the density of a type-$1$ Gumbel distribution with mode $-\log 2/2$ and
scale parameter $1/2$ (and thus variance $\pi^2/24$). 

\item 	$\theta(\ph)$ is the particular primitive\footnote{The differential
equation~\eqref{equation_for_hper} defining $\hper$
implies that indeed $\theta'(\ph)=D_{rr}(\ph)/(2\hper(\ph))$.}
of $\theta'(\ph)$ given by 
\begin{equation}
 \label{resmr04b}
 \theta(\ph) = \lambda_+ T_+ \ph 
 -\frac12 \log \biggbrak{\frac12 \delta^2 
\frac{\hper(\ph)}{\hper(s^*)^2} }\;,
\end{equation} 
where $s^*$ denotes the value of $\ph$ where the optimal path $\gamma_\infty$
crosses the level $1-\delta$. 
It satisfies $\theta(\ph+1)=\theta(\ph) + \lambda_+T_+$. 

\item 	$\lambda_0$ is the principal eigenvalue of the Markov chain, and
satisfies 
\begin{equation}
 \label{resmr04c}
 \lambda_0 = 1 - \e^{-H/\sigma^2}
\end{equation} 
where $H>0$ is close to the value of the rate function $I(\gamma_\infty)$.

\item 	The normalising constant $C_0(\sigma)$ is of order $\e^{-H/\sigma^2}$. 
\end{itemiz}
\end{theorem}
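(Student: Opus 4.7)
The plan is to combine the two heuristic ingredients by splitting the first-passage event at an auxiliary level $r = 1-\delta$ slightly below the unstable orbit, and then gluing the two contributions via the strong Markov property. Working in the polar-type coordinates $(r,\ph)$ from the Proposition, I introduce the stopping time $\tau_- = \inf\set{t>0 : r_t = 1-\delta}$, so that on the event $\set{\tau<\infty}$ we have $\ph_\tau = \ph_{\tau_-} + (\ph_\tau - \ph_{\tau_-})$. By the strong Markov property the joint law decomposes as
\begin{equation*}
 \bigprobin{r_0,0}{\ph_\tau \in [\ph,\ph+\Delta]}
 = \int \bigprobin{r_0,0}{\ph_{\tau_-}\in\6u}\;
       \bigprobin{(1-\delta,u)}{\ph_\tau \in [\ph,\ph+\Delta]}\;,
\end{equation*}
and it suffices to estimate each factor separately and then convolve.

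For the stable-side factor I would study the Markov chain with kernel $\Ks$, describing the Poincar\'e map at integer values of $\ph$ for the process killed upon reaching $r=1-\delta$. Combining the Fredholm-type spectral decomposition \eqref{resrl3} with a spectral gap $\abs{\loneu}/\lOs < 1$ (to be proved in Section~\ref{sec_mc} via irreducibility and Doeblin-type arguments on the compact transversal, using the sample-path estimates of Section~\ref{sec_sp}), I obtain~\eqref{resrp6a}: the density of $\ph_{\tau_-}$ is asymptotically $(\lOs)^{\ph_1}$ times a periodic profile which, by the large-deviation analysis of Section~\ref{sec_ld} under Assumption~\ref{assump_heteroclinic}, concentrates on the translates $s^*+n$ of the unique crossing point of $\gamma_\infty$ with the level $r=1-\delta$. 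Identifying $\lOs$ with $\lambda_0$ and tracking the normalisation furnishes the prefactor $C_0(\sigma)(\lambda_0)^t$ in~\eqref{resmr03}.

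For the unstable-side factor I would linearise the SDE near $r=1$ to obtain the Ornstein--Uhlenbeck-type equation~\eqref{resms10A} with $\ph$ as time. Solving this linear SDE in closed form produces a Gaussian density for $r_t - 1$ with transverse variance
\begin{equation*}
 \sigma^2 \int_{\ph_{\tau_-}}^{\ph} \e^{2\lambda_+T_+(\ph-u)} D_{rr}(1,u)\,\6u
 = \sigma^2 \e^{2\lambda_+T_+(\ph-\ph_{\tau_-})} \hper(\ph)\bigbrak{1+\Order{\delta}}\;,
\end{equation*}
where $\hper$ is as in~\eqref{resmr01}. A Laplace-type computation for the first-passage density through $r=1$ of the (time-changed) drifted OU process, in the spirit of~\cite{BG7}, then produces the double-exponential form~\eqref{resrp7a}; the principal-eigenvalue estimate $\lOu = \e^{-2\lambda_+T_+}(1+\Order{\delta})$ controls the remainder. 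The affine change of variable $r-1 = \sqrt{2\lambda_+\hper(\ph)}\,y$ together with the time change $s = \theta'(\ph) t / \lambda_+$ normalises the transverse noise to unit intensity, and $\theta(\ph)$ in~\eqref{resmr04b} is precisely the primitive that absorbs the $\ph$-dependence of $\hper$ into the argument of the resulting Gumbel density $A$ of~\eqref{resmr04aa}.

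Finally, I would convolve the two factors. The modulated-exponential law of $\ph_{\tau_-}$, supported near the points $s^*+n$, combines with the shifted Gumbel law emanating from each such point to produce the periodicised sum~\eqref{resmr04a}: the geometric factor $(\lOs)^{\ph_1}$ on the stable side balances the $\e^{-2\lambda_+T_+(\ph-\ph_{\tau_-})}$ factor on the unstable side, while the $\abs{\log\sigma}$ shift in the argument of $Q_{\lambda_+T_+}$ arises from the fact that the double-exponential in~\eqref{resrp7a} is concentrated at $\ph-\ph_{\tau_-} = \abs{\log\sigma}/(\lambda_+T_+) + \Order{1}$. The error terms $\Order{\e^{-c\ph/\abs{\log\sigma}}}$, $\Order{\delta\abs{\log\delta}}$ and $\Order{\Delta^\beta}$ come respectively from the spectral gap of $\Ks$, the linearisation error at level $1-\delta$, and the regularity of the Laplace estimates at scale $\Delta$. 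The main obstacle, I expect, is the spectral-gap estimate for $\Ks$ with sufficiently sharp control of the principal eigenfunction concentrated at the translates of $s^*$: this requires matching upper and lower large-deviation bounds along $\gamma_\infty$ together with uniform Gaussian estimates on the transition density of the killed process coming from harmonic-measure theory, with constants independent of $\sigma$ and $\delta$. A secondary delicate point is to verify that the leading-order result is independent of the non-unique interpolating part of the coordinate change noted in Remark~\ref{remark_coordinates}.
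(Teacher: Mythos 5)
Your overall architecture does match the paper's proof in Section~\ref{sec_el}: the passage is split at the level $1-\delta$ via the strong Markov property, the stable side is handled by the kernel $\Ks$ (spectral gap of Theorem~\ref{thm_spectralgap} plus large-deviation concentration of $\ph_{\tau_-}$ at the translates of $s^*$), and the final answer is the convolution sum of $P_\Delta$ and $Q_\Delta$ producing the periodicised Gumbel with the $\abs{\log\sigma}$ shift; your attribution of the error terms is broadly right. The genuine gap is in your treatment of the unstable-side factor. You propose to obtain both the double-exponential shape \emph{and its prefactor} from a closed-form OU first-passage computation, with nonlinearities dismissed as $\Order{\delta}$ and \lq\lq $\lOu$ controlling the remainder\rq\rq. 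But the large-deviation principle and the linearised Gaussian formula only control the exponent; the statement~\eqref{resmr03} requires the prefactor $D^*(s)\propto D_{rr}(1,s)\e^{-2\lambda_+T_+s}$ with relative error $\Order{\Delta^\beta}$, and requires the profile to be rigidly translated by $\ell\lambda_+T_+$ with an $\ell$-independent shape, uniformly over the $\ell\sim\abs{\log\sigma}/(2\lambda_+T_+)$ windings needed to escape. In the paper this comes from Proposition~\ref{prop_sq2} (hitting distribution started from the QSD $\piOu$), which presupposes a spectral gap for $\Ku$; that gap is not proved directly but extracted in Section~\ref{sec_el} by a contradiction argument comparing the Markov-chain representation~\eqref{el05} with the large-deviation asymptotics~\eqref{el06c}, and the Gaussian picture for the \emph{nonlinear} process over $\Order{\abs{\log\sigma}}$ periods is justified by the sample-path estimates of Sections~\ref{ssec_lu}--\ref{ssec_sq} (Propositions~\ref{prop_lu_phi}--\ref{prop_lu_lowerbound}, Theorem~\ref{thm_lOu}, Proposition~\ref{prop_sq1}). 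Your plan never addresses the relaxation to $\piOu$ or the gap for $\Ku$; you single out the spectral gap for $\Ks$ as the main obstacle, but that is the comparatively routine half (Harnack estimates for harmonic measures plus coupling, Propositions~\ref{prop_positivity} and~\ref{prop_contraction}--\ref{prop_difference_improved}).

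A second, related flaw is your displayed variance identity: in fact $\int_{\ph_{\tau_-}}^{\ph}\e^{2\lambda_+T_+(\ph-u)}D_{rr}(1,u)\,\6u=\e^{2\lambda_+T_+(\ph-\ph_{\tau_-})}\hper(\ph_{\tau_-})-\hper(\ph)$, so the leading term carries $\hper$ at the \emph{entry} phase and the correction is relatively $\Order{\e^{-2\lambda_+T_+(\ph-\ph_{\tau_-})}}$, not $\Order{\delta}$. This is not cosmetic: the Gumbel profile and the parametrisation $\theta$ of~\eqref{resmr04b} arise precisely from that exponentially small correction, which combines $\hper$ at the exit phase with $\hper(s^*)^2$ at the entry phase (cf.\ Proposition~\ref{prop_ldp_nonlinear} and~\eqref{el06b}); conflating the two endpoints would produce a wrong $\ph$-dependence of the cycling profile, i.e.\ a wrong change of coordinates in the main statement. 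Finally, when gluing at $1-\delta$ you must also show that paths oscillating several times between the levels at distance $\delta$ and $2\delta$ from the unstable orbit contribute negligibly (since $\Ku$ is killed at distance $2\delta$); the paper disposes of this with the Laplace-transform argument used in Proposition~\ref{prop_lu_upperbound2}, and your proposal omits it.
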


The proof is given in Section~\ref{sec_el}. We now comment the different terms
in the expression~\eqref{resmr03} in more detail. 

\begin{itemiz}
\item 	{\bf Cycling profile:} The function $Q_{\lambda_+T_+}$ is the announced
universal cycling profile. Relation~\eqref{resmr03} shows that the profile
is translated along the unstable orbit proportionally to $\abs{\log\sigma}$.
The intuition is that this is the time needed for the optimal path
$\gamma_\infty$ to reach a $\sigma$-neighbourhood of the unstable orbit where 
escape becomes likely. For small values of $\lambda_+T_+$, the cycling profile
is rather flat, while it becomes more and more sharply peaked as $\lambda_+T_+$
increases (\figref{fig_cycling}). 

\item 	{\bf Principal eigenvalue:} The principal eigenvalue $\lambda_0$
determines the slow exponential decay of the first-exit distribution. Writing 
$(\lambda_0)^t=\e^{-t\abs{\log\lambda_0}}$, we see that the expected
first-exit location is of order $1/\abs{\log\lambda_0}\simeq\e^{H/\sigma^2}$. 
This \lq\lq time\rq\rq\ plays the same r\^ole as Kramers' time for gradient
systems (see~\cite{Eyring,Kramers} and e.g.~\cite{Berglund_Kramers} for a recent
review of Kramers' law). 
One may obtain sharper bounds on $\lambda_0$ using, for instance, the
Donsker--Varadhan inequality~\cite{DonskerVaradhan76}.

\item 	{\bf Normalisation:} The prefactor $C_0(\sigma)$ can be estimated using
the fact that the first-exit distribution is normalised to $1$. It is of the
order $\abs{\log\lambda_0}\simeq\e^{-H/\sigma^2}$. 

\item 	{\bf Transient behaviour:} The error term
$\Order{\e^{-ct/\abs{\log\sigma}}}$ describes the transient behaviour when not
starting in the quasistationary distribution. If the initial condition is
concentrated near the stable periodic orbit, we expect the first-exit
distribution to be bounded above by the leading term in~\eqref{resmr03} during
the transient phase. 

\item 	{\bf Dependence on a level $\delta$:} While the left-hand side of~\eqref{resmr03} does not depend on $\delta$ and one would like to take the limit $\delta\to0$ on the right-hand side, this would require also to pass to the limit $\sigma\to0$ since the maximal value $\sigma_0$ depends on $\delta$ (as it does depend on $\Delta$).
\end{itemiz}

To illustrate the dependence of the first-passage distribution on the
parameters, we provide two animations, available at \\
{\tt http://www.univ-orleans.fr/mapmo/membres/berglund/simcycling.html}.\\
They show how the distribution changes with noise intensity $\sigma$ (cycling)
and orbit period $T_+$, respectively. In order to show the dependence more
clearly, the chosen parameter ranges exceed in part the domain in which our
results are applicable. 


\subsection{Discussion}
\label{ssec_resdisc}

We now present some consequences of Theorem~\ref{main_theorem} which help to
understand the result. First of all, we may consider the wrapped distribution 
\begin{equation}
 \label{resd01}
 \cW_\Delta(t) = \sum_{n=0}^\infty 
 \biggprobin{r_0,0}{\frac{\theta(\ph_\tau)}{\lambda_+T_+} \in
[n+t,n+t+\Delta]}\;,
\end{equation} 
which describes the first-hitting location of the periodic orbit without
keeping track of the winding number. Then an immediate consequence of
Theorem~\ref{main_theorem} is the following.

\begin{cor}
\label{cor_Wrap}
Under the assumptions of the theorem, we have 
\begin{equation}
 \label{resd02}
 \cW_\Delta(t) = \Delta Q_{\lambda_+T_+}\biggpar{\frac{\abs{\log\sigma}
}{\lambda_+ T_+} - t + \Order{\delta}}
\biggbrak{1 + \Order{\delta\abs{\log\delta}} + \Order{\Delta^\beta}}\;.
\end{equation} 
As a consequence, the following limit result holds:
\begin{equation}
 \label{resd03}
 \lim_{\delta,\Delta\to 0} \lim_{\sigma\to 0}
 \frac{1}{\Delta} \cW_\Delta 
 \biggpar{t + \frac{\abs{\log\sigma}}{\lambda_+T_+}}
 = Q_{\lambda_+T_+}(-t)\;.
\end{equation} 
\end{cor}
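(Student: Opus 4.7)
The plan is to apply Theorem~\ref{main_theorem} term by term to the series defining $\cW_\Delta(t)$, use the periodicity of $Q_{\lambda_+T_+}$ to pull out a single cycling factor, and then evaluate the geometric series in $\lambda_0$, absorbing the prefactor $C_0(\sigma)$ by normalisation.

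First, I would apply Theorem~\ref{main_theorem} with $t$ replaced by $n+t$ for each $n\in\N_0$. Since $Q_{\lambda_+T_+}$ has period $1$, the argument
$|\log\sigma|/(\lambda_+T_+) - (n+t) + \Order{\delta}$ produces the same value of $Q_{\lambda_+T_+}$ as for $n=0$; thus the cycling factor factorises out of the sum. This yields
\begin{equation*}
\cW_\Delta(t) = \Delta\, Q_{\lambda_+T_+}\biggpar{\frac{|\log\sigma|}{\lambda_+T_+}-t+\Order{\delta}} C_0(\sigma) \sum_{n=0}^\infty \lambda_0^{n+t} \bigbrak{1+\cE_n(\sigma,\delta,\Delta)}\;,
\end{equation*}
where $\cE_n$ collects the three error contributions of the theorem; crucially, the transient error $\Order{\e^{-c(n+t)/|\log\sigma|}}$ is now indexed by $n$.

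Second, I would sum the geometric series, $\sum_{n\geqs0}\lambda_0^{n+t} = \lambda_0^t/(1-\lambda_0)$, and use the quantitative information about $C_0(\sigma)$ and $\lambda_0$ from Theorem~\ref{main_theorem}: namely, $\lambda_0 = 1-\e^{-H/\sigma^2}$ so $1-\lambda_0 = \e^{-H/\sigma^2}$, and $C_0(\sigma)$ is of the same order. The constant is fixed by the fact that $\cW_\Delta$ is a genuine probability distribution on the circle; integrating the identity $\int_0^1 Q_{\lambda_+T_+}(x)\,\6x = 1/(\lambda_+T_+)$ (computed by substitution $u=\lambda_+T_+(n-x)$ and the Gumbel identity $\int_{\R}A(u)\,\6u=1$) pins down $C_0(\sigma)\lambda_0^t/(1-\lambda_0)=1+\Order{\delta|\log\delta|}+\Order{\Delta^\beta}$ uniformly for $t\in[0,1]$, since $\lambda_0^t\to1$ at speed $\e^{-H/\sigma^2}$.

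Third, I would handle the error terms. The $\Order{\delta|\log\delta|}$ and $\Order{\Delta^\beta}$ terms are uniform in $n$ and pull through the sum untouched. The transient term $\sum_{n\geqs 0}\lambda_0^n\Order{\e^{-c(n+t)/|\log\sigma|}}$ must be compared to $\sum_n\lambda_0^n$; since $\lambda_0$ is exponentially close to $1$ while $\e^{-c/|\log\sigma|}$ is of order $1-c/|\log\sigma|$, the transient contribution is relatively of order $|\log\sigma|\cdot(1-\lambda_0)=|\log\sigma|\e^{-H/\sigma^2}$, which is much smaller than $\delta|\log\delta|$ for $\sigma$ small enough (uniformly in $t$). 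This establishes \eqref{resd02}. This control of the transient error against the geometric decay is the step I expect to be the most delicate, since the two small parameters ($\sigma$ and $\delta$) compete in a non-trivial way.

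Finally, to obtain the limit \eqref{resd03}, I would substitute $t\mapsto t+|\log\sigma|/(\lambda_+T_+)$ in \eqref{resd02}; the periodicity of $Q_{\lambda_+T_+}$ collapses the shift, yielding $Q_{\lambda_+T_+}(-t+\Order{\delta})$ times an error that vanishes as $\sigma\to 0$ (then $\delta,\Delta\to 0$). Sending $\sigma\to 0$ first with $\delta,\Delta$ fixed kills all $\sigma$-dependent errors, and the subsequent limit $\delta,\Delta\to 0$ gives exactly $Q_{\lambda_+T_+}(-t)$, using continuity of $Q_{\lambda_+T_+}$.
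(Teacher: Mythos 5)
Your route is the intended one: the paper offers no separate proof of the corollary precisely because it is meant to follow by applying Theorem~\ref{main_theorem} at $n+t$, using the period-$1$ property of $Q_{\lambda_+T_+}$ to factor the profile out of the sum, summing the geometric series $\sum_n\lambda_0^{n+t}=\lambda_0^t/(1-\lambda_0)$, and absorbing $C_0(\sigma)\lambda_0^t/(1-\lambda_0)$ into the leading constant. Your comparison of the transient term against the geometric sum is correct ($\sum_n\lambda_0^n\e^{-cn/\abs{\log\sigma}}\leqs\const\abs{\log\sigma}$ versus $\sum_n\lambda_0^n=\e^{H/\sigma^2}$, so the relative contribution is $\Order{\abs{\log\sigma}\e^{-H/\sigma^2}}$), and the substitution-plus-periodicity argument for \eqref{resd03} is fine, since \eqref{resd02} with its error bounds remains valid for $t$ shifted by $\abs{\log\sigma}/(\lambda_+T_+)$ (the extra loss in $\lambda_0^t$ is only $\Order{\abs{\log\sigma}\e^{-H/\sigma^2}}$).

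The one step that does not close as written is the normalisation. From total mass one and your (correct) identity $\int_0^1 Q_{\lambda_+T_+}(x)\,\6x=1/(\lambda_+T_+)$, what gets pinned down is $C_0(\sigma)\lambda_0^t/(1-\lambda_0)=\lambda_+T_+\,[1+\Order{\cdot}]$, not $1+\Order{\cdot}$ as you assert; with that value your own computation would give $\cW_\Delta(t)=\Delta\,\lambda_+T_+\,Q_{\lambda_+T_+}(\cdots)[1+\cdots]$, which differs from \eqref{resd02} by the constant factor $\lambda_+T_+$. So either you must establish $C_0(\sigma)/(1-\lambda_0)=1+\Order{\cdot}$ directly, by tracking the constant assembled in Section~\ref{sec_el} (where $C_0$ arises from \eqref{el22}--\eqref{el25} after the change of variables $\ph\mapsto\theta(\ph)$), rather than by a mass-one argument, or you should state explicitly that normalisation fixes the product only up to the factor $\lambda_+T_+$ — which in fact exposes a constant-factor bookkeeping tension in the statements themselves, since \eqref{resd02} together with total mass one would force $\int_0^1 Q_{\lambda_+T_+}=1$, whereas \eqref{resmr04a} gives $1/(\lambda_+T_+)$. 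A smaller point: the $\Order{\delta}$ inside the argument of $Q_{\lambda_+T_+}$ produced by Theorem~\ref{main_theorem} may a priori depend on $n$; to pull a single factor $Q_{\lambda_+T_+}(\abs{\log\sigma}/(\lambda_+T_+)-t+\Order{\delta})$ out of the sum you should invoke that $Q_{\lambda_+T_+}$ is smooth, periodic and bounded away from zero, so the $n$-dependence is absorbed into a relative error $\Order{\delta}$, compatible with the allowed $\Order{\delta\abs{\log\delta}}$.
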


This asymptotic result stresses that the cycling profile can be recovered in
the zero-noise limit, if the system of coordinates is shifted along the orbit
proportionally to $\abs{\log\sigma}$. 
One could write similar results for the unwrapped first-hitting distribution,
but the transient term $\e^{-ct/\abs{\log\sigma}}$ would require to introduce
an additional shift of the observation window. A simpler statement can be made
when starting in the quasistationary distribution $\pi_0$, namely 
\begin{align}
\nonumber
\biggprobin{\pi_0}{\frac{\theta(\ph_\tau)}{\lambda_+T_+} \in [t,t+\Delta]}
={}& \Delta 
 C_0(\sigma) (\lambda_0)^t
 Q_{\lambda_+T_+}\biggpar{\frac{\abs{\log\sigma}
}{\lambda_+ T_+} - t + \Order{\delta}} \\
 &{}\times \biggbrak{1 + 
 \Order{\delta\abs{\log\delta}} + \Order{\Delta^\beta}}\;,
 \label{resd04}
\end{align} 
and thus
\begin{equation}
 \label{resd05}
 \lim_{\delta,\Delta\to 0} \lim_{\sigma\to 0} \frac{1}{C_0(\sigma)
(\lambda_0)^t}\frac1{\Delta} 
 \biggprobin{\pi_0}{\frac{\theta(\ph_\tau)+\abs{\log\sigma}}{\lambda_+T_+} \in
[t,t+\Delta]}
= Q_{\lambda_+T_+}(-t)\;.
\end{equation} 

We conclude with some remarks on applications and possible improvements and
extensions of Theorem~\ref{main_theorem}. 

\begin{itemiz}
\item 	{\bf Spectral decomposition:} In the proof presented here, we rely
partly on large-deviation estimates, and partly on spectral properties of
random Poincar\'e maps. By obtaining more precise information on the
eigenfunctions and eigenvalues of the Markov chain $\Ku$, one might be able to obtain the
same result without using large deviations. This is the case for the linearised
system (see Proposition~\ref{prop_lu_linear}), for which one can check that the
right eigenfunctions are similar to those of the quantum harmonic oscillator
(Gaussians multiplied by Hermite polynomials). 

\item 	{\bf Residence-time distribution:} Consider the situation where there
is a stable periodic orbit surrounding the unstable one. Then sample paths of
the system switch back and forth between the two stable orbits, in a way
strongly influenced by noise intensity and period of the orbits. The
residence-time distribution near each orbit is related to the above first-exit
distribution~\cite{BG9}, and has applications in the quantification of the
phenomenon of stochastic resonance (see also~\cite[Chapter~4]{BGbook}). 

\item 	{\bf More general geometries:} In a similar spirit, one may ask what
happens if the stable periodic orbit is replaced by a stable equilibrium point,
or some other attractor. We expect the result to be similar in such a
situation, because the presence of the periodic orbit is only felt inasmuch
hitting points of the level $1-\delta$ are concentrated within each period. 

\item 	{\bf Origin of the Gumbel distribution:} The proof shows that the
double-exponential behaviour of the cycling profile results from a combination
of the exponential convergence of the large-deviation rate function to its
asymptotic value and the exponential decay of the QSD near the unstable orbit. 
Still, it would be nice to understand whether there is a link between this exit
problem and extreme-value theory. As mentioned in the introduction, the
authors of~\cite{CerouGuyaderLelievreMalrieu12} obtained that the length of
reactive paths is also governed by a Gumbel distribution, but their proof
relies on Doob's h-transform and the exact solution of the resulting ODE, and
thus does not provide immediate insight into possible connections with
extreme-value theory. 
\end{itemiz}


\section{Coordinate systems}
\label{sec_cs}


\subsection{Deterministic system}
\label{ssec_csd}

We start by constructing polar-like coordinates for the deterministic
ODE~\eqref{csd1}. 

\begin{prop}
\label{prop_csd}
There is an open subset $\cD_1 = (-L,L)\times\fS^1$ of the cylinder, with
$L>1$, and a $\cC^2$-diffeomorphism $h: \cD_1\to\cD_0$ such that~\eqref{csd1}
is equivalent, by the transformation $z=h(r,\ph)$, to the system
\begin{equation}
\label{csd7}
\begin{split}
\dot r &= f_r(r,\ph)\;, \\
\dot \ph &= f_\ph(r,\ph)\;,
\end{split}
\end{equation}
where $f_r, f_\ph: \cD_1\to\R$ satisfy 
\begin{align}
\nonumber
f_r(r,\ph) &= \lambda_+(r-1) + \Order{(r-1)^2} \;,\\
f_\ph(r,\ph) &= \frac1{T_+} + \Order{(r-1)^2}
\label{csd8}
\end{align}
as $r\to1$, and 
\begin{align}
\nonumber
f_r(r,\ph) &= -\lambda_-(r+1) + \Order{(r+1)^2} \;,\\
f_\ph(r,\ph) &= \frac1{T_-} + \Order{(r+1)^2}
\label{csd9}
\end{align}
as $r\to-1$. Furthermore, $f_\ph(r,\ph)$ is positive, bounded away
from $0$, while $f_r(r,\ph)$ is negative for $\abs{r}<1$ and positive
for $\abs{r}>1$. 
\end{prop}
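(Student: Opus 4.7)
The plan is to construct the diffeomorphism in two stages: first as local normal forms in tubular neighbourhoods of each periodic orbit via Floquet theory, then by extension to the intermediate annulus $\cS$ through transport along the deterministic flow.

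\emph{Local normal forms.} Floquet theory produces, for each sign, a smooth $1$-periodic eigenvector $u_\pm(\ph)$ of the monodromy matrix $U_\pm(\ph+1,\ph)$ associated with the eigenvalue $\e^{\pm\lambda_\pm T_\pm}$ (possibly after passing to a double cover in the non-orientable case, which does not affect the local analysis). I would then define
\begin{equation*}
h_+(r,\ph)=\Gamma_+(\ph)+(r-1)u_+(\ph)\;, \qquad (r,\ph)\in(1-\epsilon,1+\epsilon)\times\fS^1\;,
\end{equation*}
and symmetrically $h_-(r,\ph)=\Gamma_-(\ph)+(r+1)u_-(\ph)$ for $r$ near $-1$. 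Both maps are local $\cC^2$-diffeomorphisms since $u_\pm$ is transverse to $f(\Gamma_\pm)$. Pulling back~\eqref{csd1} through $h_\pm$ and expanding to second order in $r\mp1$, using $\dd\Gamma_\pm/\dd\ph=T_\pm f(\Gamma_\pm)$ together with the Floquet identity $\dd u_\pm/\dd\ph=T_\pm(A_\pm\mp\lambda_\pm\mathbf{I})u_\pm$, yields the expansions~\eqref{csd8}--\eqref{csd9} after projection onto the basis $\{f(\Gamma_\pm),u_\pm\}$. Both the vanishing of the first-order correction to $f_\ph$ in $r\mp 1$ and the exact form of the leading term of $f_r$ result from a direct cancellation between the contribution of $\dot u_\pm$ and the linearisation of $f$.

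\emph{Global extension.} Since $\cS$ contains no proper invariant subset, the Poincar\'e--Bendixson theorem implies that every orbit in $\cS$ has $\alpha$-limit $\Gamma_+$ and $\omega$-limit $\Gamma_-$. Choose a smooth arc $\Sigma\subset\overline{\cS}$ joining a point of $\Gamma_-$ to a point of $\Gamma_+$, tangent to $u_\pm$ at its endpoints; by monotone inward spiralling together with transversality at both endpoints, $\Sigma$ can be taken transverse to $f$ throughout $\cS$. Parametrise $\Sigma$ by $r\in[-1,1]$ so as to match the coordinates provided by $h_\pm^{-1}$ near each endpoint. For $z\in\cS$, define $r(z)$ as the $r$-value of the unique point of $\Sigma$ on the orbit through $z$, and $\ph(z)$ as a rescaled flow time, chosen so that $\dot\ph$ interpolates smoothly between $1/T_+$ and $1/T_-$. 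A smooth radial cut-off then glues this with $h_\pm$. The freedom in the shift of the parametrisations on $\Gamma_\pm$ is used to make the $\ph$-coordinates on the two local pieces consistent along $\Sigma$.

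\emph{Sign conditions.} Near the orbits, $f_\ph=1/T_\pm+\Order{(r\mp1)^2}$ is positive, and transversality of $\Sigma$ to $f$ ensures positivity bounded below in the interior of $\cS$ by compactness. The sign $f_r<0$ for $\abs{r}<1$ holds near the orbits by~\eqref{csd8}--\eqref{csd9}, and in the interior because orbits cross level sets of $r$ strictly inward. For $\abs{r}>1$, the local map $h_+$ extends slightly outside $r=1$ using the unstable direction, and $f_r>0$ there follows from the unstable character of $\Gamma_+$.

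The main technical obstacle is the smooth gluing: arranging $\cC^2$-regularity of $h$ across the cut-off region while preserving both sign conditions simultaneously. This rests on the freedom in the choice of $\Sigma$ and of the interpolating flow-time reparametrisation, together with the monotone structure of the flow in $\cS$ that follows from the no-invariant-subset hypothesis; once these are in place, the remaining assertions are direct consequences of the Floquet normal form.
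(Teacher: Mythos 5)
Your first stage (the local normal forms) is essentially the paper's own argument: the Floquet eigenvector $u_\pm(\ph)$ of the monodromy matrix, the ansatz $h(r,\ph)=\Gamma_\pm(\ph)+(r\mp1)u_\pm(\ph)+\Order{(r\mp1)^2}$, and projection onto the pair $\set{f(\Gamma_\pm(\ph)),u_\pm(\ph)}$ do give \eqref{csd8}--\eqref{csd9}, and this part is fine.

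The global extension, however, contains a genuine error. Every orbit in the open annulus $\cS$ spirals from $\Gamma_+$ (as $t\to-\infty$) to $\Gamma_-$ (as $t\to+\infty$), and therefore meets your transversal arc $\Sigma$ infinitely many times; so \lq\lq the unique point of $\Sigma$ on the orbit through $z$\rq\rq\ does not exist, and $r(z)$ is ill-defined. Worse, any repair that keeps $r$ constant along orbit segments (e.g.\ taking the first crossing) cannot yield the proposition: in the bulk of $\cS$ you would get $f_r\equiv 0$, contradicting the required strict inequality $f_r<0$ for $\abs r<1$ (note that your own sign argument, \lq\lq orbits cross level sets of $r$ strictly inward\rq\rq, is incompatible with a definition in which orbits \emph{lie in} level sets of $r$); and the coordinates cannot be made $1$-periodic in $\ph$, because the first-return map of the flow on $\Sigma$ is a strict contraction towards $\Gamma_-$, so the $r$-value recorded at successive crossings strictly decreases and the chart does not close up on the cylinder --- indeed, if global coordinates with $\dot r=0$ and $\dot\ph>0$ periodic existed, every interior orbit would be periodic. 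The paper's construction avoids this by \emph{not} straightening the flow: one takes a whole one-parameter family of transversal arcs $\Delta_\ph$ joining $\Gamma_+(\ph)$ to $\Gamma_-(\ph)$, tangent to $u_\pm(\ph)$ at the endpoints (your $\Sigma$ can serve as $\Delta_0$), uses these arcs as the level sets of $\ph$, and parametrises each $\Delta_\ph$ by $r$ in such a way that the flow map from $\Delta_\ph$ to $\Delta_{\ph'}$, $\ph<\ph'$, strictly decreases $r$; it is precisely this monotonicity of the parametrisations under the flow that produces $f_r<0$ on $\cS$, and a final $r$-dependent shift $\ph\mapsto\ph+\delta(r)$ secures $f_\ph>0$. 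Without replacing your flow-time construction by such a foliation (or an equivalent device), the sign condition on $f_r$ and the periodicity in $\ph$ cannot both be achieved, so the proposal as written does not prove the proposition.
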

\begin{proof}
The construction of $h$ proceeds in several steps. We start by defining $h$
in a neighbourhood of $r=1$, before extending it to all of $\cD_1$. 
\begin{enum}
\item	We set $h(1,\ph) = \Gamma_+(\ph)$. 
Hence $f(\Gamma_+(\ph))=\sdpar hr(1,\ph)\dot r + 
\Gamma'_+(\ph)\dot\ph$, so that $\dot\ph=1/T_+$ and $\dot
r=0$ whenever $r=1$. 

\item	Let $u_+(0)$ be an eigenvector of the monodromy matrix $U_+(1,0)$
with eigenvalue $\e^{\lambda_+T_+}$. Then it is easy to check that 
\begin{equation}
\label{csd10:1}
u_+(\ph) = \e^{-\lambda_+T_+\ph} U_+(\ph,0) u_+(0)
\end{equation}
is an eigenvector of the monodromy matrix $U_+(\ph+1,\ph)$
with same eigenvalue $\e^{\lambda_+T_+}$, and that 
\begin{equation}
\label{csd10:2}
\dtot{}{\ph} u_+(\ph) = T_+ \bigbrak{A_+(\ph)u_+(\ph) -
\lambda_+ u_+(\ph)}\;.
\end{equation}
We now impose that 
\begin{equation}
\label{csd10:3}
h(r,\ph) = \Gamma_+(\ph) + (r-1) u_+(\ph) + \Order{(r-1)^2}
\end{equation}
as $r\to1$. This implies that 
\begin{equation}
\label{csd10:4}
f(h(r,\ph)) = f(\Gamma_+(\ph)) + (r-1) A_+(\ph)u_+(\ph) +
\Order{(r-1)^2}\;,
\end{equation}
which must be equal to 
\begin{align}
\label{csd10:5}
\dot z 
&= \bigbrak{\Gamma'_+(\ph) + u'_+(\ph)(r-1)
+ \Order{(r-1)^2}}\dot\ph + \bigbrak{u_+(\ph)  + \Order{(r-1)}}\dot{r}
\nonumber\\
&= T_+ \bigbrak{f(\Gamma_+(\ph)) + \bigpar{A_+(\ph)u_+(\ph) -
\lambda_+ u_+(\ph)}(r-1) + \Order{(r-1)^2}}\dot\ph  
\nonumber\\
&\phantom{{}=}{}{}+
\bigbrak{u_+(\ph)  + \Order{(r-1)}}\dot{r}\;.
\end{align}
Comparing with~\eqref{csd10:4} and, in a first step, projecting on a vector normal to $u_+(\ph)$ shows that
$\dot\ph = 1/T_+ + \Order{(r-1)^2} + \Order{\dot r(r-1)}$. Then, in a second step, projecting on a vector perpendicular to
$f(\Gamma_+(\ph))$ shows that $\dot r = \lambda_+(r-1) +
\Order{(r-1)^2}$, which also implies $\dot\ph = 1/T_+ + \Order{(r-1)^2}$.

\item	In order to extend $h(r,\ph)$ to all of $\cD_1$, we start by
constructing a curve segment $\Delta_0$, connecting $\Gamma_+(0)$ to some
point $\Gamma_-(\ph^\star)$ on the stable orbit, which is crossed by all
orbits of the vector field in the same direction (see~\figref{fig_csd}).
Reparametrising $\Gamma_-$
if necessary, we may assume that $\ph^\star=0$. The curve $\Delta_0$ can
be chosen to be tangent to $u_+(0)$ in $\Gamma_+(0)$, and to the similarly
defined vector $u_-(0)$ in $\Gamma_-(0)$. We set 
\begin{equation}
\label{csd10:6}
h(r,\ph) = \Gamma_-(\ph) + (r+1) u_-(\ph) + \Order{(r+1)^2}
\end{equation}
as $r\to-1$, which implies in particular the relations~\eqref{csd9}. 

The curve segment $\Delta_0$ can be parametrised by a function  $r\mapsto
h(r,0)$ which is compatible with~\eqref{csd10:4} and~\eqref{csd10:6}, that
is, $\sdpar hr(\pm1,0) = u_\pm(0)$. We proceed similarly with each element
of a smooth deformation $\set{\Delta_\ph}_{\ph\in\fS^1}$ of
$\Delta_0$, where $\Delta_\ph$ connects $\Gamma_+(\ph)$ to
$\Gamma_-(\ph)$ and is tangent to $u_\pm(\ph)$. The parametrisation
$r\mapsto h(r,\ph)$ of $\Delta_\ph$ can be chosen in such a way that
whenever $\ph<\ph'$, the orbit starting in $h(r,\ph)\in\cS$ first
hits $\Delta_{\ph'}$ at a point $h(r',\ph')$ with $r'<r$. This
guarantees that 
\begin{equation}
\label{csd10:7}
\begin{split}
\dot r &= f_r(r,\ph)\;, \\
\dot \ph &= f_\ph(r,\ph)\;
\end{split}
\end{equation}
with $f_r(r,\ph)<0$ for $(r,\ph)\in\cS$. 

\item	We can always assume that $f_\ph(r,\ph)>0$, replacing, if
necessary, $\ph$ by $\ph+\delta(r)$ for some function $\delta$
vanishing in $r=1$. 
\qed
\end{enum}
\renewcommand{\qed}{}
\end{proof}

\begin{remark}
\label{rem_csd}
One can always use $\ph$ as new time variable, and rewrite~\eqref{csd7}
as the one-dimensional, non-autonomous equations 
\begin{equation}
\label{csd11}
\dtot r\ph = \frac{f_r(r,\ph)}{f_\ph(r,\ph)} \bydef
F(r,\ph)\;.
\end{equation}
Note, in particular, that 
\begin{equation}
\label{csd12}
F(\pm1,\ph) = T_\pm \lambda_\pm (r\mp1)) + \bigOrder{(r\mp1)^2}\;.
\end{equation}
\end{remark}


\subsection{Stochastic system}
\label{ssec_css}

We now turn to the SDE~\eqref{css1} which is equivalent, via the transformation
$z=h(r,\ph)$
of Proposition~\ref{prop_csd}, to a system of the form 
\begin{equation}
\label{css2}
\begin{split}
\6r_t &= f_r(r_t,\ph_t;\sigma) \,\6t + \sigma g_r(r_t,\ph_t) \6W_t\;,\\
\6\ph_t &= f_\ph(r_t,\ph_t;\sigma) \,\6t 
+ \sigma g_\ph(r_t,\ph_t) \6W_t\;.
\end{split}
\end{equation}
In fact, It\^o's formula shows that $f=\transpose{(f_r,f_\ph)}$ and
$g=\transpose{(g_r,g_\ph)}$, where $g_r$ and $g_\ph$ are $(1\times k)$-matrices,
satisfying 
\begin{align}
\nonumber
g(h(r,\ph)) ={}& \sdpar hr(r,\ph) g_r(r,\ph) + \sdpar
h\ph(r,\ph) g_\ph(r,\ph) \\
\label{css3}
f(h(r,\ph)) ={}& \sdpar hr(r,\ph) f_r(r,\ph) + \sdpar
h\ph(r,\ph) f_\ph(r,\ph) \\
\nonumber
{}&+ \frac12 \sigma^2 \bigbrak{\sdpar h{rr}(r,\ph)
g_r\transpose{g_r}(r,\ph) + 2\sdpar h{r\ph}(r,\ph)
g_r\transpose{g_\ph}(r,\ph) + \sdpar h{\ph\ph}(r,\ph)
g_\ph\transpose{g_\ph}(r,\ph)}\;.
\end{align}
The first equation allows to determine $g_r$ and $g_\ph$, by projection
on $\sdpar hr$ and $\sdpar h\ph$. The second one shows that 
\begin{align}
\nonumber
f_r(r,\ph;\sigma) &= f_r^0(r,\ph) + \sigma^2 f_r^1(r,\ph)\;,\\
f_\ph(r,\ph;\sigma) &= f_\ph^0(r,\ph) + 
\sigma^2 f_\ph^1(r,\ph)\;,
\label{css4}
\end{align}
where $f_r^0$ and $f_\ph^0$ are the functions of
Proposition~\ref{prop_csd}. 

A drawback of the system~\eqref{css2} is that the drift term $f_r$ in
general no longer vanishes in $r=\pm1$. This can be seen as an effect
induced by the curvature of the orbit, since $f_r^1(\pm1,\ph)$ depends
on $\Gamma'_\pm(\ph)$ and $u'_\pm(\ph)$. This problem can, however,
be solved by a further change of variables. 

\begin{prop}
\label{prop_css}
There exists a
change of variables of the form $y=r+\Order{\sigma^2}$, leaving $\ph$ unchanged,  such that the drift
term for $\6y_t$ vanishes in $y=\pm1$. 
\end{prop}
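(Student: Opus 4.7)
The plan is to seek a transformation of the form
\begin{equation*}
 y = r + \sigma^2 v(r,\ph)\;,
\end{equation*}
where $v\in\cC^2((-L,L)\times\fS^1,\R)$ is to be determined, and then to choose $v$ so as to cancel the $\sigma^2$-order drift correction at $r=\pm1$. Since the change of variables is $\Order{\sigma^2}$-close to the identity, it is a diffeomorphism for all $\sigma$ small enough, and leaves the $\ph$-coordinate unchanged by construction.

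First I would apply It\^o's formula. Writing $f_r = f_r^0 + \sigma^2 f_r^1$ and $f_\ph = f_\ph^0 + \sigma^2 f_\ph^1$ as in~\eqref{css4}, and observing that quadratic-variation terms produce contributions of order $\sigma^4$, the drift of $\6y_t$ takes the form
\begin{equation*}
 \tilde f_r(r,\ph;\sigma) = f_r^0(r,\ph)
 + \sigma^2\Bigbrak{f_r^1(r,\ph) + v_r(r,\ph) f_r^0(r,\ph) + v_\ph(r,\ph)f_\ph^0(r,\ph)}
 + \Order{\sigma^4}\;.
\end{equation*}
To impose that this vanish at $y=\pm1$, I would substitute $r = \pm 1 - \sigma^2 v(\pm1,\ph) + \Order{\sigma^4}$ and Taylor-expand using $f_r^0(\pm1,\ph)=0$ together with $\partial_r f_r^0(\pm1,\ph) = \pm\lambda_\pm$ and $f_\ph^0(\pm1,\ph) = 1/T_\pm$ from~\eqref{csd8}--\eqref{csd9}. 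Collecting terms of order $\sigma^2$, the vanishing of the drift at $y=\pm1$ reduces to the two decoupled first-order linear ODEs
\begin{align*}
 \frac{1}{T_+}\,v_\ph(1,\ph) - \lambda_+ v(1,\ph) &= -f_r^1(1,\ph)\;,\\
 \frac{1}{T_-}\,v_\ph(-1,\ph) + \lambda_- v(-1,\ph) &= -f_r^1(-1,\ph)\;,
\end{align*}
for the boundary traces $\ph\mapsto v(\pm1,\ph)$, with periodic forcing.

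Because $\pm\lambda_\pm T_\pm$ is nonzero, the monodromy of each of these scalar linear ODEs is $\e^{\mp\lambda_\pm T_\pm}\ne1$, so each admits a unique periodic $\cC^2$ solution, obtained by averaging the forcing against the appropriate exponential Green's function. Denote these solutions by $v_+(\ph)$ and $v_-(\ph)$. I would then extend them to a global $\cC^2$ function $v(r,\ph)$ on $(-L,L)\times\fS^1$ with $v(\pm1,\ph)=v_\pm(\ph)$, for instance by a smooth partition of unity in $r$. With this choice, the drift of $\6y_t$ vanishes at $y=\pm1$ up to terms of order $\sigma^4$; the residual $\Order{\sigma^4}$ error at the orbits can, if desired, be removed by iterating the same construction with a further correction $\sigma^4 v^{(2)}(r,\ph)$, a direct analogue with the same ODEs solved at higher order. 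The only real care point, and the step I would verify explicitly, is that the boundary-value ODEs above have unique periodic solutions and that the resulting extension stays $\Order{1}$ uniformly in $\ph$, so that $y=r+\sigma^2 v(r,\ph)$ is indeed a $\cC^2$-diffeomorphism from the coordinate strip to itself for all $\sigma<\sigma_0$.
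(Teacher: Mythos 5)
Your order-$\sigma^2$ computation is sound and in fact reproduces the paper's reduced equation \eqref{css5:7}: the two periodic boundary ODEs you write for $v(\pm1,\ph)$, solvable because the monodromy $\e^{\mp\lambda_\pm T_\pm}\neq1$, are exactly (up to normalisation of the shift) the $\sigma=0$ limit of what the paper solves. But there is a genuine gap at the end. The proposition asserts that the drift of $\6y_t$ vanishes \emph{exactly} at $y=\pm1$ — this is what is used later, when the dynamics near the unstable orbit is written with drift $\lambda_+r_t+b_r(r_t,\ph_t)$, $\abs{b_r}\leqs Mr^2$, so that reflection-principle and comparison arguments apply at $r=0$ — whereas your construction only makes the drift $\Order{\sigma^4}$ there. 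Your proposed fix, \lq\lq iterating the same construction with a further correction $\sigma^4v^{(2)}$\rq\rq, does not close this: each iteration only pushes the residual up by one power of $\sigma^2$, so no finite number of steps gives exact vanishing, and the infinite iteration is a formal power series whose convergence you have not addressed.

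The obstruction is not cosmetic. Once you demand exact vanishing, the unknown periodic shift enters the drift also through the It\^o correction, i.e.\ through its \emph{second} $\ph$-derivative multiplied by $\sigma^2$ (the $\sigma^2 b(\ph,\Delta)\Delta''_+$ term in \eqref{css5:4}). The condition is therefore a singularly perturbed second-order periodic ODE, not a hierarchy of decoupled first-order ones; naive iteration differentiates the previous correction twice at each step, so one faces a loss-of-derivatives problem. The paper resolves this by recasting the condition as the slow--fast system \eqref{css5:5} and invoking Fenichel's theorem to reduce to the regularly perturbed equation \eqref{css5:7}, whose $\sigma=0$ periodic solution (your $v_\pm$) persists by standard regular perturbation theory. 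To repair your argument you would need either this slow--fast/invariant-manifold step or some equivalent fixed-point argument in a space controlling two $\ph$-derivatives uniformly in $\sigma$; as written, you have proved a weaker statement than the one claimed.
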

\begin{proof}
We shall look for a change of variables of the form 
\begin{equation}
\label{css5:1}
y = Y(r,\ph) = r - \sigma^2 \bigbrak{\Delta_-(\ph)(r-1) +
\Delta_+(\ph)(r+1)}\;,
\end{equation}
where $\Delta_\pm(\ph)$ are periodic functions, representing the shift of
variables near the two periodic orbits. Note that $y=1$ for 
\begin{equation}
\label{css5:2}
r = 1 + 2\sigma^2 \Delta_+(\ph) + \Order{\sigma^4}\;.
\end{equation}
Using It\^o's formula, one obtains a drift term for $\6y_t$ satisfying 
\begin{align}
\nonumber
f_y(1,\ph) 
={}& f_r(1 + 2\sigma^2 \Delta_+(\ph) + \Order{\sigma^4}, \ph) \bigbrak{1-\sigma^2\brak{\Delta_-(\ph) + \Delta_+(\ph)}}\\
\label{css5:3}
{}&- \sigma^2 \bigbrak{2 \Delta'_+(\ph)f_\ph(1,\ph) +
\Order{\sigma^2}} \\
\nonumber
{}&- \frac12 \sigma^4
\bigbrak{
4\Delta'_+(\ph)
g_r(1,\ph) \transpose{g_\ph(1,\ph)} 
+2\Delta''_+(\ph)g_\ph(1,\ph) \transpose{g_\ph(1,\ph)} 
+ \Order{\sigma^2}}\;,
\end{align}
where the terms $\Order{\sigma^2}$ depend on $\Delta_\pm$ and $\Delta'_\pm$.
Using~\eqref{csd8}, we see that, in order that $f_y(1,\ph)$ vanishes,
$\Delta_+(\ph)$ has to satisfy an equation of the form 
\begin{equation}
\label{css5:4}
\lambda_+ \Delta_+(\ph) - \frac1{T_+}\Delta_+'(\ph) 
+ r(\ph,\Delta_\pm(\ph),\Delta'_\pm(\ph)) - \sigma^2
b(\ph,\Delta_\pm(\ph))\Delta_+''(\ph) = 0\;,
\end{equation}
where $r(\ph,\Delta_\pm,\Delta'_\pm) = f^1_r(1,\ph) +
\Order{\sigma^2}$. Note that $b(\ph,\Delta)>0$ is bounded away from zero
for small $\sigma$ by our ellipticity assumption on $g$. 
A similar equation is obtained for $\Delta_-(\ph)$. If
$\Delta=(\Delta_+,\Delta_-)$ and $\Xi = (\Delta'_+,\Delta'_-)$, we arrive at
a system of the form
\begin{equation}
\label{css5:5}
\begin{split}
\sigma^2 B(\ph,\Delta) \Xi' &= - D\Xi + \Lambda\Delta +
R(\ph,\Delta,\Xi) \;,\\
\Delta' &= \Xi \;,\\
\ph' &= 1\;.
\end{split}
\end{equation} 
Here $D$ denotes a diagonal matrix with entries $1/T_+$ and $1/T_-$, and
$\Lambda$ denotes a diagonal matrix with entries $\lambda_\pm$. The
system~\eqref{css5:5} is a slow--fast ODE, in which $\Xi$ plays the r\^ole
of the fast variable, and $(\Delta,\ph)$ are the slow variables. The fast
vector field vanishes on a normally hyperbolic slow manifold of the form
$\Xi = \Xi^*(\Delta,\ph)$, where 
\begin{equation}
\label{css5:6}
\Xi_\pm^*(\Delta,\ph) = T_\pm \bigbrak{\lambda_\pm \Delta_\pm(\ph) +
f^1_r(\pm1,\ph)} + \Order{\sigma^2}\;.
\end{equation}
By Fenichel's theorem~\cite{Fenichel}, there exists an invariant manifold
$\Xi=\overline\Xi(\ph,\Delta)$ in a $\sigma^2$-neighbour\-hood of the slow
manifold. The reduced equation on this invariant manifold takes the form 
\begin{equation}
\label{css5:7}
\Delta'_\pm = T_\pm \bigbrak{\lambda_\pm \Delta_\pm(\ph) +
f^1_r(\pm1,\ph)} + \Order{\sigma^2}\;.
\end{equation}
The limiting equation obtained by setting $\sigma$ to zero admits an
explicit periodic solution. Using standard arguments of regular perturbation
theory, one then concludes that the full equation~\eqref{css5:7} also admits
a periodic solution. 
\end{proof}



\section{Large deviations}
\label{sec_ld}

In this section, we consider the dynamics near the unstable periodic orbit on
the level of large deviations. We want to estimate the infimum $I_\ph$ of the
rate function for the event $\Gamma(\delta)$ that a sample path, starting at
sufficiently small distance $\delta$ from the unstable orbit, reaches the 
unstable orbit at the moment when the angular variable has increased by $\ph$. 

Consider first the system linearised around the unstable orbit, given by 
\begin{equation}
 \label{ld101}
 \dtot{}{\ph} 
 \begin{pmatrix}
r \\ p_r
\end{pmatrix}
=
\begin{pmatrix}
\lambda_+T_+  & D_{rr}(0,\ph) \\
0 & -\lambda_+ T_+
\end{pmatrix}
\begin{pmatrix}
r \\ p_r
\end{pmatrix}
\;.
\end{equation} 
(We have redefined $r$ so that the unstable orbit is in $r=0$.)
Its solution can be written in the form 
\begin{equation}
 \label{ld102}
 \begin{pmatrix}
  r(\ph) \\ p_r(\ph)
 \end{pmatrix}
= 
\begin{pmatrix}
\e^{\lambda_+T_+(\ph-\ph_0)} 
& \e^{\lambda_+T_+(\ph-\ph_0)} 
\displaystyle\vrule height 16pt depth 16pt width 0pt
\int_{\ph_0}^\ph \e^{-2\lambda_+T_+ (u-\ph_0)} D_{rr}(0,u) \6u \\
0 
& \e^{-\lambda_+T_+(\ph-\ph_0)} 
\end{pmatrix}
\begin{pmatrix}
r(\ph_0) \\ p_r(\ph_0) 
\end{pmatrix}\;.
\end{equation} 
The off-diagonal term of the above fundamental matrix can also be expressed in
the form 
\begin{equation}
 \label{ld103}
 \e^{\lambda_+T_+(\ph-\ph_0)}\hper(\ph_0) - 
\e^{-\lambda_+T_+(\ph-\ph_0)}\hper(\ph)\;,
\end{equation} 
where 
\begin{equation}
 \label{ld104}
 \hper(\ph) = \frac{\e^{2\lambda_+T_+\ph}}{1-\e^{-2\lambda_+T_+}}
 \int_\ph^{\ph_+1} \e^{-2\lambda_+T_+ u} D_{rr}(0,u)\6u
\end{equation} 
is the periodic solution of the equation 
$\6h/\6\ph = 2\lambda_+T_+ h - D_{rr}(0,\ph)$. The expression~\eqref{ld103} shows
that for initial conditions satisfying $r(\ph_0) = -\hper(\ph_0) p_r(\ph_0)$,
the orbit $(r(\ph),p_r(\ph))$ will converge to $(0,0)$. The stable manifold of the
unstable orbit is thus given by the equation $r=-\hper(\ph)p_r$. 

We consider now the following situation: Let $(r(0),p_r(0))$ belong to the
stable manifold. The orbit starting in this point takes an infinite time to
reach the unstable orbit, and gives rise to a value $I_\infty$ of the rate
function. We want to compare this value to the rate function $I_\ph$ of an
orbit starting at the same $r(0)$, but reaching $r=0$ in finite time $\ph$. 

Recall that the rate function has the expression 
\begin{equation}
\label{ld104b}
I(\gamma) = \dfrac12 \int_0^T \transpose{\psi_s} D(\gamma_s) \psi_s
\,\6s 
= \frac12 \int \Bigbrak{D_{rr} p_r^2 + 2 D_{r\ph} p_r p_\ph + D_{\ph\ph}
p_\ph^2} \6\ph\;.
\end{equation}
However, $p_\ph$ can be expressed in terms of $r,\ph$ and $p_r$ using the
Hamiltonian, and is of order $r^2+p_r^2$. Thus the leading term in the rate
function near the unstable orbit is $D_{rr} p_r^2$. As a first approximation
we may thus consider 
\begin{equation}
 \label{ld104c}
 I^0(\gamma) = \frac12 \int D_{rr} p_r^2\6\ph\;.
\end{equation} 

\begin{prop}[Comparison of rate functions in the linear case]
Denote by $I^0_\infty$ and $I^0_\ph$ the minimal value of the rate function $I^0$ for orbits starting in $r(0)$ and reaching the unstable orbit in infinite time or in time $\ph$, respectively. We have 
\begin{equation}
 \label{ld105}
 I^0_\ph - I^0_\infty = \frac12 \delta^2 \e^{-2\lambda_+T_+\ph}
\frac{\hper(\ph)}{\hper(0)^2} 
 \Bigbrak{1 + \bigOrder{\e^{-2\lambda_+T_+\ph}}}\;.
\end{equation} 
\end{prop}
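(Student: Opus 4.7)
The plan is to exploit the fact that the linearised Hamilton equations from~\eqref{ld101} admit an explicit solution, together with the Hamiltonian structure of the approximate rate function $I^0$. Since the second component of~\eqref{ld101} decouples and gives $p_r(s) = \e^{-\lambda_+ T_+ s}p_r(0)$, the rate function along a trajectory of Hamilton's equations factorises as
\begin{equation}
 I^0(\gamma) = \frac12 p_r(0)^2 \int_0^\ph \e^{-2\lambda_+T_+ s} D_{rr}(0,s)\,\6s\;,
\end{equation}
so the minimisation reduces to determining $p_r(0)$ from the boundary conditions. Because $I^0$ is a positive-definite quadratic form in $p_r$, any critical point produced by Hamilton's equations is automatically the unique minimiser compatible with the boundary data, so I will not need to argue minimality separately.

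The first concrete step is to solve for $p_r(0)$ using the fundamental matrix~\eqref{ld102} rewritten with the off-diagonal entry in the form~\eqref{ld103}. Imposing $r(0)=-\delta$ and $r(\ph)=0$ yields
\begin{equation}
 p_r(0) = \frac{\delta}{\hper(0) - \e^{-2\lambda_+T_+\ph}\hper(\ph)}\;,
\end{equation}
while for the infinite-time minimiser the stable manifold condition $r(0)=-\hper(0)p_r(0)$ directly gives $p_r(0)=\delta/\hper(0)$. The second step is to evaluate the integral above. Differentiating $\e^{-2\lambda_+T_+\ph}\hper(\ph)$ and using the defining ODE $\6\hper/\6\ph = 2\lambda_+T_+\hper - D_{rr}(0,\ph)$ gives the telescoping identity
\begin{equation}
 \int_0^\ph \e^{-2\lambda_+T_+ s} D_{rr}(0,s)\,\6s = \hper(0) - \e^{-2\lambda_+T_+\ph}\hper(\ph)\;,
\end{equation}
and letting $\ph\to\infty$ gives $\hper(0)$ for the improper integral (consistent with the periodicity-based geometric-series representation of $\hper(0)$).

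Substituting back yields the clean expressions $I^0_\infty = \delta^2/(2\hper(0))$ and $I^0_\ph = \delta^2/\bigl[2(\hper(0)-\e^{-2\lambda_+T_+\ph}\hper(\ph))\bigr]$, so
\begin{equation}
 I^0_\ph - I^0_\infty = \frac{\delta^2}{2}\cdot\frac{\e^{-2\lambda_+T_+\ph}\hper(\ph)}{\hper(0)\bigl[\hper(0) - \e^{-2\lambda_+T_+\ph}\hper(\ph)\bigr]}\;.
\end{equation}
Expanding the last factor as a geometric series in $\e^{-2\lambda_+T_+\ph}\hper(\ph)/\hper(0)$, which is small for $\ph$ large since $\hper$ is bounded, produces exactly~\eqref{ld105}. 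The computation is essentially algebraic; the only point requiring a little care is verifying that the telescoping identity genuinely uses the canonical periodic choice of $\hper$ and not some other antiderivative, which is guaranteed by the explicit integral representation~\eqref{ld104}. No substantive obstacle is expected, since the structure of the linear problem makes every step explicit.
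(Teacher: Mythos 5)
Your proof is correct, and it uses the same basic ingredients as the paper's: the explicit fundamental solution \eqref{ld102}--\eqref{ld103} of the linearised Hamiltonian system, the decoupled decay $p_r(u)=\e^{-\lambda_+T_+u}p_r(0)$ which reduces $I^0$ to $\tfrac12 p_r(0)^2\int_0^\ph\e^{-2\lambda_+T_+u}D_{rr}(0,u)\,\6u$, and the identity $\int_0^\ph\e^{-2\lambda_+T_+u}D_{rr}(0,u)\,\6u=\hper(0)-\e^{-2\lambda_+T_+\ph}\hper(\ph)$, which is exactly the paper's \eqref{ld106:5}. The organisation is somewhat different, though: the paper writes the finite-time extremal as a perturbation of the heteroclinic one, with initial momentum $p_r(0)+q$, solves for $q$ only up to a factor $1+\Order{\e^{-2\lambda_+T_+\ph}}$, and evaluates the difference of the two rate functions directly; you instead determine the exact initial momenta from the boundary conditions and obtain the closed forms $I^0_\infty=\delta^2/(2\hper(0))$ and $I^0_\ph=\delta^2/\bigl(2\hper(0)-2\e^{-2\lambda_+T_+\ph}\hper(\ph)\bigr)$, expanding only at the very end. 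This buys an exact expression for $I^0_\ph-I^0_\infty$, of which \eqref{ld105} is the asymptotic expansion; the error term is controlled because the periodic function $\hper$ is bounded above and below by positive constants. Your sign convention $r(0)=-\delta$ is opposite to the paper's $\hper(0)p_r(0)=-r(0)=-\delta$, which is immaterial since the result is quadratic in $\delta$, and your remark that convexity of the quadratic functional makes the Hamiltonian critical point the minimiser is at the same level of rigour as the paper, which likewise evaluates the rate function along Hamiltonian trajectories without further comment.
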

\begin{proof}
Let $(r^0,p^0_r)(u)$ be the orbit with initial condition $(r(0),p_r(0))$, and 
$(r^1,p^1_r)(u)$ the one with initial condition $(r(0),p_r(0)+q)$. Then we have by~\eqref{ld102} with $\ph_0=0$,
\eqref{ld103} and the relation $r(0)=-\hper(0)p_r(0)$  
\begin{align}
\nonumber
r^1(u) &= \e^{-\lambda_+T_+u} \frac{\hper(u)}{\hper(0)} r(0) 
+  q\Bigbrak{ \e^{\lambda_+T_+u}\hper(0) -  \e^{- \lambda_+T_+u}\hper(u)}\;, \\
p^1_r(u) &= \e^{-\lambda_+T_+u} (p_r(0)+q)\;.
\label{ld106:1} 
\end{align}
The requirement $r^1(\ph)=0$ implies 
\begin{equation}
 \label{ld106:2} 
 q = \e^{-2\lambda_+T_+\ph}
\frac{\hper(\ph)}{\hper(0)} 
p_r(0)\Bigbrak{1+\bigOrder{\e^{-2\lambda_+T_+\ph}}}
\;.
\end{equation} 
Since the solutions starting on the stable manifold satisfy $p_r^0(u) =
\e^{-2\lambda_+T_+u} p_r(0)$, we get 
\begin{equation}
 \label{ld106:3}
p_r^1(u)^2 -  p_r^0(u)^2 
= 2q p_r^0(u) \e^{-\lambda_+T_+u} + q^2 \e^{-2\lambda_+T_+u}\;.
\end{equation} 
The difference between the two rate functions is thus given by 
\begin{equation}
 \label{ld106:4}
 2(I^0_\ph - I^0_\infty) = 
 (2qp_r(0)+q^2) \int_0^\ph \e^{-2\lambda_+T_+ u}D_{rr}(0,u)\6u 
 - p_r(0)^2 \int_\ph^\infty \e^{-2\lambda_+T_+ u}D_{rr}(0,u)\6u\;.
\end{equation} 
Using the relations (cf.~\eqref{ld103}) 
\begin{align}
\nonumber
\int_0^\ph \e^{-2\lambda_+T_+ u}D_{rr}(0,u)\6u 
&= \hper(0) - \e^{-2\lambda_+T_+ \ph} \hper(\ph)\;, \\
\nonumber
\int_0^\infty \e^{-2\lambda_+T_+ u}D_{rr}(0,u)\6u 
&= \hper(0)\;, \\
\int_\ph^\infty \e^{-2\lambda_+T_+ u}D_{rr}(0,u)\6u 
&= \e^{-2\lambda_+T_+ \ph} \hper(\ph)
\label{ld106:5} 
\end{align}
and $\hper(0)p_r(0)=-r(0)=-\delta$ yields the result. 
\end{proof}

We can now draw on standard perturbation theory to obtain the following result
for the nonlinear case.

\begin{prop}[Comparison of rate functions in the nonlinear case]
\label{prop_ldp_nonlinear}
For sufficiently small $\delta$, the infimum $I_\ph$ of the
rate function for the event $\Gamma(\delta)$ satisfies
\begin{equation}
 \label{ld107}
 I_\ph - I_\infty = \frac12 \delta^2 \e^{-2\lambda_+T_+\ph}
\frac{\hper(\ph)}{\hper(0)^2} 
 \Bigbrak{1 + \bigOrder{\e^{-2\lambda_+T_+\ph}} + \Order{\delta}}\;.
\end{equation}
\end{prop}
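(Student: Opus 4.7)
The strategy is to treat the nonlinear Hamiltonian system~\eqref{ld9} as a perturbation of its linearisation~\eqref{ld101} around the unstable orbit $r=0$, and to transfer the linear computation just performed to the nonlinear setting via normal-form and stable-manifold arguments. Throughout, all relevant orbits stay in a $\delta$-neighbourhood of the unstable orbit, where the Hamiltonian can be expanded as
\begin{equation*}
H(r,\ph,p_r,p_\ph) = H_{\text{lin}}(r,\ph,p_r) + p_\ph \Bigbrak{f_\ph(0,\ph) + \Order{r}} + \Order{(r^2+p_r^2)(\abs{r}+\abs{p_r}+\abs{p_\ph})}\;,
\end{equation*}
so that the constraint $H=0$ determines $p_\ph$ as a function of $(r,\ph,p_r)$ of order $r^2+p_r^2$. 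Substituting this back into~\eqref{ld104b}, one gets
\begin{equation*}
I(\gamma) = I^0(\gamma) + \int \bigOrder{\abs{p_r}^3 + \abs{r}\abs{p_r}^2}\6\ph\;,
\end{equation*}
so the nonlinear correction to the rate function is of relative size $\Order{\delta}$ on paths lying in a $\delta$-neighbourhood of the unstable orbit.

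First I would control the geometry. By the stable manifold theorem applied to the Poincar\'e map at $\ph=0$ of the Hamiltonian flow, the nonlinear stable manifold $\cW^{\mathrm s}_+$ of the hyperbolic orbit $(r,p_r)=(0,0)$ is a $\cC^2$ graph $r=-\hper(\ph)p_r + \Order{p_r^2}$, with exponential contraction rate $\e^{-\lambda_+T_+}$ per period in the $\ph$-direction. In particular, the optimal infinite-time orbit starting with $r(0)=-\delta$ sits on this manifold and satisfies $p_r(u) = \e^{-\lambda_+T_+ u}p_r(0)(1+\Order{\delta})$, so its rate function $I_\infty$ differs from $I^0_\infty$ by a multiplicative factor $1+\Order{\delta}$. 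Similarly, for each $\ph$ large enough, standard shooting/implicit-function arguments produce a unique (up to the translation symmetry along the orbit) minimiser $\gamma_\ph$ of $I$ reaching $r=0$ at time $\ph$ from the initial condition $r(0)=-\delta$: it is the unique solution of~\eqref{ld9} with $p_\ph$ obtained from $H=0$ whose linearisation matches the one computed in~\eqref{ld106:1} to leading order, with a perturbative shift $q = q_{\text{lin}}(1+\Order{\delta})$.

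Next I would carry out the comparison. Writing $\gamma_\ph = \gamma_\infty + \eta$, where $\eta$ is the finite-time correction, I would expand
\begin{equation*}
I_\ph - I_\infty = \bigpscal{\nabla I(\gamma_\infty)}{\eta} + \frac12 \bigpscal{\eta}{\nabla^2 I(\gamma_\infty)\eta} + \Order{\norm{\eta}^3}\;.
\end{equation*}
The first-order term is controlled by the Euler--Lagrange equations: on the interior $\gamma_\infty$ satisfies them exactly, so only boundary contributions at $u=0$ and $u=\ph$ survive, and these are $\Order{\delta\cdot q}$ with $q$ as in~\eqref{ld106:2}. The quadratic term is governed by the linearised action, which by the computation above equals $\frac12\delta^2 \e^{-2\lambda_+T_+\ph} \hper(\ph)/\hper(0)^2$ up to $(1+\Order{\e^{-2\lambda_+T_+\ph}})$, while cubic and higher terms are controlled by $\norm{\eta}_\infty = \Order{\delta\e^{-\lambda_+T_+\ph}}$ combined with the a priori bound $\abs{r},\abs{p_r}=\Order{\delta}$ on the optimal orbit.

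The main obstacle will be obtaining uniformity of all these estimates in $\ph$ going to infinity while simultaneously letting $\delta\to0$: both $I_\ph-I_\infty$ and the perturbation $q$ are exponentially small in $\ph$, and one must make sure that the nonlinear corrections produce only a $(1+\Order{\delta})$ multiplicative error and an $\Order{\e^{-2\lambda_+T_+\ph}}$ multiplicative error, rather than additive terms that could dominate. To handle this I would use the normal form coming from the $\cC^2$-conjugacy between the nonlinear Hamiltonian flow near $(0,0)$ and its linearisation on each energy surface $H=\Order{\delta^2}$ (non-resonance is automatic since the exponents are $\pm\lambda_+T_+$), which reduces the nonlinear estimate to the already-proven linear identity~\eqref{ld105} plus controlled, $\delta$-small perturbative corrections. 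Collecting all terms gives~\eqref{ld107}.
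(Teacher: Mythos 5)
Your overall strategy -- treating the nonlinear Hamiltonian system as an $\Order{\delta}$ perturbation of its linearisation and invoking the stable manifold theorem together with the exponential decay of the optimal orbits -- is the same as the paper's, which rescales $(r,\ph,p_r,p_\ph)=\delta(\bar r,\bar\ph,\bar p_r,\bar p_\ph)$ and uses the stable manifold theorem plus a Gronwall argument to turn all nonlinear corrections into a multiplicative factor $1+\Order{\delta}$ in the explicit linear computation. The genuine gap is in your central expansion step. You write $I_\ph-I_\infty=\pscal{\nabla I(\gamma_\infty)}{\eta}+\frac12\pscal{\eta}{\nabla^2 I(\gamma_\infty)\eta}+\Order{\norm{\eta}^3}$, assign the full leading constant to the quadratic term, and dismiss the first-order term as boundary contributions of size $\Order{\delta\, q}$. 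But $q\asymp\delta\e^{-2\lambda_+T_+\ph}$, so $\delta q\asymp\delta^2\e^{-2\lambda_+T_+\ph}$ is exactly the size of the claimed leading term $\frac12\delta^2\e^{-2\lambda_+T_+\ph}\hper(\ph)/\hper(0)^2$; an unevaluated additive term of this order leaves the prefactor -- which is the whole content of the proposition -- undetermined. This is not a nonlinear subtlety: already in the linear model the first-order contribution has this size and must be computed, not merely bounded.

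Concretely, there are two consistent bookkeepings and your sketch conflates them. If you compare the two orbits only on $[0,\ph]$ and subtract the tail, the term linear in $q$, namely $qp_r(0)\int_0^\ph\e^{-2\lambda_+T_+u}D_{rr}(0,u)\,\6u$, equals to leading order $\delta^2\e^{-2\lambda_+T_+\ph}\hper(\ph)/\hper(0)^2$, i.e.\ \emph{twice} the answer, and it must be combined exactly with the omitted tail $-\frac12 p_r(0)^2\int_\ph^\infty\e^{-2\lambda_+T_+u}D_{rr}(0,u)\,\6u$, which is \emph{minus once} the answer; this is precisely how~\eqref{ld106:4} produces~\eqref{ld105}. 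Alternatively, if you extend $\gamma_\ph$ along the unstable orbit so both paths live on $[0,\infty)$, then the first variation at $\gamma_\infty$ vanishes: there is no boundary term at $u=\ph$, the variation vanishes at $u=0$, the contribution at infinity decays, and in the linear case the exact relation $q\,[\hper(0)-\e^{-2\lambda_+T_+\ph}\hper(\ph)]=p_r(0)\,\e^{-2\lambda_+T_+\ph}\hper(\ph)$ forced by $r^1(\ph)=0$ makes the cancellation exact, so that the difference is the pure second variation and only then carries the stated constant. Either version works, but the cancellation has to be exhibited; as written, your accounting cannot yield~\eqref{ld107}. A milder issue of the same flavour affects your last step: a $\cC^2$ conjugacy linearises the orbits but transforms the integrand of $I$ (and $D_{rr}$, $\hper$, the condition $r(0)=\delta$), so it does not by itself reduce the nonlinear estimate to~\eqref{ld105}; the uniform multiplicative $1+\Order{\delta}$ control is exactly what the paper's rescaling-plus-Gronwall argument along the exponentially decaying orbit is there to provide.
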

\begin{proof}
Writing $(r,\ph,p_r,p_\ph)=\delta(\bar r,\bar \ph,\bar p_r,\bar p_\ph)$, we can
consider the nonlinear terms as a perturbation of order $\delta$. Since the
solutions we consider decay exponentially, the stable manifold theorem and 
a Gronwall argument allow to bound the effect of nonlinear terms by a
multiplicative error of the form $1+\Order{\delta}$. 
\end{proof}



\section{Continuous-space Markov chains}
\label{sec_mc}


\subsection{Eigenvalues and eigenfunctions}
\label{ssec_mce}

Let $E\subset\R$ be an interval, equipped with the Borel $\sigma$-algebra. 
Consider a Markov kernel 
\begin{equation}
 K(x,\6y) = k(x,y)\6y
\end{equation} 
with density $k$ with respect to Lebesgue measure. We assume $k$ to be
continuous and square-integrable. We allow for $K(x,E)<1$, that
is, the kernel may be substochastic. In that case, we add a cemetery state
$\partial$ to $E$, so that $K$ is stochastic on $E\cup\partial$. 
Given an initial condition $X_0$, the kernel $K$ generates a Markov chain
$(X_0,X_1,\dots)$ via 
\begin{equation}
 \prob{X_{n+1}\in A} = \int_E \prob{X_n\in\6x} K(x,A)\;.
\end{equation} 
We write the natural action of the kernel on bounded measurable functions $f$
as
\begin{equation}
\label{K2} 
 (Kf)(x) \defby \bigexpecin{x}{f(X_1)}
 = \int_E k(x,y)f(y)\6y\;.
\end{equation} 
For a finite signed measure $\mu$ with density $m$, we set 
\begin{equation}
 (\mu K)(\6y) \defby \expecin{\mu}{X_1\in\6y}=
(mK)(y)\6y\;,
\end{equation} 
where 
\begin{equation} 
 (mK)(y) = \int_E m(x)\6x\, k(x,y)\;.
\end{equation} 
We know by the work of Fredholm \cite{Fredholm_1903} that the integral equation 
\begin{equation}
\label{int01} 
 (Kf)(x) - \lambda f(x)= g(x)
\end{equation} 
can be solved for any $g$, if and only if 
$\lambda$ is not an eigenvalue, i.e., the eigenvalue equation 
\begin{equation}
 (Kh)(x) = \lambda h(x)
\end{equation} 
admits no nontrivial solution. All eigenvalues $\lambda$ have finite
multiplicity, and the properly normalised left and right eigenfunctions $h_n^*$
and $h_n$ form a complete orthonormal basis, that is, 
\begin{equation}
\int_E h_n^*(x)h_m(x)\6x = \delta_{nm} 
\qquad\text{and}\qquad 
 \sum_n h_n^*(x)h_n(y) = \delta(x-y)\;.
\end{equation}
Jentzsch~\cite{Jentzsch1912} proved that if $k$ is positive, there exists a
simple eigenvalue $\lambda_0\in(0,\infty)$, which is strictly larger in module than
all other eigenvalues. It is called the \emph{principal eigenvalue}. The
associated eigenfunctions $h_0$ and $h_0^*$ are positive.
Birkhoff~\cite{Birkhoff1957} has obtained the same result under weaker
assumptions on $k$. We call the probability measure 
$\pi_0$ given by 
\begin{equation}
 \pi_0(\6x) = \frac{h^*_0(x)\6x}{\int_E h^*_0(y)\6y}
\end{equation} 
the \emph{quasistationary distribution}\/ of the Markov chain. It describes the
asymptotic distribution of the process conditioned on having survived. 

Given a Borel set $A\subset E$, we introduce the stopping times
\begin{align}
\nonumber
 \tau_A &= \tau_A(x) = \inf\setsuch{t\geqs1}{X_t\in A}\;, \\
 \sigma_A &= \sigma_A(x) = \inf\setsuch{t\geqs0}{X_t\in A}\;,
\end{align}
where the optional argument $x$ denotes the initial condition. 
Observe that $\tau_A(x)=\sigma_A(x)$ if $x\in\Ac$ while
$\sigma_A(x)=0<1\leqs\tau_A(x)$ if $x\in A$. The stopping times $\tau_A$ and
$\sigma_A$ may be infinite because the Markov chain can reach the cemetery
state before hitting $A$ (and, for the moment,  we also don't assume that the
chain conditioned to survive is recurrent). 

Given $u\in\C$, we define the Laplace transforms 
\begin{align}
\nonumber
G^u_A(x) &= \bigexpecin{x}{\e^{u\tau_A}\indexfct{\tau_A<\infty}}\;, \\
H^u_A(x) &= \bigexpecin{x}{\e^{u\sigma_A}\indexfct{\sigma_A<\infty}}\;.
\end{align}
Note that $G^u_A=H^u_A$ in $\Ac$ while $H^u_A=1$ in $A$. The following result is
easy to check by splitting the expectation defining $G^u_A$ according to the
location of $X_{1}$:

\begin{lemma}
\label{lem_mc0}
Let 
\begin{equation}
\label{mc5A}
\gamma(A)  = \sup_{x\in\Ac} K(x,\Ac)
= \sup_{x\in\Ac} \bigprobin{x}{X_1\in\Ac}
\;.
\end{equation}
Then $G^u_A(x)$ is analytic in $u$ for $\re u < \log\gamma(A)^{-1}$, i.e., 
for $\abs{\e^{-u}} > \gamma(A)$, and for these $u$ it satisfies the bound  
\begin{equation}
\label{mc5B}
\sup_{x\in\Ac} \bigabs{G^u_A(x)} \leqs
\frac1{\abs{\e^{-u}}-\gamma(A)}\;.
\end{equation}
\end{lemma}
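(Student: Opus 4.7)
The plan is to expand $G^u_A(x)$ as a power series in $\e^u$ indexed by the value of $\tau_A$, and then bound each coefficient using a simple submultiplicativity estimate derived from the definition of $\gamma(A)$. Specifically, I would first write
\begin{equation*}
G^u_A(x) = \sum_{n=1}^\infty \e^{un}\,\bigprobin{x}{\tau_A = n}\;,
\end{equation*}
noting that every term is an entire function of $u$, so analyticity of the sum will follow once I verify uniform convergence on compacts inside the claimed half-plane.

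The key step is the tail bound: for every $x\in\Ac$ and every $n\geqs 1$,
\begin{equation*}
\bigprobin{x}{\tau_A > n} = \bigprobin{x}{X_1,\dots,X_n\in\Ac} \leqs \gamma(A)^n\;.
\end{equation*}
I would prove this by induction. The case $n=1$ is immediate from the definition~\eqref{mc5A}. For the inductive step, I would use the Markov property at time $1$:
\begin{equation*}
\bigprobin{x}{X_1,\dots,X_{n+1}\in\Ac}
 = \int_{\Ac} K(x,\6y)\,\bigprobin{y}{X_1,\dots,X_n\in\Ac}
 \leqs \gamma(A)\cdot\gamma(A)^n\;,
\end{equation*}
using $x\in\Ac$ so that $K(x,\Ac)\leqs\gamma(A)$, and then applying the induction hypothesis inside the integrand (which requires $y\in\Ac$, which is guaranteed by the domain of integration). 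Since $\probin{x}{\tau_A=n}\leqs\probin{x}{\tau_A\geqs n}\leqs\gamma(A)^{n-1}$, I get
\begin{equation*}
\bigabs{G^u_A(x)} \leqs \sum_{n=1}^\infty \abs{\e^u}^n \gamma(A)^{n-1}
 = \frac{\abs{\e^u}}{1-\abs{\e^u}\gamma(A)}
 = \frac{1}{\abs{\e^{-u}}-\gamma(A)}\;,
\end{equation*}
valid whenever $\abs{\e^u}\gamma(A)<1$, i.e. $\re u<\log\gamma(A)^{-1}$. This bound is uniform in $x\in\Ac$, and its right-hand side is uniformly bounded on any compact subset of the half-plane $\set{\re u<\log\gamma(A)^{-1}}$, so the Weierstrass $M$-test gives uniform convergence of the series on compacts, hence analyticity of $G^u_A(x)$ in $u$.

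There is no real obstacle in this argument; the only thing to be slightly careful about is the distinction between $\tau_A$ and $\sigma_A$ at the first step (which is why the induction is set up with $n$ rather than $n-1$ as the power of $\gamma(A)$), and the observation that the submultiplicative bound genuinely requires $x\in\Ac$ for the statement to hold with exponent $n$ rather than a weaker constant. An alternative, essentially equivalent, presentation would be to recast the identity $G^u_A = \e^u K(\cdot,A) + \e^u K_{\Ac} G^u_A$ (where $K_{\Ac}$ denotes $K$ restricted to $\Ac\times\Ac$) as a fixed-point equation and invert $\id-\e^u K_{\Ac}$ by Neumann series, using $\norm{K_{\Ac}}_{L^\infty\to L^\infty}\leqs\gamma(A)$; this yields the same bound and the same domain of analyticity.
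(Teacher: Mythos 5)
Your argument is correct and is essentially the proof the paper has in mind: the paper's one-line justification (splitting the expectation defining $G^u_A$ according to the location of $X_1$) is just the first-step form of your expansion over the value of $\tau_A$ — equivalently the Neumann-series reformulation you mention — and both reduce to the geometric series giving the bound $1/(\abs{\e^{-u}}-\gamma(A))$, with analyticity from uniform convergence on compacts. One small imprecision to fix: since the kernel may be substochastic, $\set{\tau_A>n}$ is strictly larger than $\set{X_1,\dots,X_n\in\Ac}$ (killed paths have $\tau_A=\infty$), so the intermediate inequality $\probin{x}{\tau_A\geqs n}\leqs\gamma(A)^{n-1}$ can fail as stated; what you actually need, and what your induction does deliver, is $\probin{x}{\tau_A=n}\leqs\probin{x}{X_1,\dots,X_{n-1}\in\Ac}\leqs\gamma(A)^{n-1}$, because on $\set{\tau_A=n}$ the chain must be alive and in $\Ac$ at times $1,\dots,n-1$, and with this replacement the rest of your argument goes through unchanged.
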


The main interest of the Laplace transforms lies in the following result, which
shows that $H^u_A$ is \lq\lq almost an eigenfunction\rq\rq, if $G^u_A$ varies
little in $A$. 

\begin{lemma}
\label{lem_mc1}
For any $u\in\C$ such that $G^u_A$ and $K^u_A$ exist, 
\begin{equation}
\label{mc6}
KH^u_A = \e^{-u}G^u_A\;.
\end{equation} 
\end{lemma}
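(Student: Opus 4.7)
The plan is to apply the (simple) Markov property at time $1$, exploiting the pathwise identity
\begin{equation*}
\tau_A = 1 + \sigma_A \circ \theta_1\;,
\end{equation*}
where $\theta_1$ denotes the one-step time shift on the Markov chain. This identity is immediate from the definitions: $\tau_A$ is the smallest integer $t\geqs 1$ with $X_t\in A$, which equals $1$ plus the smallest integer $s\geqs 0$ such that $X_{s+1}\in A$; note that this holds uniformly in $x$, in particular in the case $x\in A$ where $\sigma_A(x)=0$ but $\tau_A(x)\geqs 1$. A parallel identity holds for the indicator: $\indexfct{\tau_A<\infty} = \indexfct{\sigma_A\circ\theta_1<\infty}$.

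First I would rewrite, by pulling the constant $\e^u$ outside the expectation,
\begin{equation*}
G^u_A(x)
= \bigexpecin{x}{\e^{u(1+\sigma_A\circ\theta_1)}\indexfct{\sigma_A\circ\theta_1<\infty}}
= \e^u\,\bigexpecin{x}{\e^{u\sigma_A\circ\theta_1}\indexfct{\sigma_A\circ\theta_1<\infty}}\;.
\end{equation*}
Then I would condition on $\cF_1=\sigma(X_0,X_1)$ and invoke the Markov property, which gives
\begin{equation*}
\bigecondin{x}{\e^{u\sigma_A\circ\theta_1}\indexfct{\sigma_A\circ\theta_1<\infty}}{\cF_1}
= H^u_A(X_1)\;.
\end{equation*}
Taking the outer expectation and using the definition~\eqref{K2} of the action of $K$ on bounded measurable functions yields $G^u_A(x)=\e^u(KH^u_A)(x)$, which rearranges to the claim.

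There is no real obstacle: the argument is a routine one-step Markov decomposition, essentially the same splitting-on-$X_1$ strategy mentioned just after Lemma~\ref{lem_mc0}. The only points that deserve a brief check are (i) the validity of splitting the exponential across the identity $\tau_A = 1+\sigma_A\circ\theta_1$, which is immediate since both sides take values in $\N\cup\{\infty\}$ and agree on $\{\tau_A<\infty\}$, and (ii) that all interchanges of expectation and summation are justified by the assumed existence (hence absolute convergence) of $G^u_A$ and $H^u_A$ for the $u$ under consideration. No boundedness of $H^u_A$ near the cemetery state is needed because we consistently restrict to $\set{\sigma_A<\infty}$.
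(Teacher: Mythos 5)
Your argument is correct and is essentially the paper's proof run in the opposite direction: the paper computes $(KH^u_A)(x)$ by splitting on whether $X_1\in A$ or $X_1\in\Ac$, which is exactly your one-step Markov decomposition with the shift identity $\tau_A=1+\sigma_A\circ\theta_1$ packaged so that no case split is needed (note $H^u_A$ vanishes at the cemetery state, so $\expecin{x}{H^u_A(X_1)}=(KH^u_A)(x)$ as you implicitly use). Nothing is missing.
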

\begin{proof}
Splitting according to the location of $X_1$, we get 
\begin{align}
\nonumber
(KH^u_A)(x) 
&= \Bigexpecin{x}{
\bigexpecin{X_1}{\e^{u\sigma_A} \indexfct{\sigma_A<\infty}}  
}  \\
\nonumber
&= 
\Bigexpecin{x}{\indexfct{X_1\in A}
\bigexpecin{X_1}{\e^{u\sigma_A} \indexfct{\sigma_A<\infty}}  
} +
\Bigexpecin{x}{\indexfct{X_1\in \Ac}
\bigexpecin{X_1}{\e^{u\sigma_A} \indexfct{\sigma_A<\infty}}  
} \\
\nonumber
&= 
\bigexpecin{x}{\indexfct{\tau_A=1} 
} +
\bigexpecin{x}{\e^{u(\tau_A-1)} \indexfct{1<\tau_A<\infty}
} \\
&= \bigexpecin{x}{\e^{u(\tau_A-1)} \indexfct{\tau_A<\infty}}
= \e^{-u} G^u_A(x)\;.
\label{mc6:1}
\end{align}
\end{proof}

We have the following relation between an eigenfunction inside and outside $A$. 

\begin{prop}
\label{prop_mc1}
Let $h$ be an eigenfunction of $K$ with eigenvalue $\lambda=\e^{-u}$. Assume
there is a set $A\subset E$ such that 
\begin{equation}
\label{mc9}
\abs{\e^{-u}} > \gamma(A) = \sup_{x\in\Ac}
\bigprobin{x}{X_1\in\Ac}\;. 
\end{equation}
Then 
\begin{equation}
\label{mc10}
h(x) = \bigexpecin{x}{\e^{u\tau_A} h(X_{\tau_A})
\indexfct{\tau_A<\infty}}
\end{equation}
for all $x\in E$. 
\end{prop}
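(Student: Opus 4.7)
The plan is to recognize $N_n \defby \e^{un} h(X_n)$ (with the convention $h(\partial) = 0$) as a complex-valued martingale with respect to the natural filtration $(\cF_n)$ of the Markov chain. Indeed, the Markov property combined with the eigenvalue equation $Kh = \e^{-u}h$ immediately gives
\begin{equation*}
\bigexpecin{x}{N_{n+1} \mid \cF_n} = \e^{u(n+1)}(Kh)(X_n) = \e^{u(n+1)}\e^{-u}h(X_n) = N_n\;.
\end{equation*}
(For complex $u$ one simply applies the argument to the real and imaginary parts separately.)

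I would then apply the optional stopping theorem at the bounded stopping time $\tau_A \wedge n$, yielding
\begin{equation*}
h(x) = \bigexpecin{x}{N_{\tau_A \wedge n}} = \bigexpecin{x}{\e^{u\tau_A} h(X_{\tau_A}) \indexfct{\tau_A \leqs n}} + \bigexpecin{x}{\e^{un} h(X_n) \indexfct{\tau_A > n}}\;,
\end{equation*}
and pass to the limit $n \to \infty$. Continuity and square-integrability of the kernel $k$, together with the eigenvalue equation $h(x) = \e^u \int k(x,y)h(y)\,\6y$, ensure that $h$ is bounded on $E$ by some constant $M$.

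For the remainder term, iterating the defining bound $\bigprobin{y}{X_1 \in \Ac} \leqs \gamma(A)$ valid for $y \in \Ac$ gives $\bigprobin{x}{\tau_A > n} \leqs \gamma(A)^{n-1}$ for every $x \in E$, so that
\begin{equation*}
\bigabs{\bigexpecin{x}{\e^{un} h(X_n) \indexfct{\tau_A > n}}} \leqs \frac{M}{\gamma(A)}\bigpar{\abs{\e^u}\gamma(A)}^n\;,
\end{equation*}
which tends to $0$ precisely because the hypothesis $\abs{\e^{-u}} > \gamma(A)$ is equivalent to $\abs{\e^u}\gamma(A) < 1$. For the first term, Lemma~\ref{lem_mc0} applied with exponent $\re u$ (which still satisfies $\e^{\re u} = \abs{\e^u} < \gamma(A)^{-1}$) guarantees $\bigexpecin{x}{\e^{(\re u)\tau_A}\indexfct{\tau_A < \infty}} < \infty$, and the bound $\abs{h} \leqs M$ then allows dominated convergence to identify the limit as $\bigexpecin{x}{\e^{u\tau_A} h(X_{\tau_A}) \indexfct{\tau_A < \infty}}$, which is~\eqref{mc10}.

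The main delicate point I anticipate is establishing the pointwise boundedness of $h$ on $E$; on a bounded interval this is routine from continuity of the kernel, and in general one may derive it directly from the eigenvalue equation via $\abs{h(x)} \leqs \abs{\e^u}\, \norm{k(x,\cdot)}_2 \norm{h}_2$ once a uniform bound on $\norm{k(x,\cdot)}_2$ is available. The conceptual heart of the argument, however, is the balance encoded by the hypothesis $\abs{\e^{-u}} > \gamma(A)$: it is exactly what is needed to make the geometric decay of $\bigprobin{x}{\tau_A > n}$ dominate the exponential growth of $\e^{un}$.
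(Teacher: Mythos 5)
Your argument is correct, but it follows a genuinely different route from the paper. You exploit the fact that $N_n=\e^{un}h(X_n)$ (with $h(\partial)=0$) is a martingale, stop it at $\tau_A\wedge n$, and kill the remainder term by the geometric bound coming from $\abs{\e^{u}}\gamma(A)<1$; the paper instead works first on $\Ac$, setting up the operator $\cT f(x)=\bigexpecin{x}{\e^u h(X_1)\indexfct{X_1\in A}}+\bigexpecin{x}{\e^u f(X_1)\indexfct{X_1\in\Ac}}$ on the bounded continuous functions on $\Ac$, showing it is a contraction under~\eqref{mc9} whose unique fixed point is $h$, identifying the iterates $\cT^n0$ with the truncated expectations $\bigexpecin{x}{\e^{u\tau_A}h(X_{\tau_A})\indexfct{\tau_A\leqs n}}$, and then extending to $x\in A$ by a one-step computation as in Lemma~\ref{lem_mc1}. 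Both proofs rest on exactly the same mechanism (the eigenvalue relation iterated $n$ times plus geometric decay of the probability of staying in $\Ac$), but your version treats all $x\in E$ simultaneously and replaces the Banach fixed-point setup by optional stopping, at the price of needing $h$ bounded to have integrability of $N_n$ and to control the remainder; note that the paper's proof needs this too (it implicitly takes $h|_{\Ac}$ in the space $\cX$ and needs $x\mapsto\bigexpecin{x}{\e^uh(X_1)\indexfct{X_1\in A}}$ bounded), and on the compact state spaces actually used in the paper boundedness is immediate, so this is not a real gap. One small imprecision: since the kernel is substochastic, paths absorbed in $\partial$ also have $\tau_A=\infty$, so the literal inequality $\bigprobin{x}{\tau_A>n}\leqs\gamma(A)^{n-1}$ can fail; what is true, and what you actually use thanks to the convention $h(\partial)=0$, is that $\set{\tau_A>n,\,X_n\in E}\subset\set{X_1,\dots,X_n\in\Ac}$, whose probability is at most $\gamma(A)^{n-1}$, so your bound on the remainder expectation, and hence the proof, stands.
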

\begin{proof}
The eigenvalue equation can be written in the form 
\begin{equation}
\label{mc10:1}
\e^{-u}h(x) = (Kh)(x) 
= \bigexpecin{x}{h(X_1)\indexfct{X_1\in A}} 
+ \bigexpecin{x}{h(X_1)\indexfct{X_1\in \Ac}}\;.
\end{equation}
Consider first the case $x\in\Ac$. Define a linear operator
$\cT$ on the Banach space $\cX$ of continuous functions $f:\Ac\to\C$
equipped with the supremum norm, by 
\begin{equation}
\label{mc10:2}
(\cT f)(x) = \bigexpecin{x}{\e^uh(X_1)\indexfct{X_1\in A}} 
+ \bigexpecin{x}{\e^uf(X_1)\indexfct{X_1\in \Ac}}\;.
\end{equation}
Is is straightforward to check that under Condition~\eqref{mc9}, $\cT$ is a
contraction. Thus it admits a unique fixed point in $\cX$, which must
coincide with $h$. Furthermore, let $h_n$ be a sequence of functions
in $\cX$ defined by $h_0=0$ and $h_{n+1}=\cT h_n$ for all $n$. Then
one can show by induction that 
\begin{equation}
\label{mc10:3}
h_n(x) = \bigexpecin{x}{\e^{u\tau_A} h(X_{\tau_A})
\indexfct{\tau_A\leqs n}}\;.
\end{equation} 
Since $\lim_{n\to\infty}h_n(x)=h(x)$ for all $x\in\Ac$,
\eqref{mc10} holds for these $x$. 
It remains to show that~\eqref{mc10} also holds for $x\in A$. This follows by a
similar computation as in the proof of Lemma~\ref{lem_mc1}. 
\end{proof}

The following result provides a simple way to estimate the principal eigenvalue
$\lambda_0$. 

\begin{prop}
\label{prop_estimate_l0}
For any $n\geqs1$, and any interval $A\subset E$ with positive Lebesgue
measure, we have 
\begin{equation}
 \label{mc100}
 \biggbrak{\inf_{x\in A} K_{n}(x,A)}^{1/n}
 \leqs \lambda_0 \leqs 
 \biggbrak{\,\sup_{x\in E} K_{n}(x,E)}^{1/n}\;.
\end{equation} 
\end{prop}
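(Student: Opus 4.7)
The plan is to derive both bounds as direct consequences of the eigenvalue equations for the principal right and left eigenfunctions $h_0$ and $h_0^*$, together with the positivity properties guaranteed by the Jentzsch--Birkhoff theory already invoked in the text. Both eigenfunctions are strictly positive on $E$, and iterating gives $K^n h_0 = \lambda_0^n h_0$ and $h_0^* K^n = \lambda_0^n h_0^*$, i.e., in terms of the iterated density $k_n$,
\begin{equation*}
\int_E k_n(x,y) h_0(y)\,\dd y = \lambda_0^n h_0(x),
\qquad
\int_E h_0^*(x) k_n(x,y)\,\dd x = \lambda_0^n h_0^*(y).
\end{equation*}

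For the upper bound, I would pick a point $x^\star \in E$ where $h_0$ essentially attains its supremum (or apply a standard approximation if the sup is not realised), and write
\begin{equation*}
\lambda_0^n h_0(x^\star)
= \int_E k_n(x^\star,y)h_0(y)\,\dd y
\leqs h_0(x^\star) K_n(x^\star,E).
\end{equation*}
Dividing by $h_0(x^\star) > 0$ yields $\lambda_0^n \leqs K_n(x^\star,E) \leqs \sup_{x\in E} K_n(x,E)$, which is the desired inequality. Equivalently, this is the familiar statement that $\lambda_0 \leqs \|K^n\|_{L^\infty \to L^\infty}^{1/n}$ combined with the identity $\|K^n\mathbf{1}\|_\infty = \sup_x K_n(x,E)$ for the positive operator $K^n$.

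For the lower bound, I would integrate the left-eigenfunction equation against $\indicator{A}$ and use Fubini:
\begin{equation*}
\lambda_0^n \int_A h_0^*(y)\,\dd y
= \int_A \int_E h_0^*(x) k_n(x,y)\,\dd x\,\dd y
= \int_E h_0^*(x) K_n(x,A)\,\dd x.
\end{equation*}
Restricting the outer integral to $A$ and using positivity of $h_0^*$ gives
\begin{equation*}
\lambda_0^n \int_A h_0^*(y)\,\dd y
\geqs \int_A h_0^*(x) K_n(x,A)\,\dd x
\geqs \Bigbrak{\inf_{x\in A} K_n(x,A)} \int_A h_0^*(x)\,\dd x.
\end{equation*}
Since $A$ has positive Lebesgue measure and $h_0^* > 0$ is continuous, $\int_A h_0^*\,\dd x > 0$, so dividing gives $\lambda_0^n \geqs \inf_{x\in A} K_n(x,A)$.

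Neither step involves a real obstacle; the only point requiring slight care is the upper bound in the case where $h_0$ does not attain its supremum on $E$, which is handled either by passing to a sup-realising sequence or by invoking the $L^\infty$ operator-norm formulation. Strict positivity of $h_0$ and $h_0^*$, already stated in the discussion preceding the proposition (citing \cite{Jentzsch1912}), is what allows both divisions to be carried out.
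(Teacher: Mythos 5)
Your proof is correct and follows essentially the same route as the paper: the upper bound by evaluating the eigenvalue equation at a maximiser of $h_0$ and bounding $h_0(y)/h_0(x^\star)\leqs 1$, and the lower bound by integrating the left-eigenfunction equation over $A$, restricting to $A$, and dividing out $\int_A h_0^*>0$. The only cosmetic difference is that the paper reduces to $n=1$ via the fact that the principal eigenvalue of $K_n$ is $\lambda_0^n$, whereas you argue directly with the iterated kernel (and you are slightly more careful about whether the supremum of $h_0$ is attained); both come to the same thing.
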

\begin{proof}
Since the principal eigenvalue of $K_{n}$ is equal to $\lambda_0^n$, it suffices
to prove the relation for $n=1$. Let $x^*$ be the point where $h_0(x)$ reaches
its supremum. Then the eigenvalue equation yields 
\begin{equation}
 \label{mc101:1}
 \lambda_0 = \int_E k(x^*,y) \frac{h_0(y)}{h_0(x^*)}\6y
 \leqs K(x^*,E)\;,
\end{equation} 
which proves the upper bound. For the lower bound, we use 
\begin{equation}
 \label{mc101:2}
 \lambda_0 \int_A h^*_0(y)\6y 
 = \int_E h^*_0(x)K(x,A)\6x \geqs \inf_{x\in A}K(x,A) \int_A h^*_0(y)\6y\;,
\end{equation} 
and the integral over $A$ can be divided out since $A$ has positive Lebesgue
measure. 
\end{proof}

The following result allows to bound the spectral gap, between $\lambda_0$ and
the remaining eigenvalues, under slightly weaker assumptions than the uniform
positivity condition used in~\cite{Birkhoff1957}. 

\begin{prop}
\label{prop_spectralgap} 
Let $A$ be an open subset of $E$. 
Assume there exists $m:A\to(0,\infty)$ such that 
\begin{equation}
 m(y) \leqs k(x,y) \leqs L m(y)
 \qquad
 \forall x,y\in A
\end{equation} 
holds with a constant $L$ satisfying $\lambda_0L>1$. 
Then any eigenvalue $\lambda\neq\lambda_0$ of $K$ satisfies 
\begin{equation}
\label{bound_in_A} 
 \abs{\lambda} \leqs 
 \max\biggset{2\gammabar(A) \;,\;
 \lambda_0L - 1 + p_{\text{\rm kill}}(A) + \gammabar(A) 
 \frac{\lambda_0L}
 {\lambda_0L-1}
 \biggbrak{1+\frac{1}{\lambda_0 -\gammabar(A)}}
 }
\end{equation} 
where 
\begin{equation}
 \gammabar(A) = \sup_{x\in E} K(x,\Ac)
 \quad\text{and}\quad
 p_{\text{\rm kill}}(A) = \sup_{x\in A} [1-K(x,E)]
 \;.
\end{equation} 
\end{prop}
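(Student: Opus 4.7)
The plan is to bound an arbitrary right eigenfunction $h$ with eigenvalue $\lambda\neq\lambda_0$, which is automatically orthogonal to the left principal eigenfunction $h_0^*$. If $|\lambda|\leqs 2\gammabar(A)$ the first term in the max already gives the bound, so assume throughout $|\lambda|>2\gammabar(A)$. After a phase rotation $h\mapsto\e^{-\icx\arg h(x^*)}h$ needed for complex $\lambda$, normalise so that $\sup_A|h|=h(x^*)=1$ for some $x^*\in A$.

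An outside-of-$A$ estimate follows from the eigenvalue equation $\lambda h(x)=\int k(x,y)h(y)\,\6y$ for $x\in\Ac$: splitting the integral and using $\int_{\Ac} k(x,y)\,\6y\leqs\gammabar(A)$ yields
\begin{equation*}
 \sup_{\Ac}|h|\leqs\frac{1}{|\lambda|-\gammabar(A)}\;.
\end{equation*}
Next, the sandwich gives $|k(x_1,y)-k(x_2,y)|\leqs(L-1)m(y)$ for $x_1,x_2,y\in A$, while on $\Ac$ we use the trivial bound $|k(x_1,y)-k(x_2,y)|\leqs k(x_1,y)+k(x_2,y)$. Applying the eigenvalue equation for both $x_1,x_2\in A$ and subtracting produces the oscillation upper bound
\begin{equation*}
 |\lambda|\,\osc_A(h)\leqs(L-1)\int_A m(y)\,\6y+2\gammabar(A)\sup_{\Ac}|h|\;.
\end{equation*}

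For a matching lower bound, orthogonality $\int h_0^*h\,\6x=0$ is exploited by rewriting $h(x^*)\int_A h_0^*\,\6x=\int_A h_0^*(x)(h(x^*)-h(x))\,\6x-\int_{\Ac}h_0^*(x)h(x)\,\6x$ and dividing by $\int_A h_0^*\,\6x$, yielding
\begin{equation*}
 1\leqs\osc_A(h)+\sup_{\Ac}|h|\cdot\frac{\int_{\Ac}h_0^*(x)\,\6x}{\int_A h_0^*(x)\,\6x}\;.
\end{equation*}
The mass ratio is controlled by integrating the left eigenvalue equation over $y\in\Ac$: $\lambda_0\int_{\Ac}h_0^*\,\6y=\int h_0^*(x)K(x,\Ac)\,\6x\leqs\gammabar(A)\int h_0^*\,\6x$, giving $\int_{\Ac}h_0^*\,\6x/\int_A h_0^*\,\6x\leqs\gammabar(A)/(\lambda_0-\gammabar(A))$ and hence $\osc_A(h)\geqs 1-\gammabar(A)/[(\lambda_0-\gammabar(A))(|\lambda|-\gammabar(A))]$. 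Finally, applying $k(x,y)\geqs m(y)$ inside $\lambda_0 h_0^*(y)=\int h_0^*(x)k(x,y)\,\6x$ restricted to $x\in A, y\in A$ gives $m(y)\int_A h_0^*\,\6x\leqs\lambda_0 h_0^*(y)$; integration over $y\in A$ yields $\int_A m(y)\,\6y\leqs\lambda_0$, which combined with the upper sandwich and the killing estimate $K(x,E)\geqs 1-p_{\text{kill}}(A)$ for $x\in A$ refines $(L-1)\int_A m(y)\,\6y$ into the precise expression $\lambda_0 L-1+p_{\text{kill}}(A)$ appearing in the bound.

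Plugging the four ingredients into the chain $|\lambda|\,\osc_A(h)\leqs(L-1)\int_A m+2\gammabar(A)\sup_{\Ac}|h|$ and solving for $|\lambda|$ under the standing assumption $|\lambda|>2\gammabar(A)$ yields~\eqref{bound_in_A}. The hypothesis $\lambda_0 L>1$ enters exactly at this algebraic step, making the prefactor $\lambda_0 L/(\lambda_0 L-1)$ positive and finite. The main technical obstacle is the last estimate: extracting $\lambda_0 L-1+p_{\text{kill}}(A)$ rather than a naive $(L-1)\lambda_0$ requires a careful interplay between the two sandwich inequalities and the killing and leakage probabilities. For complex $\lambda$ one works with $\e^{-\icx\arg h(x^*)}h$ throughout and takes real parts where needed; this is purely notational.
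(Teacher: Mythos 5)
Your argument is correct in its individual steps but takes a genuinely different route from the paper. The paper evaluates the eigenvalue equation once, at a maximiser $x_0$ of $\abs{h}$ on $A$, inserting the reference function $\kappa h_0^*$ through orthogonality with the specific choice $\kappa=\lambda_0L/\int_A h_0^*$, and controls $\sup_{\Ac}\abs{h}$ via the excursion representation of Proposition~\ref{prop_mc1} and Lemma~\ref{lem_mc0}. You instead run a Doeblin/oscillation-type argument: the sandwich $m\leqs k\leqs Lm$ bounds $\osc_A(h)$ from above, orthogonality to $h_0^*$ bounds it from below, and you obtain $\sup_{\Ac}\abs{h}\leqs 1/(\abs{\lambda}-\gammabar(A))$ directly from the eigenvalue equation, bypassing Proposition~\ref{prop_mc1} (this presupposes boundedness of $h$ on $\Ac$, but the paper's fixed-point argument does too, and in the application $E$ is compact and $h$ continuous). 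The auxiliary facts $\int_A m\leqs\lambda_0$ and the mass-ratio bound are derived exactly as in the paper.

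The one place you are too quick is the final assembly, which is precisely where the constants of~\eqref{bound_in_A} are decided. First, the killing estimate alone does not refine $(L-1)\int_A m$ into $\lambda_0L-1+p_{\text{\rm kill}}(A)$: what is available is $(L-1)\int_A m\leqs(L-1)\lambda_0\leqs\lambda_0L-1+p_{\text{\rm kill}}(A)+\gammabar(A)$, using $\lambda_0\geqs\inf_{x\in A}K(x,A)\geqs1-p_{\text{\rm kill}}(A)-\gammabar(A)$ from Proposition~\ref{prop_estimate_l0}, so the leakage term enters as well and an extra $\gammabar(A)$ must be absorbed. Second, your tail contribution is $2\gammabar(A)\sup_{\Ac}\abs{h}$ where the paper's analogous term carries only one factor $\gammabar(A)$, so matching the stated bracket is not automatic. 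It does close: writing your combined inequality as $\abs{\lambda}\leqs\alpha+\beta/(\abs{\lambda}-\gammabar(A))$ with $\alpha=(L-1)\lambda_0+\gammabar(A)/(\lambda_0-\gammabar(A))$ and $\beta=\gammabar(A)\bigl[2+\gammabar(A)/(\lambda_0-\gammabar(A))\bigr]$ (use $\abs{\lambda}/(\abs{\lambda}-\gammabar)=1+\gammabar/(\abs{\lambda}-\gammabar)$), applying the same self-improvement step as the paper, $\abs{\lambda}\leqs\alpha+\beta/(\alpha-\gammabar(A))$ with $\alpha-\gammabar(A)\geqs(L-1)\lambda_0\geqs\lambda_0L-1>0$, and then comparing with~\eqref{bound_in_A} using $\lambda_0\leqs1$ (the surplus is exactly $\gammabar(A)(1-\lambda_0)/[(\lambda_0L-1)(\lambda_0-\gammabar(A))]\geqs0$), one lands inside the claimed bound. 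Include this computation; as written, \lq\lq solving for $\abs{\lambda}$ yields~\eqref{bound_in_A}\rq\rq\ hides the only delicate part of the proof.
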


\begin{remark}
Proposition~\ref{prop_estimate_l0} shows that 
$\lambda_0\geqs 1-  p_{\text{\rm kill}}(A) - \gamma(A)$. Thus, if $A$ is chosen in such a way  that
$\gammabar(A)$ and  $ p_{\text{\rm kill}}(A) $ are small, the
bound~\eqref{bound_in_A} reads 
\begin{equation}
 \abs{\lambda} \leqs L - 1 + \bigOrder{\gammabar(A)} + \bigOrder{ p_{\text{\rm
kill}}(A) }\;.
\end{equation} 
\end{remark}

\begin{proof}
The eigenvalue equation for $\lambda$ and orthogonality of the eigenfunctions
yield 
\begin{align}
\lambda h(x) &= \int_E k(x,y) h(y) \6y \;, \\
0 &= \int_E h_0^*(y) h(y) \6y\;.
\end{align}
For any $\kappa>0$ we thus have 
\begin{equation}
 \lambda h(x) = \int_E \bigbrak{\kappa h_0^*(y) - k(x,y)} h(y)\6y\;.
\end{equation} 
Let $x_0$ be the point in $A$ where $\abs{h}$ reaches its supremum. Evaluating
the last equation in $x_0$ we obtain 
\begin{equation}
\label{two_integrals} 
 \abs{\lambda} \leqs \int_A \bigabs{\kappa h_0^*(y) - k(x_0,y)}\6y 
 + \int_{\Ac} \Bigbrak{\kappa h_0^*(y) + k(x_0,y)}
\frac{\abs{h(y)}}{\abs{h(x_0)}} \6y\;.
\end{equation} 
We start by estimating the first integral. Since for all $y\in A$, 
\begin{equation}
 \lambda_0 h^*_0(y) 
 = \int_E h^*_0(x)k(x,y)\6x \geqs 
 m(y) \int_A h^*_0(x)\6x \;,
\end{equation} 
choosing $\kappa =\lambda_0 L (\int_A h^*_0(x)\6x)^{-1}$ allows to remove the
absolute values so that 
\begin{equation}
 \int_A \bigabs{\kappa h_0^*(y) - k(x_0,y)}\6y 
 \leqs \lambda_0 L - K(x_0,A) 
 \leqs \lambda_0 L - 1 + \gammabar(A) + p_{\text{\rm kill}}(A) \;.
\end{equation} 
From now on, we assume $\abs{\lambda}\geqs2\gammabar(A)$, since otherwise there
is
nothing to show. 
In order to estimate the second integral in~\eqref{two_integrals}, we first use
Proposition~\ref{prop_mc1} with $\e^{-u}=\lambda$ and Lemma~\ref{lem_mc0} to
get for all $x\in\Ac$ 
\begin{equation}
 \abs{h(x)} \leqs
\Bigexpecin{x}{\e^{(\re u)\tau_A}\bigabs{h(X_{\tau_A})}\indexfct{\tau_A<\infty}}
\leqs \frac{\abs{h(x_0)}}{\abs{\lambda}-\gammabar(A)}\;.
\end{equation} 
The second integral is thus bounded by 
\begin{equation}
 \frac{1}{\abs{\lambda}-\gammabar(A)} 
 \int_{\Ac} \Bigbrak{\kappa h_0^*(y) + k(x_0,y)} \6y
 \leqs
 \frac{1}{\abs{\lambda}-\gammabar(A)} 
 \Biggbrak{\lambda_0L 
 \frac{\displaystyle\int_{\Ac} h_0^*(y)\6y}{\displaystyle\int_A h_0^*(y)\6y}
 + \gammabar(A)}\;.
\end{equation} 
Now the eigenvalue equation for $\lambda_0$ yields 
\begin{equation}
 \lambda_0 \int_{\Ac} h_0^*(y)\6y
 = \int_E h_0^*(x)K(x,\Ac)\6x \leqs \gammabar(A) \int_E h_0^*(x)\6x\;.
\end{equation} 
Hence the second integral can be bounded by 
\begin{equation}
 \frac{1}{\abs{\lambda}-\gammabar(A)} 
 \biggbrak{\lambda_0L \frac{\gammabar(A)}{\lambda_0-\gammabar(A)} +
\gammabar(A)}\;.
\end{equation} 
Substituting in~\eqref{two_integrals}, we thus get 
\begin{equation}
 \abs{\lambda} \leqs \lambda_0 L - 1 + \gammabar(A) + p_{\text{\rm kill}}(A)
 + \frac{\gammabar(A)}{\abs{\lambda}-\gammabar(A)} 
 \biggbrak{1 + \frac{\lambda_0L}{\lambda_0-\gammabar(A)}}\;.
\end{equation} 
Now it is easy to check the following fact: Let $\abs{\lambda}, \alpha, \beta,
\gammabar$ be positive numbers such that $\alpha, \abs{\lambda} > \gammabar$.
Then 
\begin{equation}
 \abs{\lambda} \leqs \alpha + \frac{\beta}{\abs{\lambda}-\gammabar}
 \quad
 \Rightarrow
 \quad
 \abs{\lambda} \leqs \alpha + \frac{\beta}{\alpha-\gammabar}\;.
\end{equation} 
This yields the claimed result. 
\end{proof}


\subsection{Harmonic measures}
\label{ssec_mhm}

Consider an SDE in $\R^2$ given by 
\begin{equation}
\label{SDE_hm} 
 \6x_t = f(x_t)\6t + \sigma g(x_t)\6W_t\;,
\end{equation} 
where $(W_t)_t$ is a standard $k$-dimensional Brownian motion, $k\geqs 2$, on
some probability space 
$(\Omega,\cF,\fP)$. 
We denote by 
\begin{equation}
 \cL = \sum_{i=1}^2 f_i \dpar{}{x_i} 
 + \frac{\sigma^2}{2} \sum_{i,j=1}^2
\bigpar{g\transpose{g}}_{ij}\dpar{^2}{x_i\partial
x_j} 
\end{equation}
the infinitesimal generator of the associated diffusion. 
Given a bounded open set $\cD\subset\R^2$ with Lipschitz boundary
$\partial\cD$, we are interested in properties of the first-exit
location $x_\tau\in\partial\cD$, where 
\begin{equation}
 \tau = \tau_\cD = \inf\setsuch{t>0}{x_t\not\in\cD}
\end{equation} 
is the first-exit time from $\cD$. 
We will assume that $f$ and $g$ are uniformly bounded in $\overline\cD$, and
that $g$ is uniformly elliptic in $\overline\cD$. 
Dynkin's formula and Riesz's representation theorem imply the existence of a 
\defwd{harmonic measure}\/ $H(x,\6y)$, such that  
\begin{equation}
 \bigprobin{x}{x_\tau \in B}
 = \int_B H(x,\6y) 
\end{equation} 
for all Borel sets $B\subset\partial\cD$. Note that $x\mapsto H(x,\6y)$ is 
$\cL$-harmonic, i.e., it satisfies $\cL H=0$ in $\cD$. The uniform ellipticity
assumption implies that for all $x\in\cD$, 
\begin{equation}
 H(x,\6y) = h(x,y)\6y
\end{equation} 
admits a density $h$ with respect to the arclength (one-dimensional surface
measure) $\6y$, which is smooth wherever the boundary is smooth. This has been
shown, e.g., in \cite{BenArous_Kusuoka_Stroock_1984} (under a weaker
hypoellipticity condition). 

We now derive some bounds on the magnitude of oscillations of $h$, based on
Harnack inequalities.

\begin{lemma}
\label{lem_Harnack0}
For any set $\cD_{0}$ such that its closure satisfies $\overline\cD_{0}\subset\cD$, there exists a constant $C$, independent of $\sigma$, such that 
\begin{equation}
 \label{Harnack0}
 \frac{\displaystyle\sup_{x\in\cD_{0}} h(x,y)}
 {\displaystyle\inf_{x\in\cD_{0}} h(x,y)}
 \leqs \e^{C/\sigma^2}
 \end{equation}
holds for all $y\in\partial\cD$. 
\end{lemma}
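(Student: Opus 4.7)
The strategy is to exploit that, for each fixed $y \in \partial\cD$, the density $x \mapsto h(x,y)$ is a positive $\cL$-harmonic function on $\cD$, and then to combine a local Harnack inequality with a chain-covering argument. The $\cL$-harmonicity of $h(\cdot,y)$ follows, as in \cite{BenArous_Kusuoka_Stroock_1984}, from the $\cL$-harmonicity of $x \mapsto H(x,B)$ for Borel sets $B \subset \partial\cD$ together with the regularity of the density established there. The only delicate point of the argument is to track how the Harnack constant depends on $\sigma$.

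To make the $\sigma$-dependence transparent, divide $\cL u = 0$ through by $\sigma^2/2$. This gives the equivalent equation $\widetilde{\cL} u = 0$ with
\[
 \widetilde{\cL} = \sum_{i,j=1}^{2} \bigpar{g\transpose{g}}_{ij} \sdpar{^2}{x_i\partial x_j} + \frac{2}{\sigma^2} \sum_{i=1}^{2} f_i \sdpar{}{x_i}\;.
\]
The second-order part of $\widetilde{\cL}$ is uniformly elliptic with constants $c_1, c_2$ independent of $\sigma$ by~\eqref{ellipticity}, while the drift coefficients have $L^{\infty}(\cD)$ norm of order $1/\sigma^2$. Applying the classical Moser or Krylov--Safonov Harnack inequality to positive solutions of $\widetilde{\cL} u = 0$ on any ball $B_{2r}(x_0) \subset \cD$ yields
\[
 \sup_{B_r(x_0)} u \leqs \exp\biggpar{\frac{C_1 r}{\sigma^2}} \inf_{B_r(x_0)} u\;,
\]
with a constant $C_1$ depending only on $c_1, c_2$ and $\sup_{\cD} \norm{f}$, but neither on $\sigma$, $y$, nor $r$ in a bounded range.

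Since $\overline{\cD_0}$ is a compact subset of the open set $\cD$, one can fix $r_0 > 0$ so small that $B_{2r_0}(x) \subset \cD$ for every $x \in \overline{\cD_0}$, and cover $\overline{\cD_0}$ by a finite chain of $N = N(\cD_0,r_0)$ balls $B_{r_0}(x_k)$, each overlapping the next. Iterating the local Harnack estimate along the chain gives
\[
 \frac{\sup_{x \in \cD_0} h(x,y)}{\inf_{x \in \cD_0} h(x,y)} \leqs \exp\biggpar{\frac{N C_1 r_0}{\sigma^2}} \leqs \e^{C/\sigma^2}
\]
uniformly in $y \in \partial\cD$ and in $\sigma$, with $C = N C_1 r_0$, as claimed. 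The essential obstacle, and the whole point of the rescaling, is that once the leading symbol of the operator is normalised to be $\sigma$-independent, the drift has sup-norm of order $1/\sigma^2$, and it is well known that the Harnack constant for a second-order uniformly elliptic operator with drift grows exponentially in the product of the drift's sup-norm and the working radius, which is exactly what produces the exponent $C/\sigma^2$.
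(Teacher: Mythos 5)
Your proposal is correct and follows the same overall strategy as the paper: for fixed $y$, treat $x\mapsto h(x,y)$ as a positive $\cL$-harmonic function and propagate a Harnack inequality along a chain of balls, with the exponent $C/\sigma^2$ coming from the drift-to-diffusion ratio. The difference is where the $\sigma$-dependence is absorbed. You chain finitely many balls of a fixed radius $r_0$ and invoke, as \lq\lq well known\rq\rq, a quantitative Harnack inequality whose constant is $\exp\bigl(C_1\,\norm{b}_\infty r/\lambda\bigr)$ for bounded drift $b$; this dependence is indeed sharp (consider $u(x)=\e^{-Mx_1}$ for $\Delta u+M\partial_1u=0$), but it is not what the standard reference states: Corollary 9.25 in Gilbarg--Trudinger only asserts a constant depending on the scale-invariant quantity $\nu R^2$, without an explicit exponential form, so your one-line appeal is the soft spot of the argument. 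The paper instead applies that standard statement on balls of radius $R=\sigma^2$, for which $\nu R^2$ (with $\nu$ an upper bound on $(\norm{f}/\sigma^2)^2$) is of order one, hence the per-ball constant $C_0$ is $\sigma$-independent, and then chains $N=\intpartplus{\norm{x_2-x_1}/\sigma^2}$ such balls to get $C_0^N\leqs\e^{C/\sigma^2}$. In effect, the quantitative bound you quote is exactly what this $\sigma^2$-scale chaining proves, so if you do not have a citable reference for the explicit exponential dependence, the fix is simply to perform the chaining at radius $\sigma^2$ as the paper does; in both versions one should also note (as the paper implicitly does) that the chain of balls must stay inside a compact subset of $\cD$ connecting the points of $\cD_0$.
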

\begin{proof}
Let $\cB$ be a ball of radius $R=\sigma^2$ contained in $\cD_{0}$. 
By~\cite[Corollary 9.25]{Gilbarg_Trudinger}, we have for any $y\in\partial\cD$ 
\begin{equation}
\label{Harnack0:1} 
 \sup_{x\in\cB} h(x,y) \leqs C_0 \inf_{x\in\cB} h(x,y)\;,
\end{equation} 
where the constant $C_0\geqs1$ depends only on the ellipticity
constant of $g$ and on $\nu R^2$, where the parameter $\nu$ is an upper bound
on $(\norm{f}/\sigma^2)^2$. Since $R=\sigma^2$, $C_0$ does not depend
on~$\sigma$. Consider now two points $x_1,x_2\in\cD$. They can be joined by a
sequence of $N=\intpartplus{\norm{x_2-x_1}/\sigma^2}$ overlapping balls of
radius $\sigma^2$. Iterating the bound~\eqref{Harnack0:1}, we obtain 
\begin{equation}
 h(x_2,y) \leqs C_0^N h(x_1,y) 
 = \e^{(\log C_0) \intpartplus{\norm{x_2-x_1}/\sigma^2}} h(x_1,y)\;,
\end{equation} 
which implies the result. 
\end{proof}

\begin{lemma}
\label{lem_Harnack} 
Let $\cB_r(x)$ denote the ball of radius $r$ centred in $x$, and let $\cD_{0}$ be such that its closure satisfies $\overline\cD_{0}\subset\cD$. Then, 
for any $x_0\in\cD_{0}, y\in\partial\cD$ and $\eta>0$, one can find a constant
$r=r(y,\eta)$, independent of $\sigma$, such that 
\begin{equation}
 \label{Harnack}
 \sup_{x\in\cB_{r\sigma^2}(x_0)} h(x,y)
 \leqs (1+\eta) \inf_{x\in\cB_{r\sigma^2}(x_0)} h(x,y)\;.
 \end{equation} 
\end{lemma}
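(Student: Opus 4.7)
The plan is to sharpen the argument of Lemma~\ref{lem_Harnack0} by rescaling the problem to a $\sigma$-independent uniformly elliptic PDE, then applying a quantitative Hölder/oscillation estimate instead of a naïve chain of Harnack inequalities.

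\textbf{Step 1: Rescaling.} I would introduce new coordinates $\tilde x = (x-x_0)/\sigma^2$ and set $v(\tilde x) = h(x_0+\sigma^2\tilde x,y)$. A direct computation shows that $\cL h(\cdot,y)=0$ becomes
\begin{equation*}
 \widetilde \cL v \defby \sum_i f_i(x_0+\sigma^2\tilde x)\,\sdpar{v}{\tilde x_i}
 + \frac{1}{2}\sum_{ij}\bigpar{g\transpose{g}}_{ij}(x_0+\sigma^2\tilde x)\,\sdpar{^2 v}{\tilde x_i\,\partial \tilde x_j} = 0
\end{equation*}
on the rescaled ball $\tilde\cB_\rho=\{|\tilde x|<\rho\}$, which corresponds to $\cB_{\rho\sigma^2}(x_0)$. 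The coefficients of $\widetilde\cL$ are uniformly bounded and uniformly elliptic on $\tilde\cB_\rho$ with constants independent of $\sigma$, because they are simply the original bounded, uniformly elliptic coefficients evaluated on a set of diameter $\rho\sigma^2\to0$. Provided $\sigma$ is small enough that $\cB_{\rho\sigma^2}(x_0)\subset\cD$ (which is guaranteed by $\overline{\cD_0}\subset\cD$), the function $v$ is a positive $\widetilde\cL$-harmonic function on $\tilde\cB_\rho$.

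\textbf{Step 2: Quantitative Hölder estimate.} Since $\widetilde\cL$ is non-divergence form uniformly elliptic with bounded drift, the Krylov--Safonov (alternatively De~Giorgi--Nash--Moser) estimate applies: any nonnegative solution on $\tilde\cB_1$ lies in $C^\alpha(\tilde\cB_{1/2})$ with
\begin{equation*}
 \osc_{\tilde\cB_r} v \leqs C\,(2r)^\alpha \sup_{\tilde\cB_{1/2}} v
 \qquad \forall r\in(0,1/4]\;,
\end{equation*}
where $\alpha\in(0,1)$ and $C$ depend only on the ellipticity constants and on $\|f\|_\infty$, hence are independent of $\sigma$. Combining with the standard Harnack inequality $\sup_{\tilde\cB_{1/2}} v \leqs C_{\rm H}\inf_{\tilde\cB_{1/2}} v$ yields
\begin{equation*}
 \osc_{\tilde\cB_r} v \leqs C C_{\rm H}(2r)^\alpha \inf_{\tilde\cB_r} v\;,
\end{equation*}
so that $\sup_{\tilde\cB_r} v \leqs (1 + C'r^\alpha)\inf_{\tilde\cB_r} v$ with $C'=2^\alpha C C_{\rm H}$.

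\textbf{Step 3: Conclusion.} Choosing $r$ so small that $C' r^\alpha \leqs \eta$ gives the claim once we translate back to the original variables, since $\tilde\cB_r$ corresponds to $\cB_{r\sigma^2}(x_0)$. The constant $r=r(\eta)$ depends only on $\eta$ and on the $\sigma$-independent elliptic data (and implicitly on $y$ through the nonnegativity of $h(\cdot,y)$, which is automatic). The only mild technical point to watch is that the rescaled equation's coefficients are evaluated at $x_0+\sigma^2\tilde x$ rather than at a fixed point; they still satisfy the same uniform ellipticity bounds and $L^\infty$ bound as the original $f,g\transpose g$, so the Krylov--Safonov constants do not degenerate. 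The main obstacle is simply invoking the right form of the Hölder estimate with $\sigma$-independent constants under the rescaling — everything else is a routine rearrangement of Harnack's inequality.
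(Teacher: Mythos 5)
Your proof is correct and takes essentially the same route as the paper's: an interior H\"older oscillation-decay estimate combined with the Harnack inequality on the larger ball of radius $\sim\sigma^2$, with the only (cosmetic) difference that you obtain $\sigma$-independent constants by explicitly rescaling space by $\sigma^2$, whereas the paper works in the original variables and notes that the Gilbarg--Trudinger constants depend only on the product $\nu R^2\sim(\norm{f}R/\sigma^2)^2$, which stays bounded for $R\leqs\sigma^2$. Nothing further is needed.
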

\begin{proof}
Let $r_0$ be such that $\cB_{r_0\sigma^2}(x_0)\subset\cD_{0}$, and write
$R_0=r_0\sigma^2$. Since $h$ is harmonic and positive, we can apply the
Harnack estimate~\cite[Corollary 9.24]{Gilbarg_Trudinger}, showing that for any
$R\leqs R_0$, 
\begin{equation}
 \osc_{\cB_R(x_0)} h 
 \defby \sup_{x\in\cB_R(x_0)} h(x,y) - \inf_{x\in\cB_R(x_0)} h(x,y)
 \leqs C_1 \biggpar{\frac{R}{R_0}}^\alpha \osc_{\cB_{R_0}(x_0)} h\;,
\end{equation} 
where, as in the previous proof, the constants $C_1\geqs1$ and $\alpha>0$ do not
depend on $\sigma$. By~\cite[Corollary 9.25]{Gilbarg_Trudinger}, we also have 
\begin{equation}
 \sup_{x\in\cB_{R_0}(x_0)} h(x,y) \leqs 
 C_2 \inf_{x\in\cB_{R_0}(x_0)} h(x,y)\;,
\end{equation} 
where again $C_2>1$ does not depend on $\sigma$. Combining the two
estimates, we obtain 
\begin{equation}
 \frac{\displaystyle\sup_{x\in\cB_R(x_0)}h(x,y)}
 {\displaystyle\inf_{x\in\cB_R(x_0)} h(x,y)} -1 
 \leqs \frac{\displaystyle \osc_{\cB_R(x_0)} h}{\displaystyle
\inf_{x\in\cB_{R_0}(x_0)} h(x,y)}
 \leqs C_1 \biggpar{\frac{R}{R_0}}^\alpha (C_2 - 1)\;.
\end{equation} 
The result thus follows by taking $r=R/\sigma^2$, where 
$R=R_0[\eta/(C_1(C_2-1))]^{1/\alpha}$. 
\end{proof}


\subsection{Random Poincar\'e maps}
\label{ssec_mpm}

Consider now an SDE of the form~\eqref{SDE_hm}, where $x=(\ph,r)$ and $f$ and
$g$ are periodic in $\ph$, with period $1$. Consider the domain 
\begin{equation}
\label{rpm01} 
 \cD = \bigsetsuch{(\ph,r)}{-M<\ph<1\;,\; a<r<b}\;,
\end{equation} 
where $a<b$, and $M$ is some (large) integer. We have in mind drift terms with
a positive $\ph$-component, so that sample paths are very unlikely to leave
$\cD$ through the segment $\ph=-M$. 

Given an initial condition $x_0=(0,r_0)\in\cD$, we can define 
\begin{equation}
\label{rpm02} 
 k(r_0,r_1) = h((0,r_0),(1,r_1))\;,
\end{equation} 
where $h(x,y)\6y$ is the harmonic measure. Then by periodicity of $f$ and $g$
and the strong Markov property, $k$ defines a Markov chain on $E=[a,b]$, keeping
track of the value $R_n$ of $r_t$ whenever $\ph_t$ first reaches $n\in\N$. This
Markov chain is substochastic because we only take into account paths reaching
$\ph=1$ before hitting any other part of the boundary of $\cD$. In other words,
the Markov chain describes the process killed upon $r_t$ reaching $a$ or $b$
(or $\ph_t$ reaching $-M$). 

We denote by $K_n(x,y)$ the $n$-step transition kernel, and by $k_n(x,y)$ its
density. Given an interval $A\subset E$, we write $K^{A}_n(x,y)=K_{n}(x,y)/K_{n}(x,A)$ for the $n$-step transition kernel for the Markov chain conditioned to stay in $A$, and $k^{A}_n(x,y)$ for the corresponding density.

\begin{prop}
\label{prop_positivity} 
Fix an interval $A\subset E$. 
For $x_1,x_2\in A$ define the integer stopping time 
\begin{equation}
 \label{rpm03} 
 N = N(x_1,x_2) =
\inf\bigsetsuch{n\geqs1}{\abs{X^{x_2}_n-X^{x_1}_n}<r_\eta\sigma^2}\;,
\end{equation} 
where $r_\eta=r(y,\eta)$ is the constant of Lemma~\ref{lem_Harnack} and $X^{x_{0}}_{n}$ denotes the Markov chain with transition kernel $K^{A}(x,y)$ and initial condition $x_{0}$.  The two Markov chains $X^{x_{1}}_{n}$ and $X^{x_{2}}_{n}$ are coupled in the sense that their dynamics is derived from the same realization of the Brownian motion, cf.~\eqref{SDE_hm}.

Let 
\begin{equation}
\rho_n = \sup_{x_1,x_2\in A}
\bigprob{N(x_1,x_2) > n }\;. 
\end{equation}
Then for any $n\geqs2$ and $\eta>0$, 
\begin{equation}
\label{rpm04} 
 \frac{\displaystyle\sup_{x\in A} k^{A}_n(x,y)}
 {\displaystyle\inf_{x\in A} k^{A}_n(x,y)}
 \leqs 1 + \eta + {\rho_{n-1}\e^{C/\sigma^2}}
\end{equation} 
holds for all $y\in A$, where $C$ does not depend on $\sigma$. 

\end{prop}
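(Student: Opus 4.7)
The strategy is to combine a coupling of two chains starting from different points with the Harnack inequalities of Lemmas~\ref{lem_Harnack0} and~\ref{lem_Harnack}. Fix $y\in A$ and let $x_1,x_2\in A$ be (eventually) the maximiser and minimiser of $k^A_n(\cdot,y)$ over $A$. The coupled chains $X^{x_1}$ and $X^{x_2}$ appearing in the definition of $N=N(x_1,x_2)$ are driven by the same realisation of the Brownian motion, so on the event $\{N\leqs n-1\}$ the two chains lie within distance $r_\eta \sigma^2$ by time $n-1$, while on $\{N>n-1\}$ they may still be far apart.

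The first step is to write a Chapman--Kolmogorov-type decomposition expressing $k^A_n(x,y)$ as an average of the one-step conditional density $k^A_1(\cdot,y)=k(\cdot,y)/K(\cdot,A)$ taken along the law of the conditioned chain at time $n-1$. Since $k(z,y)=h((0,z),(1,y))$ is the harmonic-measure density, Lemmas~\ref{lem_Harnack0} and~\ref{lem_Harnack} apply directly to $k(\cdot,y)$; integrating the pointwise bounds in $y$ shows that the normalisation $K(\cdot,A)$ obeys analogous inequalities, so the same Harnack bounds transfer to $k^A_1(\cdot,y)$ up to an innocent modification of the constants $C$ and $\eta$.

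Next, I would decompose
\begin{equation*}
 k^A_n(x_1,y) = \Bigexpec{k^A_1(X^{x_1}_{n-1},y)\indexfct{N\leqs n-1}} + \Bigexpec{k^A_1(X^{x_1}_{n-1},y)\indexfct{N>n-1}}
\end{equation*}
and bound each piece. On $\{N\leqs n-1\}$, Lemma~\ref{lem_Harnack} yields $k^A_1(X^{x_1}_{n-1},y)\leqs(1+\eta)\,k^A_1(X^{x_2}_{n-1},y)$, so the first term is at most $(1+\eta)\,k^A_n(x_2,y)$. On $\{N>n-1\}$, whose probability is bounded by $\rho_{n-1}$, Lemma~\ref{lem_Harnack0} gives the crude estimate $k^A_1(X^{x_1}_{n-1},y)\leqs \e^{C/\sigma^2}\inf_{z\in A}k^A_1(z,y)\leqs \e^{C/\sigma^2}k^A_n(x_2,y)$, where the last inequality uses that $k^A_n(x_2,y)$ is itself a weighted average of $k^A_1(\cdot,y)$ over $A$ and hence dominates its infimum. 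Summing the two contributions produces $k^A_n(x_1,y)\leqs (1+\eta+\rho_{n-1}\e^{C/\sigma^2})\,k^A_n(x_2,y)$, which is exactly the claimed ratio bound.

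The main technical obstacle is that the chains $X^{x_1}$ and $X^{x_2}$ are conditioned on staying in $A$, and the conditioning events depend on the starting point; coupling them directly via a common Brownian motion therefore requires care. The cleanest route is probably to run the argument first for the \emph{killed} chain (with sub-probability kernel $K$), where the coupling is straightforward and the Harnack inequalities are stated without modification, and only then to reintroduce the normalisation factors $K_{n}(x_i,A)$, checking that their ratio is itself controlled by a factor compatible with the Harnack constant so that the final bound is not spoiled.
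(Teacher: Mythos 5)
Your overall architecture (split according to whether the coupling time $N$ of \eqref{rpm03} has occurred, control the far-apart event by $\rho_{n-1}$ together with Lemma~\ref{lem_Harnack0}, and get the matching lower bound by writing $k^A_n(x_2,y)$ as an average of the one-step density) is the same as the paper's, but your bound for the first term has a genuine gap. The event $\set{N\leqs n-1}$ only guarantees that the two coupled chains have been within distance $r_\eta\sigma^2$ of each other at \emph{some} time $k\leqs n-1$; it does not imply $\abs{X^{x_1}_{n-1}-X^{x_2}_{n-1}}<r_\eta\sigma^2$, and driving both chains by the same realisation of the Brownian motion gives no persistence of closeness after the first meeting time (in this setting paths can separate again, e.g.\ exponentially fast near the unstable orbit). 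Consequently the pointwise inequality $k^A_1(X^{x_1}_{n-1},y)\leqs(1+\eta)\,k^A_1(X^{x_2}_{n-1},y)$ on $\set{N\leqs n-1}$, which is what you use to dominate the first term by $(1+\eta)\,k^A_n(x_2,y)$, is unjustified: on realisations where the chains meet early and then drift apart, the ratio of the two one-step densities can be as large as $\e^{C/\sigma^2}$, and nothing in your estimate absorbs this.

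The missing ingredient is precisely the step your decomposition skips: condition on the value $N=k$, $k=1,\dots,n-1$, and apply the Markov property at time $k$. Given $N=k$ and the positions $(z_1,z_2)=(X^{x_1}_k,X^{x_2}_k)$, which do satisfy $\abs{z_2-z_1}<r_\eta\sigma^2$, the contribution to $\bigprob{X^{x_1}_n\in B}$ is $\bigprob{X^{z_1}_{n-k}\in B}$, and this is compared with $\bigprob{X^{z_2}_{n-k}\in B}$ by applying Lemma~\ref{lem_Harnack} to the positive harmonic function $z\mapsto\bigprob{X^{z}_{n-k}\in B}$ of the \emph{initial} point, evaluated at the two nearby points $z_1,z_2$; no property of the coupled trajectories after time $k$ is needed. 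Resumming over $k$ gives the first term $\leqs(1+\eta)\bigprob{X^{x_2}_n\in B}$, after which your treatment of the remaining term (probability $\rho_{n-1}$ times the one-step ratio bounded by $\e^{C/\sigma^2}$ via Lemma~\ref{lem_Harnack0}) and the lower bound $k^A_n(x,y)\geqs\inf_{z\in A}k^A_1(z,y)$ combine exactly as in the paper. Your concluding remark about first running the argument for the killed chain and then reinstating the normalisation $K_n(\cdot,A)$ is a sensible way to handle the conditioning, but it does not repair this coupling issue, which is the substantive obstacle.
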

\begin{proof}
We decompose 
\begin{align}
\label{rpm05:1}
\nonumber 
 \bigprob{X^{x_1}_n\in\6y}
{} &{}=\sum_{k=1}^{n-1} \bigprob{X^{x_1}_n\in\6y\mid N=k}\bigprob{N=k} \\
&\phantom{{}={}}{} + \bigprob{X^{x_1}_n\in\6y\mid N>n-1} \bigprob{N>n-1 }\;.
\end{align} 
Let $k_{n}^{(2)}((x_{1},x_{2}),(z_{1},z_{2})\mid N=k)$ denote the conditional joint density for a transition for $(X^{x_{1}}_{l},X^{x_{2}}_{l})$ in $n$  steps from $(x_{1},x_{2})$ to $(z_{1},z_{2})$, given $N=k$. Note that this density is concentrated on the set $\set{\abs{z_{2}-z_{1}}< r_{\eta}\sigma^{2}}$.
For $k=1,\dots,n-1$ and any measurable $B\subset E$, we use Lemma~\ref{lem_Harnack} to estimate
\begin{align}
\nonumber
 &\bigprob{X^{x_1}_n\in B\mid N=k} \\
 \nonumber
 &\qquad{}= 
 \int_A\int_A \bigprob{X^{z_1}_{n-k}\in B} k_{n}^{(2)}((x_{1},x_{2}),(z_{1},z_{2})\mid N=k)\,\6z_{2}\6z_{1}
\\
\nonumber
 &\qquad{}\leqs 
(1+\eta)  \int_{A}\int_A \bigprob{X^{z_2}_{n-k}\in B} k_{n}^{(2)}((x_{1},x_{2}),(z_{1},z_{2})\mid N=k)\,\6z_{2}\6z_{1}
\\
&\qquad{}= (1+\eta)  \bigprob{X^{x_2}_n\in B\mid N=k} \;.
 \label{rpm05:2} 
\end{align} 
Writing $k_{n-1}(x_{1},z_{1}\mid N>n-1)$ for the conditional $(n-1)$-step
transition density of $X^{x_{1}}_{l}$,
the last term in~\eqref{rpm05:1} can be bounded by 
\begin{align}
 \label{rpm05:3}
 \nonumber
\bigprob{X^{x_{1}}_{n}\in B\mid N>n-1}
\nonumber
 {}&{} = 
 \int_A \bigprob{X^{z_{1}}_{1}\in B} k_{n-1}(x_{1},z_{1}\mid N>n-1) \,\6z_{1} \\
  \nonumber
 {}&{} \leqs \sup_{z_{1}\in A} \bigprob{X^{z_{1}}_{1}\in B} \bigprob{X^{x_{1}}_{n-1}\in E \mid N>n-1} \\
{}&{}\leqs \sup_{z_{1}\in A} \bigprob{X^{z_{1}}_{1}\in B}
\;.
\end{align} 
We thus have 
\begin{equation}
\label{rpm05:4}
\bigprob{X^{x_1}_n\in\6y} \leqs (1+\eta) \bigprob{X^{x_2}_n\in\6y}
+ \rho_{n-1} \sup_{z_1\in A}\bigprob{X^{z_1}_1\in\6y}\;.
\end{equation}
On the other hand, we have 
\begin{equation}
 \bigprob{X^{x_1}_n\in\6y} \geqs \bigprob{X^{x_1}_{n-1}\in A}
 \inf_{z_1\in A}\bigprob{X^{z_1}_1\in\6y}\;.
\end{equation} 
Combining the upper and lower bound, we get 
\begin{equation}
\frac{\displaystyle\sup_{x\in A} k^{A}_n(x,y)}
 {\displaystyle\inf_{x\in A} k^{A}_n(x,y)}
 \leqs 1 + \eta + {\rho_{n-1}}\,
 \frac{\displaystyle\sup_{z\in A} k^{A}(z,y)}
 {\displaystyle\inf_{z\in A} k^{A}(z,y)}
\;.
\end{equation} 
Hence the result follows from Lemma~\ref{lem_Harnack0}. 
\end{proof}


\section{Sample-path estimates}
\label{sec_sp}


\subsection{The principal eigenvalue $\lOu$}
\label{ssec_lu}

We consider in this section the system 
\begin{align}
\nonumber
\6r_t &= \bigbrak{\lambda_{+} r_t + b_r(r_t,\ph_t)} \6t 
+ \sigma g_r(r_t,\ph_t) \6W_t\;, \\
\6\ph_t &= \Bigbrak{\frac{1}{T_+} + b_\ph(r_t,\ph_t)} \6t 
+ \sigma g_\ph(r_t,\ph_t) \6W_t\;,
\label{lu01} 
\end{align}
describing the dynamics near the unstable orbit. We have redefined $r$ in
such a way that the unstable orbit is located in $r=0$, and that the stable
orbit lies in the region $\set{r>0}$. Here $\set{W_t}_t$ is a
$k$-dimensional standard Brownian motion, $k\geqs 2$, and $g=(\transpose{g_r},
\transpose{g_\ph})$
satisfies a
uniform ellipticity condition. The functions $b_r$, $b_\ph$, $g_r$ and $g_\ph$
are periodic in $\ph$ with period~$1$ and the nonlinear drift terms
satisfy $\abs{b_r(r,\ph)}, \abs{b_\ph(r,\ph)} \leqs M r^2$. 

Note that in first approximation, $\ph_t$ is close to $t/T_+$. 
Therefore we start by considering the linear process $r^0_t$ defined by 
\begin{equation}
 \label{lu02}
 \6r^0_t = \lambda r^0_t \6t + \sigma g_0(t) \6W_t\;,
\end{equation} 
where $g_0(t) = g_r(0, t/T_+)$, and $\lambda$ will be chosen close to
$\lambda_+$. 

\begin{prop}[Linear system]
\label{prop_lu_linear}
Choose a $T>0$ and fix a small constant $\delta>0$. Given $r_0\in(0,\delta)$ and
an interval $A\subset(0,\delta)$, define 
\begin{equation}
\label{lu03} 
 P(r_0,A) = \bigprobin{r_0}{0 < r^0_t < \delta \; \forall
t\in[0,T], r_T\in A}\;,
\end{equation} 
and let 
\begin{equation}
 \label{lu04}
 v_t = \int_0^t \e^{-2\lambda s} g_0(s)\transpose{g_0(s)} \6s
 \qquad \text{for } t\in[0,T]
 \;.
\end{equation} 
\begin{enum}
 \item 	\emph{Upper bound:} For any $T>0$, 
 \begin{equation}
  \label{lu05} 
  P(r_0,(0,\delta)) \leqs \frac{1}{\sqrt{2\pi}}
\frac{\delta^2r_0}{\sigma^3v_T^{3/2}}
  \e^{-r_0^2/2\sigma^2v_T} \e^{-2\lambda T}
  \biggbrak{1 + \biggOrder{\frac{r_0^2\e^{-2\lambda T}}{\sigma^4 v_T^2}}}\;.
 \end{equation} 
 \item 	\emph{Lower bound:} Assume $A=[\sigma a, \sigma b]$ for two constants
$0<a<b$. Then there exist constants $C_0, C_1, c>0$, depending only on $a, b,
\lambda$, such that for any $r_0\in A$ and $T\geqs 1$, 
\begin{equation}
 \label{lu06}
  P(r_0,A) \geqs \biggpar{C_0 -C_1 T \frac{\e^{-c\delta^2/\sigma^2}}{\delta^2}}
\e^{-2\lambda
T}\;.
\end{equation} 
\end{enum}
\end{prop}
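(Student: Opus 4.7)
The plan is to exploit the linear structure of~\eqref{lu02}: solve it explicitly, apply a Dambis--Dubins--Schwarz time change to reduce the problem to a question about Brownian motion killed at a barrier, and then exploit the reflection-principle density together with a careful expansion near the origin. The upper bound comes almost for free from this reduction, while the lower bound requires additionally showing that the upper barrier at $\delta$ contributes negligibly.

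\textbf{Step 1 (Reduction).} The explicit solution is
\begin{equation*}
r^0_t = \e^{\lambda t}(r_0 + \sigma M_t), \qquad
M_t = \int_0^t \e^{-\lambda s} g_0(s)\,\6W_s,
\end{equation*}
where $M$ is a continuous scalar local martingale with quadratic variation $\langle M\rangle_t=v_t$. By Dambis--Dubins--Schwarz there is a standard 1D Brownian motion $B$ with $M_t=B_{v_t}$. Setting $u=v_t$ and $X_u = r_0+\sigma B_u$, the event $\set{0<r^0_s<\delta\ \forall s\in[0,T],\,r^0_T\in A}$ becomes $\set{0<X_u<\delta\e^{-\lambda t(u)}\ \forall u\in[0,v_T],\ X_{v_T}\in\e^{-\lambda T}A}$. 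Note that $v_T$ increases monotonically to the finite limit $v_\infty$ as $T\to\infty$.

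\textbf{Step 2 (Upper bound).} Drop the constraint $X_u<\delta\e^{-\lambda t(u)}$: then
\begin{equation*}
P(r_0,(0,\delta))\leqs \int_0^{\delta\e^{-\lambda T}} p^+_{v_T}(r_0,y)\,\6y, \qquad
p^+_{v_T}(r_0,y)=\frac{2\e^{-(y^2+r_0^2)/(2\sigma^2 v_T)}}{\sigma\sqrt{2\pi v_T}}\sinh\biggpar{\frac{yr_0}{\sigma^2 v_T}}.
\end{equation*}
After the rescaling $u=y/(\sigma\sqrt{v_T})$ the upper limit $\eta=\delta\e^{-\lambda T}/(\sigma\sqrt{v_T})$ is small, so $\sinh$ and the quadratic Gaussian factor can be Taylor expanded, with relative errors $\Order{\eta^2\beta^2}$ and $\Order{\eta^2}$ where $\beta=r_0/(\sigma\sqrt{v_T})$. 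The leading integral $\int_0^\eta u\,\6u=\eta^2/2$ reproduces the stated prefactor $\delta^2 r_0\e^{-2\lambda T}/(\sqrt{2\pi}\sigma^3 v_T^{3/2})\e^{-r_0^2/2\sigma^2 v_T}$, and since $\delta$ is a fixed constant, the dominant relative error absorbs into the stated $\Order{r_0^2\e^{-2\lambda T}/(\sigma^4 v_T^2)}$.

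\textbf{Step 3 (Lower bound).} Write $P(r_0,A)\geqs P_0-P_1$ with
\begin{equation*}
P_0=\bigprob{X_u>0\ \forall u\in[0,v_T],\;X_{v_T}\in\e^{-\lambda T}A}, \qquad
P_1=\bigprob{\sup_{s\in[0,T]}r^0_s\geqs\delta,\;r^0_T\in A}.
\end{equation*}
For $P_0$ use the same density as above but with $\sinh(x)\geqs x$; since $r_0\in[\sigma a,\sigma b]$ and $v_T$ stays bounded above by $v_\infty$, this gives $P_0\geqs C_0\e^{-2\lambda T}$ with $C_0=C_0(a,b,\lambda)>0$. For $P_1$ apply the strong Markov property at the first hitting time $\tau_\delta$ of $\delta$: starting from $\delta$ with outward drift $\lambda\delta$, returning to $A\subset[\sigma a,\sigma b]$ by time $T-\tau_\delta$ requires the martingale part $\sigma M$ to overcome the exponential expansion, which by a Bernstein-type bound for the Gaussian martingale occurs with probability at most $\e^{-c\delta^2/\sigma^2}$. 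Integrating over the density of $\tau_\delta$ over $[0,T]$ and bounding that density by $\Order{1/\delta^2}$ per unit time (a standard BM hitting-time estimate, the scale $\delta^2$ being the natural diffusive time to cover distance $\delta$) yields the claimed factor $T\e^{-c\delta^2/\sigma^2}/\delta^2$.

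\textbf{Main obstacle.} The hard part is the lower bound: one must show that the upper barrier at $\delta$ genuinely contributes only a small correction, and that this correction is dominated by $C_0\e^{-2\lambda T}$ for the stated range of parameters. The delicate balance between the Gaussian concentration for $M$, the strong Markov property, and the integration over $\tau_\delta$ requires sharp constants, whereas the upper bound is essentially a Taylor expansion of an explicit density once the Dambis--Dubins--Schwarz reduction has been performed.
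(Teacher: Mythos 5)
Your upper bound and your lower bound on $P_0$ are essentially the paper's own argument: the rescaling $z_t=\e^{-\lambda t}r^0_t$ used in the paper is exactly your Dambis--Dubins--Schwarz reduction, and the reflection-principle density plus the expansion of $\cosh-1$ gives \eqref{lu05}. (Two small points there: for $P_0$ you also need $v_T$ bounded \emph{below} for $T\geqs1$, which follows from uniform ellipticity but is not implied by the upper bound $v_\infty$ you invoke; and the relative error $\Order{\eta^2}$ coming from the Gaussian factor does not sit inside the stated error term when $r_0\ll\sigma$ -- it is harmless only because $\e^{-u^2/2}\leqs 1$ makes it one-sided, which is how the paper handles it.)

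The genuine gap is in the correction term of the lower bound. You subtract $P_1=\prob{\sup_{s\leqs T}r^0_s\geqs\delta,\,r^0_T\in A}$, i.e.\ you drop the requirement that the path stays positive up to time $T$, and your estimate of $P_1$ (strong Markov at $\tau_\delta$ plus a Gaussian/Bernstein tail) produces a bound of the form $C_1T\e^{-c\delta^2/\sigma^2}/\delta^2$ \emph{without} the factor $\e^{-2\lambda T}$ that multiplies the whole parenthesis in \eqref{lu06}. This is not a bookkeeping issue: $P_1$ as you define it really is of order $(\delta/\sigma)\e^{-\lambda T}\e^{-c\delta^2/\sigma^2}$, because with probability of order one the unstable drift carries the path up to $\delta$ around time $t_1\approx\lambda^{-1}\log(\delta/\sigma)$, and landing back in $A$ at time $T$ then costs only a single factor $\e^{-\lambda(T-t_1)}$ from the endpoint density; the second factor of $\e^{-\lambda T}$ only appears when one insists that the path does not hit $0$ on the way down. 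Hence your decomposition can only yield $P\geqs C_0\e^{-2\lambda T}-C_1T\e^{-c\delta^2/\sigma^2}/\delta^2$, which is strictly weaker than \eqref{lu06}: it is vacuous already for $T$ of order $\delta^2/\sigma^2$, whereas \eqref{lu06} remains informative for $T$ up to order $\delta^2\e^{c\delta^2/\sigma^2}$. The missing idea is to keep the killing at $0$ in the subtracted event, as the paper does: subtract $\probin{r_0}{\tau_\delta<T<\tau_0,\;r_T\in A}$, bound the post-$\tau_\delta$ piece by the already-proved upper bound \eqref{lu05} started at $r_0=\delta$ (this is what supplies $\e^{-2\lambda(T-\tau_\delta)}\e^{-c\delta^2/\sigma^2}/\delta^2$), and then control the weight $\bigexpecin{r_0}{\indexfct{\tau_\delta<T\wedge\tau_0}\e^{2\lambda\tau_\delta}}\leqs \const T$ by integration by parts and a second application of \eqref{lu05}. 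Incidentally, your bound \lq\lq density of $\tau_\delta$ is $\Order{1/\delta^2}$ per unit time\rq\rq\ is both unjustified and unnecessary (the density integrates to at most $1$), but fixing it does not repair the missing $\e^{-2\lambda T}$.
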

\begin{proof}
We shall work with the rescaled process $z_t=\e^{-\lambda t}r^0_t$, which
satisfies 
\begin{equation}
 \label{lu07:01}
 \6z_t = \e^{-\lambda t}\sigma g_0(t) \6W_t\;.
\end{equation} 
Note that $z_t$ is Gaussian with variance $\sigma^2 v_t$. Using Andr\'e's
reflection principle, we get 
\begin{align}
\nonumber
P(r_0,(0,\delta)) 
&\leqs \bigprobin{r_0}{z_t > 0 \; \forall t\in[0,T], 
0 < z_T < \delta \e^{-\lambda T}} \\
\nonumber
&= \bigprobin{r_0}{0 < z_T < \delta \e^{-\lambda T}}
- \bigprobin{-r_0}{0 < z_T < \delta \e^{-\lambda T}} \\
\nonumber
&= \frac{1}{\sqrt{2\pi\sigma^2v_T}}
\int_0^{\delta \e^{-\lambda T}} 
\Bigbrak{\e^{-(r_0-z)^2/2\sigma^2v_T} 
- \e^{-(r_0+z)^2/2\sigma^2v_T}} \6z \\
&\leqs \frac{2}{\sqrt{2\pi\sigma^2v_T}}
\e^{-r_0^2/2\sigma^2v_T} \int_0^{\delta \e^{-\lambda T}}  
\sinh\biggpar{\frac{r_0z}{\sigma^2v_T}} \6z\;, 
\label{lu07:02} 
\end{align}
and the upper bound~\eqref{lu05} follows by using
$\cosh(u)-1=\frac12u^2+\Order{u^4}$. 

To prove the lower bound, we introduce the notations $\tau_0$ and $\tau_\delta$
for the first-hitting times of $r_t$ of $0$ and $\delta$. Then
we can write 
\begin{equation}
 \label{lu07:03}
 P(r_0,A) = 
 \bigprobin{r_0}{\tau_0>T, r_T\in A} 
 - \bigprobin{r_0}{\tau_\delta < T < \tau_0\;,\;r_T\in A}\;.
\end{equation} 
The first term on the right-hand side can be bounded below by a similar
computation 
as for the upper bound. Using that $r_0$ is of order $\sigma$, that $v_T$ has
order $1$ for $T\geqs1$, and taking into account the different domain of
integration, one obtains a lower bound $C_0\e^{-2\lambda T}$. As for the second
term on the right-hand side, it can be rewritten as 
\begin{equation}
 \label{lu07:04}
 \Bigexpecin{r_0}{\indexfct{\tau_\delta < T \wedge \tau_0}
 \bigprobin{\delta}{\tau_0 > T-\tau_\delta\;,\; 
 r_{T-\tau_\delta}\in A}}\;.
\end{equation}
By the upper bound~\eqref{lu05}, the probability inside the expectation is
bounded by a constant times $\e^{-2\lambda(T-\tau_\delta)} \e^{-c
\delta^2/\sigma^2}/\delta^2$. 
It remains to estimate $\expecin{r_0}{\indexfct{\tau_\delta < T \wedge \tau_0}
\e^{2\lambda\tau_\delta}}$. Integration by parts and another
application of~\eqref{lu05} show that this term is
bounded by a constant times $T$, and the lower bound is proved. 
\end{proof}

\goodbreak 

\begin{remark}
\label{rem_lu_linear}
\hfill
\begin{enum}
\item 	The upper bound~\eqref{lu05} is maximal for $r_0=\sigma\sqrt{v_T}$,
with a value of order $(\delta^2/\sigma^2v_T)\e^{-2\lambda T}$. 
\item 	Applying the reflection principle at a level $-a$ instead of $0$, one
obtains 
\begin{equation}
 \label{lu08}
 \bigprobin{r_0}{-a\e^{\lambda t} < r^0_t < \delta \; \forall t\in[0,T]}
\leqs C_0 \frac{(\delta\e^{-\lambda T}+a)^2}{\sigma^2 v_T}
\end{equation} 
for some constant $C_0$ (provided the higher-order error terms are small). 
\end{enum}
\end{remark}

We will now extend these estimates to the general nonlinear
system~\eqref{lu01}. We first show that $\ph_t$ does not differ much from
$t/T_+$ on rather long timescales. To ease the notation, given $h,h_1>0$ ,we
introduce two stopping times
\begin{align}
\nonumber
\tau_h &= \inf\setsuch{t>0}{r_t\geqs h}\;, \\
\tau_\ph &= \inf\biggsetsuch{t>0}{\Bigabs{\ph_t - \frac{t}{T_+}} \geqs M
\bigpar{h^2 t + h_1\,}}\;.
\label{lu08B}
\end{align}

\begin{prop}[Control of the diffusion along $\ph$]
\label{prop_lu_phi}
There is a constant $C_1$, depending only on the ellipticity constants
of the diffusion terms, such that 
\begin{equation}
 \label{lu09}
 \bigprobin{(r_0,0)}{\tau_\ph < \tau_h\wedge T}
 \leqs \e^{-h_1^2/(C_1 h^2\sigma^2T)}
\end{equation} 
holds for all $T,\sigma>0$ and all $h, h_1>0$. 
\end{prop}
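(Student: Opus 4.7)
The plan is to decompose the angular deviation $\ph_t-t/T_+$ into a drift and a martingale part, control the drift deterministically using the stopping at $\tau_h$, and apply a concentration inequality to the martingale part. Integrating the $\ph$-equation of~\eqref{lu01} from $\ph_0=0$, I would write
\begin{equation*}
\ph_t - \frac{t}{T_+} = D_t + M_t\;, \qquad D_t = \int_0^t b_\ph(r_s,\ph_s)\,\6s\;, \qquad M_t = \sigma\int_0^t g_\ph(r_s,\ph_s)\,\6W_s\;.
\end{equation*}
For $t<\tau_h$, the hypothesis $\abs{b_\ph(r,\ph)}\leqs Mr^2$ together with $\abs{r_s}<h$ yields $\abs{D_t}\leqs Mh^2 t$. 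On the event $\set{\tau_\ph<\tau_h\wedge T}$, evaluating at $t=\tau_\ph$ gives $\abs{\ph_t-t/T_+}=M(h^2 t+h_1)$, so the reverse triangle inequality forces $\abs{M_{\tau_\ph}}\geqs Mh_1$. Note that the constants $M$ in the definition of $\tau_\ph$ and in the drift bound are chosen to match precisely, so that the $Mh^2 t$ part of the threshold absorbs the drift exactly. Hence
\begin{equation*}
\bigprobin{(r_0,0)}{\tau_\ph<\tau_h\wedge T}\leqs \bigprob{\sup_{0\leqs t\leqs T\wedge\tau_h}\bigabs{M_t}\geqs Mh_1}\;.
\end{equation*}

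The remaining task is a concentration bound for the stopped continuous martingale $\set{M_{t\wedge\tau_h}}_t$, whose quadratic variation is
\begin{equation*}
\average{M}_{t\wedge\tau_h}=\sigma^2\int_0^{t\wedge\tau_h} g_\ph(r_s,\ph_s)\transpose{g_\ph(r_s,\ph_s)}\,\6s\;.
\end{equation*}
Controlling $g_\ph\transpose{g_\ph}$ on $\set{\abs{r}\leqs h}$ in terms of $h$ and the ellipticity constants of the joint diffusion matrix, I would aim for $\average{M}_{T\wedge\tau_h}\leqs Ch^2\sigma^2 T$. Applying the standard exponential supermartingale inequality
\begin{equation*}
\bigprob{\sup_{0\leqs t\leqs T}\bigabs{M_t}\geqs a\;,\;\average{M}_T\leqs v}\leqs 2\e^{-a^2/(2v)}
\end{equation*}
with $a=Mh_1$ and $v=Ch^2\sigma^2 T$ then yields the claimed bound after relabelling constants.

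The main obstacle is establishing the $h^2$-scaling of $\average{M}_{T\wedge\tau_h}$, which does not follow from the global ellipticity hypothesis in an obvious way; it must be read off from the specific form of $g_\ph$ near the unstable orbit in the polar-type coordinates constructed in Section~\ref{sec_cs}, together with the stopping at level $h$. Without exploiting that structure, the same martingale argument would give only the weaker estimate $\exp(-h_1^2/(C_1\sigma^2 T))$, which is comparable to the stated bound for $h_1$ of order $h$ but strictly weaker when $h_1\ll h$.
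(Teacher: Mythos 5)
Your reduction is exactly the paper's: write $\ph_t-t/T_+$ as drift plus the martingale $M_t=\sigma\int_0^t g_\ph(r_s,\ph_s)\,\6W_s$, bound the drift by $Mh^2t$ for $t<\tau_h$ using $\abs{b_\ph}\leqs Mr^2$, conclude that on $\set{\tau_\ph<\tau_h\wedge T}$ the martingale must reach $Mh_1$, and apply an exponential martingale bound (the paper's Lemma~\ref{lem_Bernstein} is precisely the inequality you quote). Up to that point there is nothing to object to.

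The gap is the step you flag yourself, and it cannot be closed along the route you propose. You need $\langle M\rangle_{T\wedge\tau_h}\leqs Ch^2\sigma^2T$, i.e.\ $g_\ph\transpose{g_\ph}=\Order{h^2}$ on the strip $\set{r<h}$, and you hope to read this off from \lq\lq the specific form of $g_\ph$ near the unstable orbit\rq\rq. But the standing hypothesis of Section~\ref{ssec_lu} (and of the polar-coordinate construction, cf.\ the analogue of~\eqref{ellipticity}) is that $(g_r,g_\ph)$ is \emph{uniformly elliptic}, so $D_{\ph\ph}=g_\ph\transpose{g_\ph}\geqs c_1>0$ on the orbit itself; the coordinates of Section~\ref{sec_cs} do not make the angular noise degenerate there (indeed $D_{\ph\ph}$ enters the Hamiltonian~\eqref{ld9} as a nondegenerate quantity). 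So the $h^2$-scaling of the quadratic variation is not merely unproven in your write-up, it is unavailable, and stopping at $\tau_h$ does not help, since the quadratic variation up to $T\wedge\tau_h$ is still bounded below by $c_1\sigma^2(T\wedge\tau_h)$. What your argument actually delivers is the exponent $h_1^2/(C\sigma^2T)$ (with $C$ involving $c_2$ and $M$), which is weaker than~\eqref{lu09} by the factor $h^{-2}$ \emph{for every} $h_1$ once $h<1$ — your closing remark that the two are comparable when $h_1\asymp h$ is not correct, since the ratio of exponents is $h^{-2}$ independently of $h_1$. Be aware that the paper's own two-line proof does the same thing you do, inserting only the global bound on $D_{\ph\ph}$ into Lemma~\ref{lem_Bernstein}; it does not invoke any smallness of $g_\ph$, so the factor $h^{-2}$ in~\eqref{lu09} is not produced by the Bernstein step either. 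Your instinct that ellipticity alone cannot give it is sound, but the fix is not a hidden degeneracy of $g_\ph$; if the stronger form is needed (as it is in the later applications with $h=\sigma^{1-\eps}$), it has to come from additional input, for instance from the fact that remaining below the level $h$ for a long time is itself unlikely, which is not captured by the quadratic-variation bound you sketch.
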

\begin{proof}
Just note that $\eta_t = \ph_t - t/T_+$ is given by 
\begin{equation}
\label{lu10} 
 \eta_t = \int_0^t b_\ph(r_s,\ph_s)\6s + \sigma \int_0^t g_\ph(r_s,\ph_s)\6Ws\;.
\end{equation} 
For $t<\tau_h$, the first term is bounded by $Mh^2t$, while the probability that
the second one becomes large can be bounded by the Bernstein-type estimate
Lemma~\ref{lem_Bernstein}. 
\end{proof}

In the following, we will set $h_1=\sqrt{h^3T}$. In that case, 
$h^2 t + h_1 \leqs h(1+2hT)$, and the right-hand side of~\eqref{lu09}
is bounded by $\e^{-h/(C_1\sigma^2)}$. All results below hold for all $\sigma$ sufficiently small, as indicated by the $\sigma$-dependent error terms. 

\begin{prop}[Upper bound on the probability to stay near the unstable orbit]
\label{prop_lu_upperbound} 
Let $h=\sigma^\gamma$ for some $\gamma\in(1/2,1)$, and let $\mu>0$ satisfy
$(1+2\mu)/(2+2\mu) > \gamma$. 
Then for any $0<r_0<h$ and all $0<T\leqs 1/h$,
\begin{equation}
 \label{lu11} 
 \bigprobin{r_0}{0 < r_t < h \; \forall t\in[0,T\wedge\tau_\ph]} 
 \leqs \frac{1}{\sigma^{2\mu(1-\gamma)}}
 \exp\biggset{-\lambda_{+} T \biggbrak{\frac{2\mu}{1+\mu} -
\biggOrder{\frac{1}{\logs}}}}\;. 
\end{equation} 
\end{prop}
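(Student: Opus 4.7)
The strategy is to combine the linear estimate of Proposition~\ref{prop_lu_linear} with a Markov iteration over $N\simeq\mu$ subintervals of $[0,T]$, the parameter $\mu$ controlling the trade-off between the polynomial prefactor $\sigma^{-2\mu(1-\gamma)}$ and the exponential rate $2\mu\lambda_+/(1+\mu)$.

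First I would reduce the nonlinear estimate to one about the linearized process $r^0_t$ of~\eqref{lu02}. On the event $\set{r_t\in(0,h)\;\forall t\leqs T\wedge\tau_\ph}$, Proposition~\ref{prop_lu_phi} guarantees $\ph_t = t/T_+ + \Order{h}$, so that $g_r(r_t,\ph_t) = g_0(t)+\Order{h}$, while the nonlinear drift is controlled by $\abs{b_r(r_t,\ph_t)}\leqs Mh^2$. The reduction must proceed piecewise on short subintervals through a localized comparison: a global Girsanov change of measure is too lossy, since its Radon--Nikodym exponent $\Order{h^3T/\sigma^2}=\Order{\sigma^{3\gamma-2}}$ would not be negligible for $\gamma<2/3$. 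The cumulative effect of the nonlinear drift and of the angular deviation is absorbed into an $\Order{1/\logs}$ shift of the effective exponential rate.

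Next, I would partition $[0,T]$ into $N=\lceil\mu\rceil$ equal subintervals of length $\Delta=T/N$ and use the strong Markov property to obtain
\[
 \bigprobin{r_0}{0<r_t<h\;\forall t\in[0,T\wedge\tau_\ph]}
 \leqs \prod_{k=0}^{N-1}\sup_{r'\in(0,h)}\bigprobin{r'}{0<r^0_t<h\;\forall t\in[0,\Delta]}.
\]
Each factor is estimated via the upper bound~\eqref{lu05}: using the elementary maximum inequality $r_0\,\e^{-r_0^2/(2\sigma^2v_\Delta)}\leqs C\sigma\sqrt{v_\Delta}$ valid for all $r_0\geqs 0$, one gets a uniform bound $Ch^2/(\sigma^2v_\Delta)\,\e^{-2\lambda_+\Delta}=C\sigma^{-2(1-\gamma)}\,\e^{-2\lambda_+\Delta}$. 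Taking the product yields $C^N\sigma^{-2N(1-\gamma)}\e^{-2\lambda_+T}$; using $N\leqs\mu+1$, relaxing $\e^{-2\lambda_+T}\leqs\e^{-2\mu\lambda_+T/(1+\mu)}$, and absorbing both $C^N$ and the excess factor $\sigma^{-2(N-\mu)(1-\gamma)}$ into the $\Order{1/\logs}$ correction in the exponent produces the claimed estimate. The range $T<(1+\mu)(1-\gamma)\logs/\lambda_+$ is trivial, since the proposition's right-hand side then exceeds $1$.

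The main obstacle is the reduction step in the first paragraph: the sharp per-subinterval factor $\e^{-2\lambda_+\Delta}$ must survive intact after accounting for the nonlinearity and the $\ph$-dependence of the coefficients, which is delicate because the natural Girsanov exponent is not small under the hypotheses. The condition $(1+2\mu)/(2+2\mu)>\gamma$, equivalent to $2\mu(1-\gamma)>2\gamma-1$, is precisely what guarantees the compatibility between the required number of subintervals $N\simeq\mu$, the size of the nonlinear correction of order $h^2$, and the upper bound $T\leqs 1/h=\sigma^{-\gamma}$ on the total time horizon.
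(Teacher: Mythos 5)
There is a genuine gap, and it sits exactly where you placed your main hand-wave. Your plan cuts $[0,T]$ into a \emph{bounded} number $N=\lceil\mu\rceil$ of subintervals of length $\Delta=T/N$, and then requires the sharp factor $\e^{-2\lambda_+\Delta}$ for the \emph{nonlinear} process, uniformly over starting points in $(0,h)$, on each such subinterval. Since $T$ may be as large as $1/h=\sigma^{-\gamma}$, your subintervals can be polynomially long in $1/\sigma$, and on such time spans no comparison with the linearised process $r^0_t$ can retain the constant $2\lambda_+$ in the exponent: the linearisation error (the analogue of the paper's martingale remainders $\sigma\e^{\lambda^+t}\cM^{0,\pm}_t$, of typical size $\sigma h\,\e^{\lambda_+ t}$) explodes once $t\gg\abs{\log\sigma}/\lambda_+$, so the "localized comparison" you invoke cannot be carried out at the scale your decomposition imposes; declaring the cumulative effect to be an $\Order{1/\logs}$ shift of the rate is an assertion, not an argument. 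This is also why your proof never genuinely uses the hypothesis relating $\mu$ and $\gamma$: in the actual proof that hypothesis is precisely what keeps the linearisation/martingale error subordinate to the Gaussian term, and it cannot be recovered from bookkeeping of prefactors alone.

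For contrast, the paper's proof inverts your roles of $\mu$ and the partition: the subintervals have length $\Delta/\lambda_+$ with $\Delta=\frac{1+\mu}{2}\log(1+\mu+h^2/\sigma^2)\approx(1+\mu)(1-\gamma)\abs{\log\sigma}$ (so their \emph{number} grows with $T$, while $\mu$ tunes their \emph{length}); on each such short interval the process is sandwiched between explicit Gaussian processes $r^\pm_t$ with rates $\lambda_+\pm Mh$, the remainders are controlled by the Bernstein estimate at the threshold $H=\e^{-\Delta}h$, and a single-interval bound $q(\Delta)\approx\sigma^{2\mu(1-\gamma)}$ results, whence the prefactor $\sigma^{-2\mu(1-\gamma)}=q(\Delta)^{-1}$ appears once (not as a product of $N$ per-interval prefactors) and the rate $\log(q(\Delta)^{-1})/\Delta=2\mu/(1+\mu)-\Order{1/\logs}$ follows. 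Two further points in your write-up would need repair even locally: quoting~\eqref{lu05} together with the maximisation $r_0\e^{-r_0^2/2\sigma^2v_\Delta}\leqs C\sigma\sqrt{v_\Delta}$ is not legitimate as stated, because the multiplicative error term in~\eqref{lu05} is not uniformly small for $r_0$ up to $h$ (one must return to the exact reflection/$\sinh$ computation); and the confinement event involves $T\wedge\tau_\ph$, so the per-interval conditioning requires a stopping-time argument rather than a bare product of suprema. As structured, your argument would only become correct if each of your $N$ blocks were itself subdivided into pieces of length $\Theta(\abs{\log\sigma})$ and treated by the comparison-plus-Bernstein scheme --- at which point the fixed-$N$ decomposition is superfluous and you have reproduced the paper's proof.
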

\begin{proof}
The proof is very close in spirit to the proof of~\cite[Theorem~3.2.2]{BGbook}, 
so that we will only give the main ideas. The principal difference is that we
are interested in the exit from an asymmetric interval $(0,h)$, which yields an
exponent close to $2\lambda_+$ instead of $\lambda_+$ for a symmetric interval
$(-h,h)$. To ease the notation, we will write $\lambda$ instead of $\lambda_{+}$ throughout the proof.

We introduce a partition of $[0,T]$ into intervals of equal length
$\Delta/\lambda$, for a $\Delta$ to be chosen below. Then the Markov
property implies that the probability~\eqref{lu11} is bounded by 
\begin{equation}
 \label{lu12:1}
 q(\Delta)^{-1} \exp\biggset{-\lambda T \frac{\log(q(\Delta)^{-1})}{\Delta}}\;,
\end{equation} 
where $q(\Delta)$ is an upper bound on the probability to leave $(0,h)$ on a
time interval of length $\Delta/\lambda$. We thus want to show that 
$\log(q(\Delta)^{-1})/\Delta$ is close to $2$ for a suitable choice of
$\Delta$. 

We write the equation for $r_t$ in the form 
\begin{equation}
 \label{lu12:2}
 \6r_t = \bigbrak{\lambda r_t + b(r_t,\ph_t)}\6t + \sigma g_0(t)\6W_t +
 \sigma g_1(r_t,\ph_t,t)\6W_t\;.
\end{equation} 
Note that for $\abs{r_t}<h$ and $t<\tau_\ph\wedge T$, we may assume that 
$g_1(r_t,\ph_t,t)$ has order $h+h^2T$, which has in fact order $h$ since we
assume $T\leqs 1/h$. 
Introduce the Gaussian processes 
\begin{equation}
 \label{lu12:3}
 r^\pm_t = r_0 \e^{\lambda^\pm t} + \sigma \e^{\lambda^\pm t}
 \int_0^t \e^{-\lambda^\pm s}g_0(s)\6W_s\;,
\end{equation} 
where $\lambda^\pm = \lambda\pm Mh$. 
Applying the comparison principle to $r_t-r^+_t$, we have 
\begin{equation}
 \label{lu12:4}
 r^-_t + \sigma \e^{\lambda^- t}\cM^-_t \leqs r_t \leqs r^+_t + \sigma
\e^{\lambda^+ t}\cM^+_t
\end{equation} 
as long as $0<r_t\leqs h$, 
where $\cM^\pm_t$ are the martingales 
\begin{equation}
 \label{lu12:5}
 \cM^\pm_t = \int_0^t \e^{-\lambda^\pm s}
g_1(r_s,\ph_s,s)\6W_s\;.
\end{equation} 
We also have the relation 
\begin{equation}
 \label{lu12:6}
 r^+_t = \e^{2Mht}r^-_t + \sigma \e^{\lambda^+ t} \cM^0_t
 \qquad
 \text{where}
 \quad
 \cM^0_t = \int_0^t \bigbrak{\e^{\lambda^+ s} - \e^{\lambda^- s}}g_0(s)\6W_s\;.
\end{equation} 
Using It\^o's isometry, one obtains that $\cM^0_t$ has a variance of order
$h^2$. 
This, as well as Lemma~\ref{lem_Bernstein} in the case of $\cM^\pm_t$, shows
that 
\begin{equation}
 \label{lu12:8}
 \Bigprob{\sup_{0<s<t}\abs{\sigma\e^{\lambda^+s}\cM^{0,\pm}_s} > H} 
 \leqs 
 \exp\biggset{-\frac{H^2}{2C_1h^2\sigma^2\e^{2\lambda^+t}}}
\end{equation} 
for some constant $C_1$. 
Combining~\eqref{lu12:4} and~\eqref{lu12:6}, we obtain that $0<r_t<h$ implies 
\begin{equation}
 \label{lu12:7}
-\sigma  \e^{\lambda^+ t}\cM^+_t < r^+_t < \e^{2Mht}
\bigbrak{h+\sigma\e^{\lambda^-
t}\cM^-_t} - \sigma \e^{\lambda^+ t} \cM^0_t\;.
\end{equation} 
The probability we are looking for is thus bounded by 
\begin{equation}
 \label{lu12:9} 
 q(\Delta) = \Bigprob{-H < r^+_t <\e^{2Mht}\bigbrak{h+ H} + H \;\forall
t\in[0,\Delta/\lambda]} + 3
\exp\biggset{-\frac{H^2}{2C_1h^2\sigma^2\e^{2\lambda^+\Delta/\lambda}}}\;.
\end{equation} 
The first term on the right-hand side can be bounded using~\eqref{lu08} with
$a=H$, yielding 
\begin{equation}
 \label{lu12:10}
 q(\Delta) \leqs \frac{C_0}{\sigma^2}
 \biggbrak{(\e^{2Mh\Delta/\lambda}\bigbrak{h+ H} + H)\e^{-\Delta}+H}^2
 + 3 \exp\biggset{-\frac{H^2}{2C_1h^2\sigma^2\e^{2\lambda^+\Delta/\lambda}}}\;.
\end{equation} 
We now make the choices 
\begin{equation}
 \label{lu12:11}
 H = \e^{-\Delta}h
 \qquad
 \text{and}
 \qquad
 \Delta = \frac{1+\mu}{2} \log \biggpar{1+\mu+\frac{h^2}{\sigma^2}}\;.
\end{equation} 
Substituting in~\eqref{lu12:10} and carrying out computations similar to those
in~\cite[Theorem~3.2.2]{BGbook} yields $\log(q(\Delta)^{-1})/\Delta \geqs
2\mu/(1+\mu) - \Order{1/\logs}$, and hence the result. 
\end{proof}

The estimate~\eqref{lu11} can be extended to the exit from a neighbourhood of
order $1$ of the unstable orbit, using exactly the same method as
in~\cite[Section~D]{BGK12}: 

\begin{prop}
\label{prop_lu_upperbound2} 
Fix a small constant $\delta>0$. Then for any 
$\kappa < 2$, there exist constants $\sigma_0,\alpha,C>0$ and $0<\nu<2$ such that  
\begin{equation}
 \label{lu14} 
 \bigprobin{r_0}{0 < r_t < \delta \; \forall t\in[0,T]} 
 \leqs \frac{C}{\sigma^{\alpha}} \e^{-\kappa\lambda_{+} T}
\end{equation} 
holds for all $r_0\in(0,\delta)$, all $\sigma<\sigma_0$ and all
$T\leqs\sigma^{-\nu}$. 
\end{prop}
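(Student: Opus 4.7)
The plan is to combine Proposition~\ref{prop_lu_upperbound} on the thin strip $(0,h)$ with a forced-escape argument on the outer strip $(h,\delta)$, then iterate via the strong Markov property. Throughout I fix $\gamma\in(1/2,1)$ close to~$1$, set $h=\sigma^\gamma$, and pick $\mu>0$ with $2\mu/(1+\mu)>\kappa$.

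First I would handle the outer-strip escape: for a path starting at $r_0\geqs h$, the comparison argument from the proof of Proposition~\ref{prop_lu_upperbound} yields $r_t\geqs h\e^{\lambda_+t}-\sigma\e^{\lambda_+t}\abs{\cM_t}$ as long as $0<r_s<\delta$ for $s\leqs t$, where $\cM_t$ is a martingale of quadratic variation $\Order{1}$. On the deterministic timescale $T_h=\lambda_+^{-1}\log(\delta/h)\asymp(1-\gamma)\logs$, the deterministic part reaches $\delta$, while Bernstein's inequality for martingales gives
\begin{equation*}
\biggprob{\sup_{t\in[0,T_h]}\sigma\abs{\cM_t}>h/2}
\leqs \e^{-c\sigma^{-2(1-\gamma)}}\;,
\end{equation*}
which is super-polynomially small in $\sigma$. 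Hence each excursion into $(h,\delta)$ terminates at level $\delta$ within time $T_h$ except on a negligible event.

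Next I would partition $[0,T]$ into $N=\intpartplus{T\lambda_+/\Delta}$ blocks of length $\Delta/\lambda_+$, choosing $\Delta=C\logs$ with $C$ large enough that $2\mu(1-\gamma)\logs/\Delta<(\tfrac{2\mu}{1+\mu}-\kappa)\lambda_+$; this respects the single-block constraint $\Delta\leqs 1/h$ needed for Proposition~\ref{prop_lu_upperbound}. By the strong Markov property at the block endpoints, combined with the outer-escape estimate applied at the first crossing of level~$h$ inside each block, surviving a single block requires either (i)~no crossing of $h$ during the block, bounded by Proposition~\ref{prop_lu_upperbound} as $\sigma^{-2\mu(1-\gamma)}\exp\set{-(\tfrac{2\mu}{1+\mu}-\Order{1/\logs})\lambda_+\Delta}$, or (ii)~a crossing that fails to escape, bounded by $\e^{-c\sigma^{-2(1-\gamma)}}$. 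Multiplying over the $N$ blocks, the accumulated polynomial prefactor $\sigma^{-2\mu(1-\gamma)N}$ is absorbed into a fixed power $\sigma^{-\alpha}$ by the excess slack in the exponent, yielding the claimed bound $C'\sigma^{-\alpha}\e^{-\kappa\lambda_+T}$. The ceiling $\nu<2$ ultimately comes from Proposition~\ref{prop_lu_phi}: propagating the coefficients correctly over $[0,T]$ requires the angular deviation to stay $\Order{1}$, which forces $h^2T\leqs\Order{1}$, i.e.\ $T\leqs\sigma^{-2\gamma}<\sigma^{-2}$.

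The main obstacle I expect is the parameter balancing: as $\kappa\to 2^-$ the slack $\tfrac{2\mu}{1+\mu}-\kappa$ shrinks, forcing $\mu$ large; simultaneously $1-\gamma$ must shrink to keep the per-block prefactor manageable, but not so fast that the angular-deviation control fails, and $\Delta$ must be tuned to absorb the accumulated prefactor at each of the $N\sim T\lambda_+/\logs$ iterations. Propagating the angular-deviation estimate of Proposition~\ref{prop_lu_phi} uniformly across these $N$ blocks without picking up an extra $\e^{cT}$ factor is the technical core of the argument. This bookkeeping is exactly the bootstrap carried out in~\cite[Section~D]{BGK12} for a closely related setting, and transfers directly to our context.
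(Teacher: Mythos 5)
Your assembly is genuinely different from the paper's, although it uses the same three ingredients. The paper's proof keeps the excursion structure explicit: it bounds the tail of the exit time $\tau_1$ from $(0,h)$ (Propositions~\ref{prop_lu_phi} and~\ref{prop_lu_upperbound}), the tail of the exit time $\tau_2$ from $(h/2,\delta)$, and the probability of falling back from $h$ to $h/2$ before reaching $\delta$ (your outer-strip estimate is precisely~\eqref{lu14:3}), and then combines these via Laplace transforms of the stopping times following \cite[Lemma~D.5]{BGK12}, finishing with the exponential Markov inequality; the restriction $T\leqs\sigma^{-\nu}$ with $\nu<2-\gamma$ enters there so that the additive error $\e^{-1/(C_1\sigma^{2-\gamma})}$ in the tail of $\tau_1$ stays dominated up to time $T$. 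You instead tile $[0,T]$ with blocks of length $C\logs/\lambda_+$ and multiply worst-case one-block survival probabilities through the Markov property -- the same device used inside the proof of Proposition~\ref{prop_lu_upperbound}, pushed up one level. That is a viable and more elementary assembly, but it is not \lq\lq exactly the bootstrap of \cite{BGK12}\rq\rq, which is the Laplace-transform route the paper takes. Note also that your diagnosis of the ceiling $\nu<2$ is misplaced: in a blocked argument Proposition~\ref{prop_lu_phi} only needs to be applied once per block of length $\Order{\logs}$ (restarting the phase by periodicity), so no global condition $h^2T=\Order{1}$ arises; your route in fact tolerates much larger $T$, which is harmless since only some $\nu<2$ is claimed.

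The one genuine gap is that your one-block dichotomy is not exhaustive as stated. First, on the survival event a block may start anywhere in $(0,\delta)$; if it starts at some $r\geqs h$ and never crosses $h$, case~(i) does not apply (Proposition~\ref{prop_lu_upperbound} requires $r_0<h$ and confinement to $(0,h)$), and you must instead run the escape estimate directly from the block's initial point. Second, and more substantively, case~(ii) is false when the first crossing of $h$ occurs within $T_h\asymp\lambda_+^{-1}\log(\delta/h)$ of the block end: from level $h$ the path survives a remaining time shorter than $T_h$ with probability of order one, so such blocks are not $\e^{-c\sigma^{-2(1-\gamma)}}$-rare. The repair stays inside your scheme: bound one-block survival by \lq\lq confinement to $(0,h)$ up to time $C\logs/\lambda_+-T_h$\rq\rq\ (Proposition~\ref{prop_lu_upperbound}, adding the per-block angular-failure probability of Proposition~\ref{prop_lu_phi}, which is a negligible $\e^{-1/(C_1\sigma^{2-\gamma})}$) plus \lq\lq crossing of $h$ before that time followed by survival for at least $T_h$ more\rq\rq\ (strong Markov property plus the escape estimate); the loss is a factor $\e^{\kappa_1\lambda_+T_h}=\sigma^{-\Order{1}}$ per block, absorbed by enlarging $C$. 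Two smaller points: choosing $\gamma$ close to $1$ forces $\mu$ large enough that $\gamma<(1+2\mu)/(2+2\mu)$, the standing hypothesis of Proposition~\ref{prop_lu_upperbound}; and on $(h,\delta)$ the comparison drift rate is $\lambda_+-M\delta$ rather than $\lambda_+\pm Mh$, which only changes constants. With these corrections your argument does yield~\eqref{lu14}.
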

\begin{proof}
The proof follows along the lines of~\cite[Sections~D.2 and D.3]{BGK12}. The
idea is to show that once sample paths have reached the level $h=\sigma^\gamma$,
they are likely to reach level $\delta$ after a relatively short time, without
returning below the level $h/2$. To control the effect of paths which switch
once or several times between the levels $h$ and $h/2$ before leaving
$(0,\delta)$, one uses Laplace transforms. 

Let $\tau_1$ denote the first-exit time of $r_t$ from $(0,h)$, where we set
$\tau_1=T$ if $r_t$ remains in $(0,h)$ up to time $T$. Combining
Proposition~\ref{prop_lu_phi} with $h_1=\sqrt{h^3t}$ and
Proposition~\ref{prop_lu_upperbound}, we obtain 
\begin{equation}
 \label{lu14:1}
 \bigprobin{r_0}{\tau_1 > t}
 \leqs \frac{1}{\sigma^{2\mu(1-\gamma)}} \e^{-\kappa_1\lambda_{+} t}
 + \e^{-1/(C_1\sigma^{2-\gamma})}
 \quad\forall t\in[0,T]
 \;,
\end{equation} 
where $\kappa_1=2\mu/(1+\mu) - \Order{1/\logs}$. The first term 
dominates the second one as long as $\nu < 2-\gamma$. Thus the Laplace
transform $\expec{\e^{u(\tau_1\wedge T)}}$ exists for all
$u<1/(\kappa_1\lambda_{+})$. 

Let $\tau_2$ denote the first-exit time of $r_t$ from $(h/2,\delta)$. As
in~\cite[Proposition~D.4]{BGK12}, using the
fact that the drift term is bounded below by a constant times $r$,
that $\set{\tau_2>t}\subset\set{r_t<\delta}$, 
an endpoint estimate and the Markov property to restart the process at
times which are multiples of $\logs$, we obtain 
\begin{equation}
 \label{lu14:2}
 \bigprobin{h}{\tau_2 > t} \leqs \exp\biggset{-C_2
\frac{t}{\sigma^{2(1-\gamma)}\logs}}
 \quad\forall t\in[0,T]
 \;
\end{equation} 
for some constant $C_2$. Therefore the Laplace transform
$\expec{\e^{u(\tau_2\wedge T)}}$ 
exists for all $u$ of order $1/(\sigma^{2(1-\gamma)}\logs)$. In
addition, one can show that the probability that sample paths starting at level
$h$ reach $h/2$ before $\delta$ satisfies 
\begin{equation}
 \label{lu14:3}
 \bigprobin{h}{\tau_{h/2} < \tau_\delta} 
 \leqs 2 \exp \biggset{-C\frac{h^2}{\sigma^2}}\;,
\end{equation} 
which is exponentially small in $1/\sigma^{2(1-\gamma)}$. 

We can now use~\cite[Lemma~D.5]{BGK12} to estimate the Laplace transform of
$\tau=\tau_0\wedge\tau_\delta\wedge T$, and thus the decay of
$\prob{\tau>t}$ via the Markov
inequality. Given $\kappa=2-\epsilon$, we first choose $\mu$ and $\sigma_0$ such
that $\kappa_1\leqs2-\epsilon/2$. This allows to estimate
$\expec{\e^{u\tau}}$ for $u=\kappa_1-\epsilon/2$ to get the desired
decay, and determines $\alpha$. The choice of $\mu$ also determines $\gamma$ and
thus $\nu$. 
\end{proof}

\begin{prop}[Lower bound on the probability to stay near the unstable orbit]
\label{prop_lu_lowerbound} 
Let $h=\sigma^\gamma$ for some $\gamma\in(1/2,1)$, and let $A=[\sigma a,\sigma
b]$ for constants $0<a<b$. Then there exists a constant $C$ such that 
\begin{equation}
 \label{lu15}
 \bigprobin{r_0}{0 < r_t < h \; \forall t\in[0,T]\;,\; r_T\in A}
 \geqs C 
 \exp\biggset{-2\lambda_{+} T \biggbrak{1 +
\biggOrder{\frac{1}{\logs}}}}
\end{equation} 
holds for all $r_0\in A$ and all $T\leqs 1/h$. 
\end{prop}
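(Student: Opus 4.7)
The strategy mirrors the proof of Proposition~\ref{prop_lu_upperbound}, but replaces the upper bound of Proposition~\ref{prop_lu_linear}(i) by the lower bound of Proposition~\ref{prop_lu_linear}(ii), and iterates it via the strong Markov property on moderately long sub-intervals. Fix $\tau_0 = \alpha\logs$ with $\alpha>0$ a constant to be chosen later, and set $h_1 = \sqrt{h^3\tau_0}$. Introduce the Gaussian reference processes $r^\pm_t$ solving $\6r^\pm_t = \lambda_+^\pm r^\pm_t\,\6t + \sigma g_0(t)\6W_t$ with rates $\lambda_+^\pm = \lambda_+ \pm Mh$. On the event $\set{r_s<h\;\forall s\in[0,t]}\cap\set{\tau_\ph>t}$, the drift perturbation $b_r(r_t,\ph_t)$ is bounded by $Mh^2$ and the diffusion perturbation $g_r(r_t,\ph_t)-g_0(t)$ is $\Order{h+h_1}$, so, exactly as in the proof of Proposition~\ref{prop_lu_upperbound}, the comparison principle yields
\begin{equation*}
r^-_t - \sigma\e^{\lambda_+^- t}\abs{\cM^-_t} \leqs r_t \leqs r^+_t + \sigma\e^{\lambda_+^+ t}\abs{\cM^+_t}\;,
\end{equation*}
where $\cM^\pm_t$ are martingales of quadratic variation $\Order{h^2 t}$.

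For the one-step estimate, pick constants $0<a<a_1<b_1<b$, set $A_1 = [\sigma a_1,\sigma b_1]$, and let $H = \sigma\e^{-\lambda_+\tau_0} = o(\sigma)$. For $r_0\in A_1$, the event
\begin{equation*}
\Omega_0 = \Bigset{2H<r^-_t,\;r^+_t<h-2H\;\forall t\in[0,\tau_0];\;r^-_{\tau_0}\in A_1}
\cap \Bigset{\sup_{t\leqs\tau_0}\sigma\e^{\lambda_+^\pm t}\abs{\cM^\pm_t}\leqs H}
\cap \set{\tau_\ph>\tau_0}
\end{equation*}
is contained in $\set{0<r_t<h\;\forall t\in[0,\tau_0]\,,\,r_{\tau_0}\in A}$. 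Bernstein's inequality (Lemma~\ref{lem_Bernstein}) gives a martingale-tail contribution of order $\e^{-1/\Order{h^2}}$, and Proposition~\ref{prop_lu_phi} bounds $\fP\set{\tau_\ph\leqs\tau_0}$ by $\e^{-h/(C_1\sigma^2)}$; both are super-exponentially small in $\sigma^{-2(1-\gamma)}$. Applying Proposition~\ref{prop_lu_linear}(ii) to $r^-_t$ on the box $[\sigma a,\sigma b]$ then yields
\begin{equation*}
\inf_{r_0\in A_1}\bigprobin{r_0}{\Omega_0} \geqs C_2\,\e^{-2\lambda_+^-\tau_0}
\end{equation*}
with $C_2>0$ independent of $\sigma$, for $\sigma$ small enough.

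Set $n = \intpart{T/\tau_0}$ and iterate the one-step bound via the strong Markov property at the times $t_k = k\tau_0$ for $k=0,\dots,n-1$, appending one final step of length at most $\tau_0$ to secure $r_T\in A$. This gives
\begin{align*}
\bigprobin{r_0}{0<r_t<h\;\forall t\in[0,T]\,,\,r_T\in A}
&\geqs \bigbrak{C_2\,\e^{-2\lambda_+^-\tau_0}}^{n+1}\\
&\geqs \exp\Bigset{-2\lambda_+ T\Bigbrak{1+\Order{Mh}+\Order{1/(\alpha\logs)}}}\;.
\end{align*}
Since $Mh = \Order{\sigma^\gamma}$ is smaller than any inverse power of $\logs$, choosing $\alpha$ to be a sufficiently large fixed constant produces the claimed $\Order{1/\logs}$ relative error in the exponent.

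The principal obstacle lies in the one-step estimate: ensuring that $C_2$ is uniform both in $\sigma$ and in the starting point $r_0\in A_1$. This requires checking that, after subtracting the martingale threshold $2H=o(\sigma)$ from the box $[0,h]$ and from $A_1$, the residual region still contains a fixed sub-box of $[\sigma a,\sigma b]$, so that Proposition~\ref{prop_lu_linear}(ii) applies with a $\sigma$-independent constant $C_0$. The constant $\alpha$ then only needs to be chosen large enough that the per-step loss $\log(1/C_2)$ is absorbed into the $\Order{1/\logs}$ relative error in the exponent of the iterated bound; no upper bound on $\alpha$ is needed because $\tau_0 = \alpha\logs \ll 1/h$ holds automatically for small $\sigma$.
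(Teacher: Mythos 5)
Your overall scheme is the same as the paper's: split $[0,T]$ into blocks of length of order $\logs$, prove a one-step lower bound by sandwiching $r_t$ between the linear processes $r^\pm_t$ and invoking the lower bound of Proposition~\ref{prop_lu_linear}, control the comparison errors by Lemma~\ref{lem_Bernstein} and Proposition~\ref{prop_lu_phi}, and iterate via the Markov property. The genuine gap is your final claim that no upper bound on $\alpha$ is needed. The martingale-tail term is not $\e^{-1/\Order{h^2}}$: as in~\eqref{lu12:8}, the relevant exponent is $H^2/(2C_1h^2\sigma^2\e^{2\lambda^+\tau_0})$, i.e.\ the amplification factor $\e^{2\lambda^+\tau_0}$ cannot be dropped. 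With your choice $H=\sigma\e^{-\lambda_+\tau_0}$ and $\tau_0=\alpha\logs$ this exponent is of order $\sigma^{4\lambda_+\alpha-2\gamma}$, so the tail probability is $\exp\{-c\,\sigma^{4\lambda_+\alpha-2\gamma}\}$, which tends to $1$ as soon as $\alpha>\gamma/(2\lambda_+)$; indeed the typical size of the error $\sigma\e^{\lambda^+t}\abs{\cM^\pm_t}$, namely $\sigma h\e^{\lambda_+\tau_0}=\sigma^{1+\gamma-\lambda_+\alpha}$, then exceeds your threshold $H$ (and even the width $\sigma$ of the target window). Since the per-step main term $C_2\e^{-2\lambda_+^-\tau_0}\sim\sigma^{2\lambda_+\alpha}$ is only polynomially small, the error terms swamp it and the one-step estimate fails for large $\alpha$. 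The argument works only if $\alpha$ is chosen \emph{small} enough (e.g.\ $\alpha<\gamma/(2\lambda_+)$), which is precisely the analogue of the paper's choice $\Delta=c_0\logs$ with $c_0$ small, where one needs $\sigma^2\e^{4\Delta}\to0$ so that $C_2\e^{-c/(\sigma^2\e^{4\Delta})}$ is negligible compared with $C_1\e^{-2\Delta}$. Note also that a large $\alpha$ buys you nothing: the per-step loss $\log(1/C_2)$ relative to the per-step exponent $2\lambda_+\tau_0$ is already $\Order{1/(\alpha\logs)}$ for every fixed $\alpha>0$.

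A second, smaller repair: as written, $\Omega_0$ does not force $r_{\tau_0}\leqs\sigma b$. You only require $r^-_{\tau_0}\in A_1$ and $r^+_t<h-2H$, and the upper comparison gives $r_{\tau_0}\leqs r^+_{\tau_0}+H$; but $r^+_{\tau_0}$ is tied to $r^-_{\tau_0}$ only through the relation $r^+_t=\e^{2Mht}r^-_t+\sigma\e^{\lambda^+t}\cM^0_t$ of~\eqref{lu12:6}, so the event must also control the martingale $\cM^0$ (with the same Bernstein estimate, hence subject to the same constraint on $\alpha$). With these two corrections your proof coincides with the paper's.
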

\begin{proof}
Consider again a partition of $[0,T]$ into intervals of length
$\Delta/\lambda_{+}$, and let $q(\Delta)$ be a lower bound on 
\begin{equation}
 \label{lu15:1}
 \bigprobin{r_0}{0 < r_t < h \; \forall t\in[0,\Delta/\lambda_{+}]\;,\; 
 r_{\Delta/\lambda_{+}}\in A}
\end{equation} 
valid for all $r_0\in A$. By comparing, as in the proof of
Proposition~\ref{prop_lu_upperbound}, $r_t$ with solutions of linear equations,
and using the lower bound of Proposition~\ref{prop_lu_linear}, we obtain 
\begin{equation}
 \label{lu15:2}
 q(\Delta) = C_1\e^{-2\Delta} - C_2 \e^{-c/(\sigma^2\e^{4\Delta})}
\end{equation} 
for constants $C_1,C_2,c>0$, 
where the second term bounds the probability that the martingales
$\sigma\cM^{0,\pm}_t$ exceed $H=\e^{-\Delta}h$ times an exponentially decreasing curve. 
By the Markov property, we can bound
the probability we are interested in below by the expression~\eqref{lu12:1}. The
result follows by choosing $\Delta=c_0\logs$ for a constant $c_0$. 
\end{proof} 

We can now use the last two bounds to estimate the principal eigenvalue of the
Markov chain on $E=[0,2\delta]$ with kernel $\Ku$, describing the process killed
upon hitting either the unstable orbit at $r=0$ or level $r=2\delta$. 

\begin{theorem}[Bounds on the principal eigenvalue $\lOu$]
\label{thm_lOu} 
For any sufficiently small $\delta>0$, there exist constants $\sigma_0, c>0$ such that 
\begin{equation}
 \label{lu16}
 (1-c\delta^{2})\e^{-2\lambda_+T_+}
 \leqs \lOu \leqs
 (1+c\delta^{2})\e^{-2\lambda_+T_+}
\end{equation} 
holds for all $\sigma < \sigma_0$. 
\end{theorem}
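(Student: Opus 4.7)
The natural route is to apply Proposition~\ref{prop_estimate_l0}, which sandwiches $\lOu$ between $[\inf_{x\in A} K^{\mathrm u}_n(x,A)]^{1/n}$ and $[\sup_{x\in E}K^{\mathrm u}_n(x,E)]^{1/n}$, and to control the $n$-step transition probabilities using the sample-path estimates of Propositions~\ref{prop_lu_phi}, \ref{prop_lu_upperbound2} and \ref{prop_lu_lowerbound}, combined with a careful linear comparison based on Proposition~\ref{prop_lu_linear}.

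\textbf{Upper bound.} By definition $K^{\mathrm u}_n(x,E)$ is the probability that the sample path starting at $(0,x)$ stays in $r\in(0,2\delta)$ until $\ph_t$ first reaches $n$. Proposition~\ref{prop_lu_phi}, applied with $h=2\delta$ and $h_1=O(\sqrt{h^3 n T_+})$, ensures that the hitting time $\tau_n$ of $\ph=n$ lies in the window $nT_+(1+\Order{\delta})$ with probability at least $1-\e^{-h/(C_1\sigma^2)}$, which is negligible. Inside this window, the nonlinear drift satisfies $|b_r(r,\ph)|\leqs M\delta\cdot r$ on $r\in(0,2\delta)$, so a standard comparison principle bounds $r_t$ above by the linear process $r^+_t$ with growth rate $\lambda_+^+=\lambda_++M\delta$ and the same diffusion $g_0$. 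Applying the upper bound of Proposition~\ref{prop_lu_linear} to $r^+_t$ on time horizon $T=nT_+$ yields
\[
 \sup_{x\in E}K^{\mathrm u}_n(x,E) \leqs C(\sigma)\,\e^{-2\lambda_+^+ nT_+}\bigbrak{1+\Order{\delta^2}}\;,
\]
after absorbing the Gaussian endpoint factor into the prefactor. Choosing $n$ growing slowly in $\sigma$ (for instance $n\asymp\logs$, within the range allowed by Proposition~\ref{prop_lu_upperbound2}) makes $C(\sigma)^{1/n}\to1$, and the bound reduces to $\lOu\leqs(1+c\delta^2)\e^{-2\lambda_+T_+}$ once one optimises the comparison rate $\lambda_+^+$ by noting that conditioning on survival forces the paths to stay at scale $r=\Order{\sigma}$ for most of the interval, so that the effective nonlinear contribution to the drift integrates only to $\Order{\delta^2}$ rather than $\Order{\delta}$ per period.

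\textbf{Lower bound.} Pick $A=[\sigma a,\sigma b]\subset E$ with $0<a<b$ fixed. For $x\in A$ the opposite comparison yields a lower bound $r_t\geqs r^-_t$, with $r^-_t$ linear of growth rate $\lambda_+^-=\lambda_+-M\delta$. Applying the lower bound of Proposition~\ref{prop_lu_lowerbound} (with $h=\sigma^\gamma$, $\gamma\in(1/2,1)$, which is well below $2\delta$ for $\sigma$ small), together with Proposition~\ref{prop_lu_phi} to align the stopping time $\tau_n$ with $nT_+$, gives
\[
 \inf_{x\in A}K^{\mathrm u}_n(x,A)\geqs c(\sigma)\,\e^{-2\lambda_+^- nT_+}\bigbrak{1-\Order{\delta^2}}\;.
\]
Extracting the $n$-th root with the same choice of $n$ as above eliminates $c(\sigma)$ and produces the matching lower bound $\lOu\geqs(1-c\delta^2)\e^{-2\lambda_+T_+}$.

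\textbf{Main obstacle.} The technical crux is upgrading the naive linear-comparison correction $(1\pm c\delta)$ to the sharp $(1\pm c\delta^2)$. A crude Gronwall bound on $|r_t-r^\pm_t|$ uses only $|b_r|\leqs M\delta\cdot r$ on the confinement region and yields a shift of the effective Lyapunov exponent by $M\delta$, hence a relative error of order $\delta T_+$. To get $\delta^2$ one must exploit that paths surviving to $\ph=n$ are, by the Gaussian profile of Proposition~\ref{prop_lu_linear}, essentially concentrated at the scale $r_t=\Order{\sigma\sqrt{v_t}}$ away from both boundaries, so the quadratic term $b_r=\Order{r^2}$ averages to $\Order{\sigma^2}$ along such paths; the $\delta$-dependence then enters only through rare boundary excursions whose contribution is suppressed by a Gaussian factor $\e^{-c\delta^2/\sigma^2}$. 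This refinement, combined with taking $n$ large enough to wash out prefactors but small enough for Proposition~\ref{prop_lu_phi} to remain sharp, is the delicate part of the argument.
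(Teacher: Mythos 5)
Your overall skeleton (sandwich $\lOu$ via Proposition~\ref{prop_estimate_l0}, control the angular variable with Proposition~\ref{prop_lu_phi}, get the radial confinement rate from the sample-path estimates) is the paper's, but the step that actually produces the sharp rate in your upper bound does not work. The confinement event $\set{0<r_t<2\delta\ \forall t\leqs T}$ is not monotone, so the one-sided comparison $r_t\leqs r^+_t$ only transfers the lower barrier: it yields $P(\text{confinement})\leqs\fP(r^+_t>0\ \forall t\leqs T)$, which is of order $r_0/\sigma$ and carries no exponential decay in $T$ at all, because the upper barrier at $2\delta$ does not pass to $r^+$. You cannot simply "apply the upper bound of Proposition~\ref{prop_lu_linear} to $r^+_t$ on the horizon $T=nT_+$". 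Moreover the inequality you display, with rate $2(\lambda_++M\delta)$, is false as stated: it would make the survival probability strictly smaller than the lower bound of Proposition~\ref{prop_lu_lowerbound} for $T$ of order $\sigma^{-\nu}$ (the sign of an $\Order{\delta}$ drift correction works against you in an upper bound, not for you). There is also the ignored mismatch of noise coefficients ($g_r(r_t,\ph_t)$ versus $g_0(t)$), whose martingale corrections grow like $\e^{\lambda^+t}$; this is precisely why the paper's Proposition~\ref{prop_lu_upperbound} works with a two-sided sandwich on windows of length $\Delta/\lambda_+$ at the scale $h=\sigma^\gamma$, obtains only the exponent $2\mu/(1+\mu)<2$ there, and then needs the two-scale excursion/Laplace-transform argument of Proposition~\ref{prop_lu_upperbound2}, applied with $\kappa=2-\delta^2$, to reach any exponent below $2$ on the $\delta$-neighbourhood. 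Your "main obstacle" paragraph names the correct heuristic (surviving paths live at scale $\sigma$, boundary excursions cost $\e^{-c\delta^2/\sigma^2}$), but the steps you actually wrote bypass the two propositions that implement it, and cite Proposition~\ref{prop_lu_upperbound2} only for its admissible time range rather than for its estimate.

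A second, independent gap is the choice $n\asymp\logs$. The prefactors you must divide out are polynomial in $1/\sigma$ (e.g.\ $C/\sigma^\alpha$ from Proposition~\ref{prop_lu_upperbound2}, or $\delta^2/\sigma^2$ from the Gaussian endpoint factor), so $(\sigma^{-\alpha})^{1/n}\to\e^{\alpha}\neq1$ when $n\asymp\logs$: you are left with a fixed multiplicative constant, not $1+\Order{\delta^2}$. The paper takes $n$ of order $\sigma^{-\nu}$ with $\nu<2$, which is compatible with the horizons $T\leqs\sigma^{-\nu}$ in Proposition~\ref{prop_lu_upperbound2} and $T\leqs1/h$ in Proposition~\ref{prop_lu_lowerbound}, so that $\frac1n\Order{\logs+\sigma^{-(2-\gamma)}}$ can be made smaller than $\delta^2$. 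Two smaller points: the angular error coming from $\abs{b_\ph}\leqs Mr^2\leqs 4M\delta^2$ is $\Order{\delta^2}$, not $\Order{\delta}$ (this is exactly the source of the paper's time change $T=nT_+/(1+M\delta^2T_+)$); and in your lower bound the extra comparison with rate $\lambda_+-M\delta$ is both unnecessary (Proposition~\ref{prop_lu_lowerbound} already applies to the nonlinear system with only a $1/\logs$ correction) and, as written, asserts a lower bound exceeding the theorem's upper bound. What remains for the lower bound, and what the paper does, is to relate $r$ at the deterministic time $T$ to $r$ at the random time when $\ph$ reaches $n$ and to keep the endpoint in $[\sigma a,\sigma b]$ before invoking the lower half of Proposition~\ref{prop_estimate_l0}.
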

\begin{proof}
We will apply Proposition~\ref{prop_estimate_l0}. In order to do so, we pick $n\in\N$  
such that 
\begin{equation}
 \label{lu16:1}
 T = \frac{nT_+}{1+M\delta^2T_+}
\end{equation} 
satisfies Proposition~\ref{prop_lu_upperbound2} and is of order $\sigma^{-\nu}$ with $\nu<2$. Proposition~\ref{prop_lu_phi} shows that
with probability larger than $1-\e^{-\delta/(C_1\sigma^{2-\gamma})}$, 
\begin{equation}
 \label{lu16:2}
 \ph_t \leqs \frac{t}{T_+} + M\delta^2T
 \qquad
 \text{for all } t\leqs T\wedge \tau_{h}
\end{equation} 
for $h=\sigma^{\gamma}$  as before, with $\gamma >\nu$. In particular, we have $\ph_T\leqs n$. Together with
Propositions~\ref{prop_lu_upperbound} and~\ref{prop_lu_upperbound2} applied for $\kappa=2-\delta^2$, this
shows that for any $r_0\in E$
\begin{align}
\label{lu16:3} 
\nonumber
 \Ku_{n}(r_0,E) 
\leqs  {}& \frac{C}{\sigma^\alpha}
 \exp\biggset{-(2-\delta^2)\lambda_+ \frac{nT_+}{1+M\delta^2T_+}} 
  + \e^{-1/(C_1\sigma^{2-\gamma})} \\
&{} +\frac1{\sigma^{2\mu(1-\gamma)}} \exp\biggset{-\lambda_{+} T \biggbrak{\frac{2\mu}{1+\mu} -
\biggOrder{\frac{1}{\logs}}}}\;.
\end{align} 
Using $\log(a+b+c)\leqs \log 3+ \max\set{\log a, \log b,\log c}$ and the fact that $\nu<2$, we obtain 
\begin{align}
 \label{lu16:4}
 \nonumber%
&\frac1n\log \Ku_{n}(r_0,E) \\
&\qquad{} \leqs 
 \max\biggset{
 -(2-\delta^2) \frac{\lambda_+T_+}{1+M\delta^2T_+},
 - \frac{2\mu}{1+\mu}\frac{\lambda_{+} T_{+}}{1+M\delta^{2}T_{+}},
  \frac 1n \bigOrder{\logs + \sigma^{-(2-\gamma)}}}
\end{align} 
Since $n$ has order $\sigma^{-\nu}$, we can make
$\sigma$ small enough for all error terms to be of order $\delta^2$. Choosing first $\mu$, then the other parameters, 
proves the upper bound. The proof of the lower bound is similar. It is based on
Proposition~\ref{prop_lu_lowerbound}, a basic comparison between $r_{T}$ and the value of $r_{t}$ at the time $t$ when $\varphi_{t}$ reaches $n$, and the lower bound in
Proposition~\ref{prop_estimate_l0}.
\end{proof}


\subsection{The first-hitting distribution when starting in the QSD $\piOu$}
\label{ssec_sq}

In this section, we consider again the system~\eqref{lu01} describing the
dynamics near the unstable orbit. Our aim is now to estimate the
distribution of first-hitting locations of the unstable orbit when starting in
the quasistationary distribution $\piOu$. 

Consider first the linear process $r^0_t$ introduced in~\eqref{lu02}. By the
reflection principle, the distribution function of $\tau^0$, the first-hitting
time of $0$, is given by 
\begin{equation}
 \label{sq01}
 \bigprobin{r_0}{\tau^0 \leqs t} = 
 2 \Phi \biggpar{-\frac{r_0}{\sigma \sqrt{v_t}}}\;,
\end{equation} 
where $v_t$ is defined in~\eqref{lu04}, and
$\Phi(x)=(2\pi)^{-1/2}\int_{-\infty}^x \e^{-y^2/2}\6y$ is the distribution
function of the standard normal law.
The density of $\tau^0$ can thus be written as  
\begin{equation}
 \label{sq02}
 f_0(t) = 
 \frac{g_0(t)\transpose{g_0(t)}\e^{-2\lambda t}}{\sqrt{2\pi}v_t^{3/2}}
 \frac{r_0}{\sigma} \e^{-r_0^2/(2\sigma^2v_t)}
 =
 \frac{D_{rr}(1,t/T_+)\e^{-2\lambda t}}{\sqrt{2\pi}v_t}
 F\biggpar{\frac{r_0}{\sigma \sqrt{v_t}}}\;,
\end{equation} 
where 
\begin{equation}
 \label{sq03}
 F(u) = u \e^{-u^2/2}\;.
\end{equation} 
Observe in particular that 
$v_t$ converges as $t\to\infty$ to a constant $v_\infty >0$, and that 
\begin{equation}
 \label{sq04}
 v_t = v_\infty - \Order{\e^{-2\lambda t}}\;.
\end{equation} 
The density $f_0(t)$ thus asymptotically behaves like a periodically modulated
exponential. 

The following result establishes a similar estimate for a coarse-grained version
of the first-hitting density of the nonlinear process. We set 
\begin{equation}
 \label{sq05}
 \tau=\inf\setsuch{t>0}{r_t=0}
\end{equation} 
and write $V(\ph) = v_{T_+\ph}$. 

\begin{prop}[Bounds on the first-hitting distribution starting from a point]
\label{prop_sq1}  
Fix constants $\Delta, T > 0$ and $0<\eps<1/3$. Then there
exist $\sigma_0, \gamma, \kappa>0$, depending on $\Delta, \delta, \eps$ and
$T$, such that for all $\sigma<\sigma_0$ and $\ph_0\in[1,T/T_+]$,
\begin{align}
\nonumber
\bigprobin{r_0}{\ph_\tau\in[\ph_0,\ph_0+\Delta]} 
 ={}& \frac{T_+}{\sqrt{2\pi}}
 \int_{\ph_0}^{\ph_0+\Delta}
 \frac{D_{rr}(1,\ph)\e^{-2\lambda_+ T_+\ph}}{V(\ph)}
 F \biggpar{\frac{r_0}{\sigma \sqrt{V(\ph)}}} \6\ph
 \,\bigbrak{1+\Order{\sigma^\gamma}} \\
 &{}+ \Order{\e^{-\kappa/\sigma^{2\eps}}}
 \label{sq06}
\end{align} 
holds for all $\sigma^{2-3\eps} < r_0 < \delta$.
Furthermore, 
\begin{equation}
 \label{sq07}
 \bigprobin{r_0}{\ph_\tau\in[\ph_0,\ph_0+\Delta]} =
\Order{\sigma^{1-3\eps}}
\end{equation} 
for $0\leqs r_0 \leqs \sigma^{2-3\eps}$. 
\end{prop}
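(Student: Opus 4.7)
The plan is to compare the first-hitting distribution of the nonlinear process $r_{t}$ with the explicitly known distribution~\eqref{sq01}--\eqref{sq02} of the Gaussian process $r^{0}_{t}$ of~\eqref{lu02}, via a pathwise sandwiching argument.

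Proceeding as in the proofs of Propositions~\ref{prop_lu_upperbound} and~\ref{prop_lu_lowerbound}, write
\begin{equation*}
r_{t} = r_{0}\e^{\lambda_{+}t} + \sigma \int_{0}^{t}\e^{\lambda_{+}(t-s)} g_{0}(s)\6W_{s} + \cR_{t}\;,
\end{equation*}
where the remainder $\cR_{t}$ absorbs the nonlinear drift $b_{r}(r_{s},\ph_{s})$ and the diffusion perturbation $g_{r}(r_{s},\ph_{s})-g_{0}(s)$. Fix $\gamma_{1}\in(1/2,1)$, set $h=\sigma^{\gamma_{1}}$ and $h_{1}=\sqrt{h^{3}T}$. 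As long as $r_{s}<h$ and $|\ph_{s}-s/T_{+}|<M(h^{2}s+h_{1})$, the perturbation coefficients are of order $h$, so Proposition~\ref{prop_lu_phi} together with the Bernstein estimate of Lemma~\ref{lem_Bernstein} yield
\begin{equation*}
\Bigprob{\sup_{0\leqs t\leqs T\wedge\tau_{h}\wedge\tau_{\ph}} |\cR_{t}| > \sigma^{1+\eps}} \leqs \e^{-\kappa/\sigma^{2\eps}}
\end{equation*}
for some $\kappa>0$ depending on $T,\eps,\gamma_{1}$.

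On the complement of this bad event, $r_{t}$ is sandwiched between two Gaussian processes $r^{\pm}_{t}$ of the form~\eqref{lu02} with drift rates $\lambda_{+}\pm\Order{\sigma^{\gamma_{1}}}$ and initial conditions $r_{0}\pm\Order{\sigma^{1+\eps}}$, whose first-hitting times $\tau^{\pm}$ at $0$ satisfy $\tau^{-}\leqs\tau\leqs\tau^{+}$. The densities of $\tau^{\pm}$ are given by~\eqref{sq02} with the modified parameters. After the change of variable $t=T_{+}\ph$, using $\ph_{t}-t/T_{+}=\Order{\sigma^{\gamma_{1}}}$ on the good event, the probability $\bigprobin{r_{0}}{\ph_{\tau^{\pm}}\in[\ph_{0},\ph_{0}+\Delta]}$ equals the integral in~\eqref{sq06} up to perturbations of the arguments of $D_{rr}$, $V$ and $F$ of relative size $\sigma^{\gamma_{1}}$. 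For $r_{0}\in(\sigma^{2-3\eps},\delta)$, standard continuity arguments show that these translate into a uniform multiplicative error $1+\Order{\sigma^{\gamma}}$ on the integrand, yielding both upper and lower bounds compatible with~\eqref{sq06}, with additive error $\Order{\e^{-\kappa/\sigma^{2\eps}}}$ coming from the failure probability.

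For $0\leqs r_{0}\leqs\sigma^{2-3\eps}$ the multiplicative control of the density breaks down, since $F(u)=u\e^{-u^{2}/2}$ may be arbitrarily small. Instead, the elementary bound $F(u)\leqs u$ applied to the Gaussian density for $\tau^{+}$ yields
\begin{equation*}
\bigprobin{r_{0}}{\ph_{\tau^{+}}\in[\ph_{0},\ph_{0}+\Delta]}
\leqs \frac{T_{+}\,r_{0}}{\sqrt{2\pi}\,\sigma} \int_{\ph_{0}}^{\ph_{0}+\Delta} \frac{D_{rr}(1,\ph)\e^{-2\lambda_{+}T_{+}\ph}}{V(\ph)^{3/2}}\6\ph = \Order{\sigma^{1-3\eps}}\;,
\end{equation*}
since $V$ and $D_{rr}$ are bounded above and below by positive constants and $\ph_{0}\geqs 1$. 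Combined with the failure probability $\Order{\e^{-\kappa/\sigma^{2\eps}}}$, which is smaller for $\eps<1/3$ and $\sigma$ small, this gives~\eqref{sq07}.

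The main technical obstacle is to control how perturbations of relative size $\sigma^{\gamma_{1}}$ in the argument $u=r_{0}/(\sigma\sqrt{V(\ph)})$ translate into a multiplicative error on $F(u)$. Since $F'(u)/F(u)=1/u-u$ is unbounded as $u\to 0$, the relative error blows up for small $u$, which is precisely why the statement restricts to $r_{0}\geqs\sigma^{2-3\eps}$ (keeping $u\geqs c\sigma^{1-3\eps}$) and imposes $\eps<1/3$. The final exponent $\gamma$ and the constant $\kappa$ are then fixed by balancing this multiplicative error against the exponentially small failure probability.
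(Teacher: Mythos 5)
Your overall strategy coincides with the paper's: sandwich the nonlinear process between Gaussian comparison processes, compute their hitting densities via the reflection principle as in~\eqref{sq02}, and convert pathwise errors into a multiplicative error on $F$. However, the quantitative bookkeeping has a genuine gap. You bound the remainder $\cR_t$ by $\sigma^{1+\eps}$, i.e.\ you compare with Gaussian processes whose initial condition (equivalently, absorbing level) is shifted by $\Order{\sigma^{1+\eps}}$. In the hitting density this multiplies the argument $u=r_0/(\sigma\sqrt{V(\ph)})$ by $\mu=1+\Order{\sigma^{1+\eps}/r_0}$, and by the identity $F(\mu u)/F(u)=1+\Order{(\mu-1)(1+u^2)}$ (the paper's~\eqref{sq08:6}) the relative error contains the terms $\sigma^{1+\eps}/r_0$ and $\sigma^{1+\eps}r_0/\sigma^2=\sigma^{\eps-1}r_0$. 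At the lower end $r_0\sim\sigma^{2-3\eps}$ the first term is $\sigma^{4\eps-1}$, which diverges for $\eps<1/4$ (the shift then exceeds $r_0$ itself, so the lower comparison process starts below the absorbing level), and for $r_0\gtrsim\sigma^{1-\eps}$ the second term is at least of order $1$. So with your threshold the claimed uniform factor $1+\Order{\sigma^\gamma}$ does not follow on the whole range $(\sigma^{2-3\eps},\delta)$. The paper avoids this by keeping the path below $h=\sigma^{1-\eps}$, absorbing the quadratic drift \emph{multiplicatively} into modified exponents $\lambda^\pm=\lambda_+\pm Mh$ rather than into an additive remainder, and taking the additive shift only of size $H=\sigma^{2-2\eps}$ (cf.~\eqref{lu12:3},~\eqref{lu12:8}), which yields $H/r_0\leqs\sigma^\eps$ and $Hr_0/\sigma^2\leqs\sigma^{1-3\eps}$; the regime $r_0>\sigma^{1-\eps}$ is then handled separately by noting that both sides of~\eqref{sq06} are themselves $\Order{\e^{-\kappa/\sigma^{2\eps}}}$.

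Two further steps are missing. First, your good event is cut off at $\tau_h$, but paths that rise above $h$ before hitting $0$ inside the window (and starting points $r_0\in[h,\delta)$, for which your sandwich cannot even begin) do contribute; the paper controls them with the escape estimate~\eqref{lu14:3} inside the event $\Omega_2$ of~\eqref{sq08:1}. Second, the sandwich $\tau^-\leqs\tau\leqs\tau^+$ only compares distribution functions; it does not give that the window probability for $\tau$ is bounded by the window probability for $\tau^+$ over a slightly enlarged window, which you use both in the main estimate and in your proof of~\eqref{sq07}. One must instead bound the window probability by a difference of distribution functions of $\tau^-$ and $\tau^+$, and then show that the resulting boundary terms (the $\Phi$-differences in~\eqref{sq08:5}, estimated in~\eqref{sq08:12}) are of relative order $H/r_0\leqs\sigma^\eps$ compared with the main integral; for~\eqref{sq07} the correct elementary bound is $\probin{r_0}{\ph_\tau\in[\ph_0,\ph_0+\Delta]}\leqs 1-\probin{r_0}{\tau^0_+\leqs T_+(\ph_0-h_2)}+\fP(\Omega_1\cap\Omega_2^c)=\Order{r_0/\sigma}+\Order{\e^{-\kappa/\sigma^{2\eps}}}$, rather than integrating the density of $\tau^+$ over the window. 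With these corrections your argument becomes the paper's proof, including its three-case split in $r_0$.
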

\begin{proof}
We set $\ph_1=\ph_0+\Delta$, 
$h=\sigma^{1-\eps}$, $h_1=H=\sigma^{2-2\eps}$, and 
$h_2=M(h^2(T+1)+h_1)$. 
Let $r^\pm_t$ be the linear processes introduced in~\eqref{lu12:3}
and consider the events 
\begin{align}
 \label{sq08:1}
 \Omega_1 &= 
 \bigset{\ph_\tau \leqs \ph_0+\Delta}\;,  \\
 \Omega_2 &= \biggset{r_t\leqs h, \;
 \biggabs{\ph_t - \frac{t}{T_+}} \leqs h_2, \;
 r^-_t - H\e^{\lambda^-(t-T)} \leqs r_t \leqs r^+_t + H\e^{\lambda^+(t-T)}
\:\forall t\leqs \tau
 }\;.
 \nonumber
\end{align} 
Proposition~\ref{prop_lu_phi}, \eqref{lu12:4} and the estimates~\eqref{lu12:8}
and~\eqref{lu14:3} imply that there exists $\kappa>0$ such that 
\begin{equation}
 \label{sq08:2}
 \fP(\Omega_1\cap\Omega_2^c) \leqs 3\e^{-\kappa/\sigma^{2\eps}}\;.
\end{equation} 
Define the stopping times 
\begin{equation}
 \label{sq08:3}
 \tau^0_\pm 
 = \inf\bigsetsuch{t>0}{r^\pm_t = \mp H\e^{\lambda^\pm
(t-T)}}\;.
\end{equation} 
Since the processes $r^\pm_t - r_0\e^{\lambda^\pm t}$ satisfy linear equations
similar to~\eqref{lu02}, we can compute the densities of $\tau^0_\pm$, in
perfect analogy with~\eqref{sq02}. Scaling by $T_+$ for later convenience, we
obtain that the densities of $\tau^0_\pm/T_+$ are given by 
\begin{equation}
 \label{sq08:4} 
 f_\pm(\ph) = \frac{T_+}{\sqrt{2\pi}} 
 \frac{D_{rr}(1,\ph)\e^{-2\lambda^\pm T_+\ph}}{V(\ph)} 
 F \biggpar{\frac{r_0\pm H\e^{-\lambda^\pm T}}{\sigma \sqrt{V(\ph)}}}\;.
\end{equation} 
By definition of $\Omega_1$, $\ph_\tau \leqs \ph_0$
implies $\tau^0_- \leqs T_+(\ph_0+h_2)$ and $\tau^0_+ \leqs T_+(\ph_0-h_2)$
implies $\ph_\tau \leqs \ph_0$ on $\Omega_1$.
Therefore, we have 
\begin{align}
\nonumber
 \bigprobin{r_0}{\ph_\tau\in[\ph_0,\ph_1]}
 \leqs{}& \bigprobin{r_0}{\tau^0_-/T_+ \leqs \ph_1 + h_2}
 - \bigprobin{r_0}{\tau^0_+/T_+ \leqs \ph_0 - h_2} + \fP(\Omega_1\cap\Omega_2^c)
\\
 \label{sq08:5}
 ={}& \int_{\ph_0 - h_2}^{\ph_1 + h_2} f_-(\ph)\6\ph \\
 &{}+ 2\Phi\biggpar{-\frac{r_0-H\e^{-\lambda^-T}}{\sigma \sqrt{V(\ph_0-h_2)}}} 
 - 2\Phi\biggpar{-\frac{r_0+H\e^{-\lambda^+T}}{\sigma \sqrt{V(\ph_0-h_2)}}} 
 + \fP(\Omega_1\cap\Omega_2^c)\;, 
\nonumber
\end{align} 
and, similarly, 
\begin{align}
 \label{sq08:5b}
 \bigprobin{r_0}{\ph_\tau\in[\ph_0,\ph_1]}
 \geqs{}& \int_{\ph_0 + h_2}^{\ph_1 - h_2} f_+(\ph)\6\ph \\
 &{}- 2\Phi\biggpar{-\frac{r_0-H\e^{-\lambda^-T}}{\sigma \sqrt{V(\ph_0+h_2)}}} 
 + 2\Phi\biggpar{-\frac{r_0+H\e^{-\lambda^+T}}{\sigma \sqrt{V(\ph_0+h_2)}}} 
 - \fP(\Omega_1\cap\Omega_2^c)\;.
\nonumber
\end{align} 
We now distinguish three cases, depending on the value of $r_0$. 

\begin{enum}
\item 	{\it Case $r_0 > \sigma^{1-\eps}$.} 
All terms on the right-hand side of~\eqref{sq08:5} and~\eqref{sq06} are of order
$\e^{-\kappa/\sigma^{2\eps}}$ for some $\kappa>0$, so that the result follows
immediately. 

\item 	{\it Case $\sigma^{2-3\eps} \leqs r_0 \leqs \sigma^{1-\eps}$.} Here it
is useful to notice that for any $\mu>0$ and all $u$, 
\begin{equation}
 \label{sq08:6} 
 \frac{F(\mu u)}{F(u)} = \mu \e^{-(\mu^2-1)u^2/2}
 = 1 + \bigOrder{(\mu-1)(1+u^2)}\;.
\end{equation} 
Applying this with $\mu = (r_0-H\e^{-\lambda^- T})/r_0 = 1 + \Order{H/r_0}$
shows that 
\begin{equation}
 \label{sq08:7}
 F\biggpar{\frac{r_0-H\e^{-\lambda^- T}}{\sigma\sqrt{V(\ph)}}}
 = F\biggpar{\frac{r_0}{\sigma\sqrt{V(\ph)}}}
 \bigbrak{1 + \Order{\sigma^\eps} + \Order{\sigma^{1-3\eps}}}\;,
\end{equation}
where the two error terms bound $H/r_0$ and
$(H/r_0)(r_0^2/\sigma^2)$, respectively. 
This shows that 
\begin{align}
\nonumber
 \int_{\ph_0 - h_2}^{\ph_1 + h_2} f_-(\ph)\6\ph 
 ={}& \frac{T_+}{\sqrt{2\pi}} \int_{\ph_0 - h_2}^{\ph_1 + h_2} 
 \frac{D_{rr}(1,\ph)\e^{-2\lambda_+ T_+\ph}}{V(\ph)} 
 F \biggpar{\frac{r_0}{\sigma \sqrt{V(\ph)}}} \6\ph
 \\
 &{}\times  
 \bigbrak{1 + \Order{\sigma^\eps} + \Order{\sigma^{1-3\eps}}}
 \label{sq08:8}
\end{align}
(note that replacing $\lambda^-$ by $\lambda_+$ produces an error of order $h$
which is negligible).  
The next thing to note is that, by another application of~\eqref{sq08:6}, 
\begin{equation}
 \label{sq08:9}
 F\biggpar{\frac{r_0}{\sigma\sqrt{V(\ph+x)}}}
 = F\biggpar{\frac{r_0}{\sigma\sqrt{V(\ph)}}}
 \bigbrak{1 + \Order{x\sigma^{-2\eps}}}\;.
\end{equation}
As a consequence, the integrand in~\eqref{sq08:5} changes by a factor of order
$1$ at most on intervals of order $\sigma^{2\eps}$, and therefore, 
\begin{equation}
 \label{sq08:10}
 \int_{\ph_0}^{\ph_0 + h_2} f_-(\ph)\6\ph 
 \leqs \int_{\ph_0}^{\ph_0 + \sigma^{2\eps}} f_-(\ph)\6\ph \cdot
\Order{\sigma^{2-4\eps}}\;.
\end{equation} 
It follows that 
\begin{align}
\nonumber
 \int_{\ph_0 \mp h_2}^{\ph_1 \pm h_2} f_\mp(\ph)\6\ph 
 ={}& \frac{T_+}{\sqrt{2\pi}} \int_{\ph_0}^{\ph_1} 
 \frac{D_{rr}(1,\ph)\e^{-2\lambda_+ T_+\ph}}{V(\ph)} 
 F \biggpar{\frac{r_0}{\sigma \sqrt{V(\ph)}}} \6\ph
 \\
 &{}\times  
 \bigbrak{1 + \Order{\sigma^\eps} + \Order{\sigma^{1-3\eps}} +
\Order{\sigma^{2-4\eps}}}\;,
 \label{sq08:11}
\end{align}
where the last error term is negligible. 
Finally, the difference of the two terms in~\eqref{sq08:5} involving $\Phi$ is
bounded above by 
\begin{equation}
 \label{sq08:12}
 \frac{2}{\sqrt{2\pi}} \frac{2 H \e^{-\lambda^-T}}{\sigma \sqrt{V(\ph_0-h_2)}}
 \exp\biggset{-\frac{(r_0-H\e^{-\lambda^-T})^2}{2\sigma^2 V(\ph_0-h_2)}}\;.
\end{equation} 
The ratio of~\eqref{sq08:12} and~\eqref{sq08:11} has order $H/r_0 \leqs
\sigma^\eps$. This proves the upper bound in~\eqref{sq06}, and the proof of the
lower bound is analogous. 

\item 	{\it Case $0\leqs r_0 <\sigma^{2-3\eps}$.}
In this case, the comparison with $r^-_t$
becomes useless. Instead of~\eqref{sq08:5} we thus write 
\begin{align}
\nonumber
\bigprobin{r_0}{\ph_\tau\in[\ph_0,\ph_1]}
 &\leqs 1 - \bigprobin{r_0}{\tau^0_+/T_+ \leqs \ph_0 - h_2} +
 \fP(\Omega_1\cap\Omega_2^c) \\
\nonumber
 &= 1 - 2 \Phi\biggpar{-\frac{r_0+H\e^{-\lambda_+T}}{\sigma
\sqrt{V(\ph_0-h_2)}}} + \fP(\Omega_1\cap\Omega_2^c) \\
 &= \Order{r_0/\sigma} + \fP(\Omega_1\cap\Omega_2^c) =
\Order{\sigma^{1-3\eps}}\;.
 \label{sq08:99}
\end{align} 
This proves~\eqref{sq07}. 
\qed
\end{enum}
\renewcommand{\qed}{}
\end{proof}

We now would like to obtain a similar estimate for the hitting distribution
when starting in the QSD $\piOu$ instead of a fixed point $r_0$. Unfortunately,
we do not have much information on $\piOu$. Still, we can draw on the fact that
the distribution of the process conditioned on survival approaches the QSD. 
To do so, we need the existence of a spectral gap for the kernel 
$\Ku$, which will be obtained in Section~\ref{sec_el}. 

\begin{prop}[Bounds on the first-hitting distribution starting from the QSD]
\label{prop_sq2} 
Let $\loneu$ be the second eigenvalue of $\Ku$, and 
assume the spectral gap condition $\abs{\loneu}/\lOu \leqs \rho < 1$ holds  
uniformly in $\sigma$ as $\sigma\to0$.
Fix constants $0<\Delta<\e^{-1/9}$ and $0<\eps<1/3$. There exist constants
$\sigma_0, \gamma, \kappa >0$ such that for all $\sigma < \sigma_0$ and
$\ph_0\in[0,1]$, 
\begin{equation}
\bigprobin{\piOu}{\ph_\tau\in[\ph_0,\ph_0+\Delta]}
 = Z(\sigma) \int_{\ph_0}^{\ph_0+\Delta}
D_{rr}(1,\ph)\e^{-2\lambda_+ T_+\ph}\6\ph 
 \,\brak{1+\Order{\Delta^\beta} + \Order{\Delta^2\abs{\log\Delta}}} 
 \label{sq10}
\end{equation} 
where $Z(\sigma)$ does not depend on $\ph_0$, and $\beta =
2\abs{\log\rho}/(\lambda_+ T_+)$. 
\end{prop}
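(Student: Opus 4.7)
The plan is to exploit the QSD scaling relation together with Proposition~\ref{prop_sq1}. Setting $\phi(\ph_0,\Delta) := \bigprobin{\piOu}{\ph_\tau \in [\ph_0, \ph_0+\Delta]}$, the quasistationarity $\piOu\Ku^n = (\lOu)^n\piOu$ combined with the strong Markov property at the crossings $\ph_t = k \in \N$ yields the scaling
\begin{equation*}
\phi(\ph_0+n,\Delta) = (\lOu)^n\,\phi(\ph_0,\Delta)\qquad \forall n\in\N.
\end{equation*}
Thus I will evaluate $\phi$ at a shifted argument $\ph_0+n$ for a carefully chosen $n$, and transport the result back by the factor $(\lOu)^{-n}$.

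To compute $\phi(\ph_0+n,\Delta)$, I integrate the pointwise estimate~\eqref{sq06} against $\piOu(\6r_0)$, using~\eqref{sq07} together with a Harnack-type upper bound on $\piOu([0,\sigma^{2-3\eps}])$ (consequence of Lemma~\ref{lem_Harnack0} applied to the left eigenfunction $h_0^*$) to dispose of the small-$r_0$ tail. The exponential convergence $V(\ph) = v_\infty + \bigOrder{\e^{-2\lambda_+T_+\ph}}$ from~\eqref{sq04} and the periodicity of $D_{rr}(1,\cdot)$ then permit the change of variable $\ph\mapsto\ph-n$ inside the integral, giving
\begin{equation*}
\phi(\ph_0+n,\Delta) = Z_0(\sigma)\,\e^{-2\lambda_+T_+ n}\!\int_{\ph_0}^{\ph_0+\Delta}\!D_{rr}(1,\ph)\e^{-2\lambda_+T_+\ph}\6\ph\,\bigbrak{1 + \bigOrder{\sigma^\gamma} + \bigOrder{\e^{-2\lambda_+T_+ n}}},
\end{equation*}
where $Z_0(\sigma) = \frac{T_+}{\sqrt{2\pi}\,v_\infty}\int F\bigpar{r/(\sigma\sqrt{v_\infty})}\,\piOu(\6r)$.

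Dividing the previous display by $(\lOu)^n$ and invoking Theorem~\ref{thm_lOu} to compare $\lOu$ with $\e^{-2\lambda_+T_+}$ yields the claimed form~\eqref{sq10}. The spectral gap hypothesis $\abs{\loneu}/\lOu\leqs\rho$ enters to ensure consistency of the scheme: the Fredholm expansion $(\Ku)^n(r_0,\6y) = (\lOu)^n h_0(r_0)\,h_0^*(y)\6y\brak{1+\bigOrder{\rho^n}}$ recalled in Section~\ref{ssec_mce} shows that the above computation is insensitive, up to $\bigOrder{\rho^n}$ errors, to the precise realisation of the initial distribution, so that the emerging prefactor $Z(\sigma)$ is effectively $n$-independent.

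The main obstacle is the calibration of $n$: it must be large enough that both $\rho^n$ and $\e^{-2\lambda_+T_+ n}$ are within the target accuracy, yet small enough that the discrepancy $(\e^{-2\lambda_+T_+}/\lOu)^n = 1 + \bigOrder{n\delta^2}$ from Theorem~\ref{thm_lOu} does not spoil the leading order. Choosing $n = 2\abs{\log\Delta}/(\lambda_+T_+)$ gives $\rho^n = \Delta^\beta$ with $\beta = 2\abs{\log\rho}/(\lambda_+T_+)$ and $\e^{-2\lambda_+T_+ n} = \Delta^4$; the accumulated errors then combine into $\bigOrder{\Delta^\beta} + \bigOrder{\Delta^2\abs{\log\Delta}}$, and the restriction $\Delta < \e^{-1/9}$ ensures that this $n$ is large enough for Proposition~\ref{prop_sq1} to apply at the shifted argument.
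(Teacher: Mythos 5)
Your skeleton is the right one and in fact mirrors the paper: shift the observation window by $n\sim\abs{\log\Delta}/(\lambda_+T_+)$ so that $V(\ph)$ is close to $v_\infty$ there, apply Proposition~\ref{prop_sq1}, and transport back using quasistationarity and the spectral gap (your exact identity $\phi(\ph_0+n,\Delta)=(\lOu)^n\phi(\ph_0,\Delta)$ is fine, and the factor $(\e^{-2\lambda_+T_+}/\lOu)^n$ you worry about is $\ph_0$-independent, hence harmless). But there is a genuine gap at the central analytic step, namely your middle display. You integrate the pointwise estimate~\eqref{sq06} directly against the unknown measure $\piOu$ and assert that replacing $V(\ph)$ by $v_\infty$ inside $F$ costs only a relative error $\Order{\e^{-2\lambda_+T_+n}}$. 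By~\eqref{sq08:6} the pointwise error is $\Order{(V(\ph)/v_\infty-1)(1+r^2/\sigma^2)}$, which is \emph{not} uniform in $r$; turning it into a relative error for $\int F\bigpar{r/(\sigma\sqrt{V(\ph)})}\piOu(\6r)$ requires (i) a lower bound on the bulk, i.e.\ that $\piOu$ gives mass of a controlled size to an interval $[\sigma a,\sigma b]$, and (ii) an upper bound on the contribution of the tail $r\gtrsim\sigma\sqrt{\abs{\log\Delta}}$ relative to that bulk. The same lower bound is needed to show that your additive errors ($\Order{\e^{-\kappa/\sigma^{2\eps}}}$ and the small-$r_0$ contribution via~\eqref{sq07}) are negligible compared with the main term. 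No such information on $\piOu$ is available -- the paper states explicitly that little is known about $\piOu$ -- and your appeal to Lemma~\ref{lem_Harnack0} for $h_0^*$ does not help: that lemma concerns the harmonic-measure density as a function of the interior starting point, not the left eigenfunction, and in any case its factor $\e^{C/\sigma^2}$ is far too weak to control any mass ratio here.

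This is precisely why the paper does not start from $\piOu$ at all. It writes $\bigprobin{r_0}{\ph_\tau\in 2n+I}$ in two ways (Markov property at $\ph=n$ and at $\ph=2n$, then the Fredholm expansion), obtaining $\int_0^\delta \ku_n(\sigma,r)\,\bigprobin{r}{\ph_\tau\in n+I}\6r=(\lOu)^{2n}N(\sigma)\,\bigprobin{\piOu}{\ph_\tau\in I}\brak{1+\Order{\Delta^\beta}}$, so that the QSD is replaced by the explicit measure $\ku_n(\sigma,\cdot)$ started from the concrete point $r_0=\sigma$. For that measure the missing ingredients are available: Proposition~\ref{prop_lu_lowerbound} gives $\Ku_n(\sigma,[\sigma a,\sigma b])\geqs C\e^{-2n\lambda T_+}\geqs C\Delta^2$, which yields the lower bound $Z_0\geqs\const\,\sigma\Delta^2$, and the tail is cut at $u=3\sqrt{\abs{\log\Delta}}$ (this is exactly where the hypothesis $\Delta<\e^{-1/9}$ and the error terms $\Order{\Delta^{5/2}\sqrt{\abs{\log\Delta}}}+\Order{\Delta^2\abs{\log\Delta}}$ in the statement come from). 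Without an analogue of these two estimates for $\piOu$ your argument does not close; if you want to keep your route, you must either prove a lower bound on $\piOu([\sigma a,\sigma b])$ and an upper bound on its weighted tail, or -- as the paper does -- re-express the QSD quantity through the $n$-step kernel from a fixed point before invoking Proposition~\ref{prop_sq1}.
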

\begin{proof}
Let $n\in\N$ be such that 
$\Delta^2 < \e^{-2n\lambda T_+} \leqs \e^{2\lambda T_+}\Delta^2$. We let
$I=[\ph_0,\ph_0+\Delta]$, write $n+I$ for the translated interval
$[n+\ph_0,n+\ph_0+\Delta]$, and $\ku_n$ for the density of $\Ku_n$.  
For any initial condition $r_0\in(0,\delta)$, we have 
\begin{align}
\nonumber
\int_0^\delta \ku_n(r_0,r) 
\bigprobin{r}{\ph_\tau\in n+I} \6r
&= \bigprobin{r_0}{\ph_\tau\in 2n+I} \\
\nonumber
&=\int_0^\delta \ku_{2n}(r_0,r) 
\bigprobin{r}{\ph_\tau\in I} \6r \\
\nonumber
&= \int_0^\delta (\lOu)^{2n} N(r_0) \piOu(r) 
\bigprobin{r}{\ph_\tau\in I} \6r \,\brak{1+\Order{\rho^{2n}}} \\
&= (\lOu)^{2n} N(r_0) \bigprobin{\piOu}{\ph_\tau\in I}  
\,\brak{1+\Order{\Delta^\beta}}\;,
\label{sq11:1} 
\end{align}
where $N(r_0)$ is a normalisation, cf.\ \eqref{resrp6}, 
and we have used $\rho^{2n} = \e^{-2n\abs{\log\rho}} \leqs
\rho^{-2}\Delta^\beta$. 
It is thus sufficient to compute the left-hand side for a convenient $r_0$,
which we are going to choose as $r_0=\sigma$. 
By Proposition~\ref{prop_sq1}, we have 
\begin{align}
\nonumber
\int_0^\delta \ku_n(\sigma,r) 
\bigprobin{r}{\ph_\tau\in n+I} \6r
={}& \frac{T_+}{\sqrt{2\pi}} 
\int_{n+I} \frac{D_{rr}(1,\ph)\e^{-2\lambda_+ T_+\ph}}{V(\ph)} J_0(\ph) \6\ph 
\,\brak{1+\Order{\sigma^\gamma}} 
\\
&{}+ \Order{\sigma^{2-3\eps}} + 
\Order{\e^{-\kappa/\sigma^{2\eps}}}\;,
\label{sq11:2} 
\end{align}
where we have split the integral at $r=\sigma^{2-3\eps}$, 
bounded $\Ku_n(\sigma,[0,\sigma^{2-3\eps}])$ by $1$ and introduced
\begin{equation}
 \label{sq11:3}
 J_0(\ph) = \int_{\sigma^{2-3\eps}}^{\delta} 
 \ku_n(\sigma,r) F \biggpar{\frac{r}{\sigma\sqrt{V(\ph)}}}\6r \;.
\end{equation} 
Note that for $\ph\in n+I$, one has $V(\ph) = v_\infty[1 + \Order{\Delta^2}]$.
To complete the proof it is thus sufficient to show that 
$J_0(\ph) = Z_0 \brak{1 + \Order{\Delta^2}\abs{\log\Delta}}$, where $Z_0$ does
not depend on $\ph_0$ and satisfies $Z_0 \geqs \const\, \sigma \Delta^2$. 

We perform the scaling $r=\sigma\sqrt{v_\infty}\,u$ and write 
\begin{equation}
 \label{sq11:4}
 J_0(\ph) = \sigma\sqrt{v_\infty}\,
\int_{\sigma^{1-3\eps}/\sqrt{v_\infty}}^{\delta/\sigma\sqrt{v_\infty}} 
 \ku_n(\sigma,\sigma\sqrt{v_\infty}\,u) F(\mu u)\6u \;,
\end{equation} 
where $\mu = \sqrt{v_\infty/V(\ph)} = 1 + \Order{\Delta^2}$
satisfies $\mu\geqs1$. Let
\begin{equation}
 \label{sq11:4b}
 Z_0 = \sigma\sqrt{v_\infty}\,
\int_{\sigma^{1-3\eps}/\sqrt{v_\infty}}^{\delta/\sigma\sqrt{v_\infty}} 
 \ku_n(\sigma,\sigma\sqrt{v_\infty}\,u) F(u)\6u\;.
\end{equation}
By the first inequality in \eqref{sq08:6}, we immediately have the upper bound 
\begin{equation}
  \label{sq11:5}
 J_0(\ph) \leqs \mu Z_0 \leqs Z_0
 \,\brak{1+\Order{\Delta^2}}\;. 
\end{equation} 
To obtain a matching lower bound, we first show that the integral is dominated
by $u$ of order $1$. Namely, for $0<a<1<b$ of order $1$, 
\begin{align}
\nonumber
 Z_0 
 &\geqs \sigma\sqrt{v_\infty}\, \int_{a/\sqrt{v_\infty}}^{b/\sqrt{v_\infty}} 
 \ku_n(\sigma,\sigma\sqrt{v_\infty}\,u) F(u)\6u \\
 &\geqs \sigma\sqrt{v_\infty}\, C_0 \Ku_n(\sigma,[\sigma a,\sigma b])
 \label{sq11:6}
\end{align} 
where $C_0 = F(a/\sqrt{v_\infty}\,) \wedge F(b/\sqrt{v_\infty}\,)$.
Now Proposition~\ref{prop_lu_lowerbound} implies that 
\begin{equation}
 \label{sq11:7} 
\Ku_n(\sigma,[\sigma a,\sigma b]) 
\geqs C \e^{-2n\lambda T_+}
\geqs C \Delta^2\;.
\end{equation} 
Furthermore, since $3 \sqrt{\abs{\log\Delta}} > 1$, $F$ takes its maximal value
at the lower integration limit, and we have 
\begin{align}
\nonumber
 \int_{3\sqrt{\abs{\log\Delta}}}^{\delta/\sigma\sqrt{v_\infty}} 
 \ku_n(\sigma,\sigma\sqrt{v_\infty}\,u) F(u)\6u 
 &\leqs F \Bigpar{3\sqrt{\abs{\log\Delta}}}
\Ku_n\Bigpar{[3\sigma\sqrt{v_\infty\abs{\log\Delta}},\delta]} \\
\nonumber
&\leqs 3 \Delta^{9/2}\sqrt{\abs{\log\Delta}}  \cdot 1\\
&\leqs \frac{3 \Delta^{5/2}\sqrt{\abs{\log\Delta}}}{C_0C\sigma\sqrt{v_\infty}} 
Z_0\;.
 \label{sq11:8}
\end{align} 
Using again \eqref{sq08:6} and the above estimates, we get the lower bound 
\begin{align}
\nonumber
 J_0(\ph) &\geqs \sigma\sqrt{v_\infty}\,
\int_{\sigma^{1-3\eps}/\sqrt{v_\infty}}^{3\sqrt{\abs{\log\Delta}}} 
 \ku_n(\sigma,\sigma\sqrt{v_\infty}\,u) F(u)
 \e^{-(\mu^2-1)u^2/2}\6u \\ 
\nonumber
 &\geqs 
 \biggbrak{Z_0 - \sigma\sqrt{v_\infty}\,
 \int_{3\sqrt{\abs{\log\Delta}}}^{\delta/\sigma\sqrt{v_\infty}} 
 \ku_n(\sigma,\sigma\sqrt{v_\infty}\,u) F(u)\6u }
 \bigbrak{1 - \Order{\Delta^2\abs{\log\Delta}}} \\
 &= Z_0 \Bigbrak{1 - \bigOrder{\Delta^{5/2} \sqrt{\abs{\log\Delta}}\,}
 - \Order{\Delta^2\abs{\log\Delta}}}\;,
\label{sq11:9}
\end{align} 
which completes the proof.
\end{proof}



\subsection{The principal eigenvalue $\lOs$ and the spectral gap}
\label{ssec_sg}

We consider in this section the system 
\begin{align}
\nonumber
\6r_t &= \bigbrak{-\lambda_- r_t + b_r(r_t,\ph_t)} \6t 
+ \sigma g_r(r_t,\ph_t) \6W_t\;, \\
\6\ph_t &= \Bigbrak{\frac{1}{T_+} + b_\ph(r_t,\ph_t)} \6t 
+ \sigma g_\ph(r_t,\ph_t) \6W_t\;,
\label{ls01} 
\end{align}
describing the dynamics away from the unstable orbit. We have redefined $r$ in
such a way that the stable orbit is now located in $r=0$, and that the unstable
orbit is located in $r=1$.
 
In what follows we consider the Markov chain of the process killed upon
reaching level $1-\delta$ where $\delta\in(1/2,1)$, whose kernel we
denote $\Ks$. The corresponding state space is given by $E=[-L,1-\delta]$ for
some $L\geqs1$. 

\begin{prop}[Lower bound on the principal eigenvalue $\lOs$]
\label{prop_l0s} 
There exists a constant $\kappa>0$ such that 
\begin{equation}
 \label{ls02}
 \lOs \geqs 1 - \e^{-\kappa/\sigma^2}\;.
\end{equation} 
\end{prop}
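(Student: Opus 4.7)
\medskip

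\noindent\textbf{Proof proposal.} The plan is to apply the lower bound in Proposition~\ref{prop_estimate_l0} with $n=1$ to a small interval $A$ around the stable orbit. Heuristically, a path starting at distance $\Order{1}$ from the unstable killing level cannot reach it within a single period of $\ph$ except through an exponentially unlikely large deviation, so $\inf_{x\in A} \Ks(x,A)$ is already $1-\Order{\e^{-\kappa/\sigma^2}}$.

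First, I would use~\eqref{ls01} together with~\eqref{csd9} to pick $a\in(0,1-\delta)$ small enough that the radial component of the deterministic flow satisfies $f_r^0(r,\ph)\sign(r) \leqs -\frac12\lambda_- \abs{r}$ for all $\abs{r}\leqs a$ and all $\ph$. Then $A:=[-a,a]\subset E$ is \emph{strictly} invariant under the deterministic flow: the solution $r^{\det}_t$ starting at any $r_0\in A$ satisfies $\abs{r^{\det}_t}\leqs \e^{-\lambda_- t/2}\abs{r_0}$ for all $t\geqs 0$, so $r^{\det}_t$ remains in $[-a/2,a/2]$ for $t\geqs 2/\lambda_-$, and in $A$ for all $t\geqs 0$.

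Next, for any $x\in A$ I would control the deviation $\zeta_t := r_t - r^{\det}_t$ on the fixed horizon $T:=2T_+$. Writing
\begin{equation*}
\zeta_t = \int_0^t \bigbrak{f_r(r_s,\ph_s)-f_r(r^{\det}_s,\ph^{\det}_s)}\,\6s
+ \sigma\int_0^t g_r(r_s,\ph_s)\,\6W_s\;,
\end{equation*}
and using that $f_r$ is Lipschitz on the compact set $\overline{A}\times\fS^1$ (uniformly in $\sigma$ up to the $\sigma^2$ correction from Proposition~\ref{prop_css}), Gr\"onwall's inequality applied on $[0,T\wedge\tau]$, where $\tau$ is the first exit of $r_t$ from $A$, yields
\begin{equation*}
\sup_{t\leqs T\wedge\tau} \abs{\zeta_t} \leqs \sigma\, C_T\,\sup_{t\leqs T\wedge\tau}\abs{M_t}\;,
\qquad
M_t = \int_0^t g_r(r_s,\ph_s)\,\6W_s\;.
\end{equation*}
A Bernstein-type martingale estimate (as in Lemma~\ref{lem_Bernstein} / Proposition~\ref{prop_lu_phi}) then gives $\prob{\sup_{t\leqs T\wedge\tau}\abs{\zeta_t}\geqs a/4}\leqs \e^{-\kappa_1/\sigma^2}$ for some $\kappa_1>0$ depending on $a$, $T$, and the ellipticity constants of $g_r$. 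An analogous Bernstein bound on $\ph_t-t/T_+$ gives $\prob{\tau_1>T}\leqs \e^{-\kappa_2/\sigma^2}$, since $f_\ph$ is bounded away from $0$.

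On the intersection of these two high-probability events, $r_t$ stays within $a/4$ of $r^{\det}_t\in[-a/2,a/2]$ throughout $[0,\tau_1]$, so in particular it never reaches the killing level $1-\delta$, and $R_1=r_{\tau_1}\in A$. A union bound therefore yields $\inf_{x\in A}\Ks(x,A)\geqs 1-\e^{-\kappa/\sigma^2}$ for $\kappa=\frac12\min(\kappa_1,\kappa_2)$, and Proposition~\ref{prop_estimate_l0} with $n=1$ gives the claimed bound $\lOs\geqs 1-\e^{-\kappa/\sigma^2}$. The only mildly delicate point is that the comparison $\zeta_t$ must be controlled up to the \emph{random} time $\tau_1$; this is handled by working on the deterministic horizon $T=2T_+$ and then absorbing the event $\{\tau_1>T\}$ into the exceptional set. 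Contraction of the deterministic flow (crucial to avoid an $\e^{\lambda T}$-type blow-up in the Gr\"onwall constant) ensures that $\kappa$ is independent of $\sigma$.
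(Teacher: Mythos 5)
Correct, and essentially the paper's own argument: a small interval $A$ around the stable orbit, contraction of the deterministic flow, a concentration estimate keeping the random sample path near the deterministic solution up to an exponentially small (in $1/\sigma^2$) probability (where the paper simply invokes Theorem~5.1.18 of its book reference, you rederive it by Gr\"onwall plus Bernstein bounds), and finally Proposition~\ref{prop_estimate_l0} with $n=1$. Two small touch-ups: the Gr\"onwall comparison must control the deviation in both coordinates $(r,\ph)$, not just $\zeta_t$, since $f_r(r_s,\ph_s)-f_r(r^{\det}_s,\ph^{\det}_s)$ also involves the angular difference; and the fixed margin $a/4$ should be replaced by a fraction of $(1-\e^{-\lambda_- T_+/2})\,a$, because otherwise the endpoint $r_{\tau_1}$ is not guaranteed to land back in $A$ when the one-period contraction $\lambda_- T_+$ is small (this is exactly the role of the gap $h-h_1$ in the paper's proof).
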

\begin{proof}
Let $A=[-h,h]$ for some $h>0$. If $h$ is sufficiently small, the stability of
the periodic orbit in $r=0$ implies that any deterministic solution starting in
$(r_0,0)$ with $r_0\in A$ satisfies $\abs{r_1}\leqs h_1<h$ when it reaches the 
line $\ph=1$ at a point $(r_1,1)$. In fact, by slightly enlarging $h_1$ we can ensure that
$\abs{r_t}\leqs h_1<h$ whenever $\ph_t$ is in a small neighbourhood of~$1$. 
Using, for instance,~\cite[Theorem~5.1.18]{BGbook},\footnote{This might seem like slight overkill, but
it works.} one obtains that the random sample path with initial condition $(r_0,0)$
stays, on timescales of order $1$, in a ball around the deterministic solution
with high probability. The probability of leaving the ball is exponentially
small in $1/\sigma^2$. This shows that $K(x,A)$ is exponentially close to $1$
for all $x\in A$ and proves the result, thanks to
Proposition~\ref{prop_estimate_l0}. 
\end{proof}

Note that in the preceding proof we showed that for $A=[-h,h]$, there exist 
constants $C_1, \kappa>0$ such that 
\begin{equation}
 \label{ls03a}
 \sup_{x\in A} K(x,\Ac) + p_{\text{\rm kill}}(A)
 \leqs C_1 
 \e^{-\kappa/\sigma^2}\;.
\end{equation}
The following proposition gives a similar estimate  allowing for initial conditions $x\in E$.

\begin{prop}[Bound on the \lq\lq contraction constant\rq\rq]
\label{prop_contraction} 
Let $A=[-h,h]$. For any $h>0$, there exist $n_1\in\N$ and 
constants $C_1, \kappa>0$ such that 
\begin{equation}
 \label{ls03}
 \gamma^{n_1}(A) \defby 
 \sup_{x\in E} K_{n_1}(x,\Ac) \leqs C_1 
 \e^{-\kappa/\sigma^2}\;.
\end{equation}
\end{prop}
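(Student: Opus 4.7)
The strategy combines the global attractivity of the stable orbit at $r=0$ under the deterministic Poincar\'e map with a Freidlin--Wentzell sample-path concentration estimate, exploiting the fact that paths killed on $\set{r=1-\delta}$ contribute only to the cemetery and not to $\Ac \subset E$. In the coordinates of~\eqref{ls01} the stable orbit sits at $r=0$ with linearised contraction rate $\lambda_-$, while the unstable orbit at $r=1$ lies outside the compact state space $E = [-L, 1-\delta]$. The deterministic Poincar\'e map $P(r_0) = r^{\det}_{T_+}(r_0)$ therefore has $r=0$ as a globally attracting fixed point on $E$, and compactness produces an integer $n_1 = n_1(h,\delta,L)$, independent of $\sigma$, such that $\abs{P^{n_1}(r_0)} \leqs h/2$ for all $r_0 \in E$.

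For this fixed $n_1$, let $(r_t,\ph_t)$ denote the \emph{unkilled} solution of~\eqref{ls01} started at $(r_0,0)$, and $(r^{\det}_t,\ph^{\det}_t)$ the corresponding deterministic one. A Gronwall plus Bernstein estimate in the spirit of~\cite[Theorem~5.1.18]{BGbook}, applied on the fixed window $[0, 2n_1 T_+]$, yields constants $C_0,\kappa_0 > 0$ (depending on $n_1$, $T_+$, the Lipschitz constants of $f$, and the ellipticity of $g$, but not on $\sigma$ or $r_0$) such that
\begin{equation*}
\Omega_0 \defby \Bigset{\sup_{0\leqs t\leqs 2n_1 T_+}\bigpar{\abs{r_t-r^{\det}_t} + \abs{\ph_t-\ph^{\det}_t}} \leqs h/8}
\end{equation*}
satisfies $\fP^{(r_0,0)}(\Omega_0) \geqs 1 - C_0\e^{-\kappa_0/\sigma^2}$ uniformly in $r_0\in E$.

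On $\Omega_0$, the positivity and boundedness of $\dot\ph$ force $\tau_{n_1} = n_1 T_+ + \Order{h}$, which combined with Lipschitz continuity of $t\mapsto r^{\det}_t$ gives $\abs{r^{\det}_{\tau_{n_1}} - P^{n_1}(r_0)} \leqs h/8$ (after a harmless initial reduction of $h$). Whenever the killed chain survives up to $\tau_{n_1}$, the triangle inequality yields
\begin{equation*}
\abs{r_{\tau_{n_1}}} \leqs \abs{r_{\tau_{n_1}}-r^{\det}_{\tau_{n_1}}} + \abs{r^{\det}_{\tau_{n_1}}-P^{n_1}(r_0)} + \abs{P^{n_1}(r_0)} \leqs h/8 + h/8 + h/2 < h\;,
\end{equation*}
so $X_{n_1}\in A$. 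If instead the unkilled path crosses $\set{r=1-\delta}$ before $\tau_{n_1}$, the killed chain is sent to the cemetery and $X_{n_1}\notin E$. In either alternative, on $\Omega_0$ one has $X_{n_1}\notin E\setminus A$, hence $\set{X_{n_1}\in\Ac} \subset \Omega_0^c$ and $K_{n_1}(x,\Ac) \leqs C_0\e^{-\kappa_0/\sigma^2}$ uniformly in $x\in E$.

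The main potential obstacle is initial conditions $r_0$ close to the killing boundary $1-\delta$, for which $\Omega_0$ does not a priori exclude the unkilled path from hitting $\set{r=1-\delta}$. This is resolved by the observation that we only need to bound the subprobability of being in $E\setminus A$, not the total non-exit probability: on $\Omega_0$ every surviving trajectory lands in $A$ at time $\tau_{n_1}$, while killed trajectories contribute to the cemetery, so neither contributes to $K_{n_1}(x,\Ac)$. This is precisely what allows the Freidlin--Wentzell concentration bound to be applied uniformly on all of $E$ without conditioning on survival.
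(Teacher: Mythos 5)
Your proposal is correct and follows essentially the same route as the paper: deterministic relaxation to the stable orbit in a time (equivalently, a number $n_1$ of Poincar\'e iterates) of order one, a sample-path concentration estimate of the type of \cite[Theorem~5.1.18]{BGbook} giving an exceptional set of probability $\Order{\e^{-\kappa/\sigma^2}}$ uniformly in the starting point, and the observation that on the good event every surviving path lands in $A$ while killed paths go to the cemetery and therefore never contribute to $K_{n_1}(x,\Ac)$. The only noteworthy differences are cosmetic: the paper works in continuous time and takes care of the local-Lyapunov-exponent factor $\chi(t)$ restricting the admissible deviation size in that estimate (it grows only linearly after an initial transient), whereas your fixed-window Gronwall--Bernstein version suffices because the constants may depend on $h$ and $n_1$; also, your intermediate claim $\tau_{n_1}=n_1T_++\Order{h}$ is not literally correct (the random crossing time should be compared with the deterministic crossing time, which need not be $n_1T_+$ during the transient), but since $\ph_t$ and $\ph^{\det}_t$ are uniformly close and $\dot\ph^{\det}$ is bounded below, the conclusion $\abs{r^{\det}_{\tau_{n_1}}-P^{n_1}(r_0)}=\Order{h}$ that you draw from it stands.
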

\begin{proof}
Consider a deterministic solution $x^{\det}_t=(r^{\det}_t,\ph^{\det}_t)$ with
initial condition $x_0=(r_0,0)$. The stability of the orbit in $r=0$ implies
that $x^{\det}_t$ will reach a neighbourhood of size $h/2$ of this orbit in a
time $T$ of order $1$. By~\cite[Theorem~5.1.18]{BGbook}, we have for all
$t\geqs0$
\begin{equation}
 \label{ls04}
 \biggprobin{x_0}{\sup_{0\leqs s\leqs t}\norm{x_t-x^{\det}_t}>h_0}
 \leqs C_0(1+t)\e^{-\kappa_0 h_0^2/\sigma^2}
\end{equation} 
for some constants $C_0,\kappa_0>0$. The estimate holds for all $h_0\leqs
h_1/\chi(t)$, where $h_1$ is another constant, and $\chi(t)$ is related to the
local Lyapunov exponent of $x^{\det}_t$. Though $\chi(t)$ may grow exponentially
at first, it will ultimately (that is after a time of order $1$) grow at most
linearly in time, because $x^{\det}_t$ is attracted by the stable orbit. Thus we
have $\chi(T)\leqs 1+ CT$ for some constant $C$. Applying~\eqref{ls04} with
$h_0=h/2$, we find that any sample path which is not killed before time $n_1$
close to $T$ will hit $A$ with a high probability, which yields the result. 
\end{proof}

Let $X_{1}^{x_{1}}$ and $X_{1}^{x_{2}}$ denote the values of the first component
$r_{t}$ of the solution of the SDE~\eqref{ls01} with initial condition $x_{1}$
or $x_{2}$, respectively, at the random time at which $\varphi_{t}$ first
reaches the value~$1$. Note that both processes are driven by the same
realization of the Brownian motion.

\begin{prop}[Bound on the difference of two orbits]
\label{prop_difference} 
There exist constants $h_0, c>0$ and $\rho<1$ such that 
\begin{equation}
 \label{ls05}
 \bigprob{\abs{X^{x_2}_1-X^{x_1}_1} \geqs \rho \abs{x_2-x_1}}
 \leqs \e^{-c/\sigma^2}
\end{equation} 
holds for all $x_1,x_2\in A=[-h_0,h_0]$. 
\end{prop}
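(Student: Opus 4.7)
The strategy combines the deterministic contraction near the stable periodic orbit (Lyapunov exponent $-\lambda_-$) with a coupling argument exploiting the fact that both copies of~\eqref{ls01} are driven by the same Brownian motion, so that noise contributions nearly cancel between the paths. First I would set up the deterministic piece. Let $P^{\det}: (-h_0, h_0) \to \R$ denote the deterministic Poincar\'e map $P^{\det}(x_0) = r^{\det}_{\tau^{\det}(x_0)}$, where $\tau^{\det}(x_0)$ is the time at which the deterministic angular variable starting at $(x_0, 0)$ first reaches $1$. Floquet theory together with the smoothness of $P^{\det}$ yields, for $h_0$ sufficiently small, a contraction constant $\rho_0 < 1$ (close to $\e^{-\lambda_- T_-}$) such that $\abs{P^{\det}(x_2) - P^{\det}(x_1)} \leqs \rho_0 \abs{x_2 - x_1}$ for all $x_1, x_2 \in A$. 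I would then fix $\rho \in (\rho_0, 1)$ and aim to show that the stochastic perturbation shifts the contraction constant by less than $\rho - \rho_0$ on an event of probability at least $1 - \e^{-c/\sigma^2}$.

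Second, I would implement the coupling. Subtracting the two instances of~\eqref{ls01}, the difference $(\Delta r_t, \Delta\ph_t) = (r_t^{x_2} - r_t^{x_1}, \ph_t^{x_2} - \ph_t^{x_1})$ satisfies a linear SDE with random, bounded coefficients in which, crucially, the diffusion terms are Lipschitz-controlled by $L\sigma(\abs{\Delta r_t} + \abs{\Delta\ph_t})$ by $\cC^1$-regularity of $g_r, g_\ph$. The drift of $\Delta r_t$ contracts at rate $\lambda_- + \bigOrder{h_0}$, while the drift of $\Delta\ph_t$ is only $\bigOrder{h_0}(\abs{\Delta r_t} + \abs{\Delta\ph_t})$ since $b_\ph = \bigOrder{r^2}$. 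Applying It\^o's formula to a suitable weighted combination $V_t = \alpha\abs{\Delta r_t}^2 + \beta\abs{\Delta\ph_t}^2$ produces a drift bounded above by $(-c + \bigOrder{h_0} + \bigOrder{\sigma^2})V_t$ plus a martingale whose quadratic variation is controlled by $\sigma^2 V_t^2$ (up to constants). A standard Gr\"onwall argument together with a Bernstein-type bound (Lemma~\ref{lem_Bernstein}) then yields that with probability at least $1 - \e^{-\kappa/\sigma^2}$,
\begin{equation*}
\sup_{0 \leqs t \leqs 2T_+} \bigpar{\abs{\Delta r_t} + \abs{\Delta\ph_t}} \leqs C \abs{x_2 - x_1}
\end{equation*}
for some constant $C$ independent of $\sigma$, and moreover each $\Delta r_t$ stays within $\bigOrder{\sigma\sqrt{\logs}\, \abs{x_2-x_1}}$ of its deterministic counterpart.

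Third, I would transfer this estimate to the random Poincar\'e map. Let $\tau_i$ be the first time $\ph_t^{x_i}$ reaches $1$; since $\dot\ph_t^{x_i} \geqs 1/(2T_+)$ on the relevant interval with high probability, the above gives $\abs{\tau_2 - \tau_1} \leqs C' \abs{x_2 - x_1}$. The decomposition
\begin{equation*}
X_1^{x_2} - X_1^{x_1} = \brak{P^{\det}(x_2) - P^{\det}(x_1)} + \brak{r^{x_2}_{\tau_2} - P^{\det}(x_2)} - \brak{r^{x_1}_{\tau_1} - P^{\det}(x_1)}
\end{equation*}
separates the deterministic contraction from the two noise-driven corrections. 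The main obstacle, and the crux of the argument, is to show that the \emph{difference} of these two corrections is bounded by $\bigOrder{\sigma^{\alpha} \abs{x_2 - x_1}}$ rather than merely $\bigOrder{\sigma\sqrt{\logs}}$: this is precisely what the coupled estimate from Step 2 provides, since each correction depends smoothly on its initial condition and the noise cancels to leading order. Combining, one obtains $\abs{X_1^{x_2} - X_1^{x_1}} \leqs (\rho_0 + \Order{h_0} + \Order{\sigma^\alpha}) \abs{x_2 - x_1}$ on an event of probability at least $1 - \e^{-c/\sigma^2}$; choosing $h_0$ and $\sigma_0$ small enough then makes this bounded by $\rho \abs{x_2 - x_1}$, proving the claim.
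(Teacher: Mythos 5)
Your basic engine — run both copies with the same Brownian motion, observe that the diffusion coefficient of the difference is Lipschitz-controlled by $\abs{\Delta r}+\abs{\Delta\ph}$, and use the Bernstein-type estimate of Lemma~\ref{lem_Bernstein} — is the same as in the paper, but the paper does not pass through the deterministic Poincar\'e map at all: it works directly with the difference process $(\xi_t,\eta_t)$, introduces the stopping times $\tau_\xi$ (exit from the contracting envelope $H\e^{-\lambda_- t}$) and $\tau_\eta$ (angular mismatch exceeding $h$), with the key choice $H=\xi_0+h=Lh$ so that the tolerance $h$ is \emph{proportional to} $\abs{x_2-x_1}$, and shows both exits are $\e^{-c/\sigma^2}$-unlikely before time $2$; the contraction is then read off from the envelope at the crossing of $\ph=1$. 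Two of your intermediate claims are too strong for the confidence level $1-\e^{-c/\sigma^2}$ you need. First, the drift of $V_t=\alpha\abs{\Delta r_t}^2+\beta\abs{\Delta\ph_t}^2$ cannot be bounded by $(-c+\Order{h_0}+\Order{\sigma^2})V_t$: there is no contraction in the $\ph$-direction (the difference of the $\ph$-drifts is only $\Order{h_0}(\abs{\Delta r}+\abs{\Delta\ph})$, with no negative part), so no choice of weights makes $V_t$ strictly contracting; you only get growth at rate $\Order{h_0}$, which is enough for the boundedness you actually use but not for the stated inequality. Second, at confidence $1-\e^{-\kappa/\sigma^2}$ the martingale part of the coupled difference (quadratic variation of order $\sigma^2\abs{x_2-x_1}^2$ per unit time) is only $\Order{\abs{x_2-x_1}}$, and the linearisation error involves the deviation of each single path from its deterministic orbit, which at that confidence level is only a small constant — so neither the $\Order{\sigma\sqrt{\logs}\,\abs{x_2-x_1}}$ closeness in Step 2 nor the $\Order{\sigma^\alpha\abs{x_2-x_1}}$ bound in Step 3 is available; only a small-constant multiple of $\abs{x_2-x_1}$ is, which would still suffice and is how the paper proceeds.

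The genuine gap is in the transfer to the Poincar\'e map (your Step 3). Your coupling estimate controls $r^{x_2}_t-r^{x_1}_t$ at a \emph{common} time, while $X^{x_2}_1-X^{x_1}_1$ compares the paths at two \emph{different} random times $\tau_2\neq\tau_1$. You bound $\abs{\tau_2-\tau_1}\leqs C'\abs{x_2-x_1}$, but you never control the increment $r^{x_2}_{\tau_2}-r^{x_2}_{\tau_1}$, and this increment is driven by \emph{fresh} noise of the single path, which does not cancel in the coupling: at confidence $1-\e^{-c/\sigma^2}$ it can only be bounded by a constant times $\sigma\sqrt{\abs{\tau_2-\tau_1}}\sim\sigma\sqrt{\abs{x_2-x_1}}$, which is not $\Order{\sigma^\alpha\abs{x_2-x_1}}$ and, once $\abs{x_2-x_1}$ is of order $\sigma^2$ or smaller, is not even $\Order{\abs{x_2-x_1}}$. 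So the sentence \lq\lq this is precisely what the coupled estimate from Step 2 provides\rq\rq\ is where the argument breaks: Step 2 provides no information about this time-mismatch term, and the claimed contraction constant $\rho_0+\Order{h_0}+\Order{\sigma^\alpha}$ does not follow. Any correct version must control the angular mismatch on a scale proportional to $\abs{x_2-x_1}$ (this is exactly the role of $\tau_\eta$, the choice $h\propto\xi_0$, and the invocation of Proposition~\ref{prop_lu_phi} in the paper) and must be content with a contraction statement that is only used down to separations of order $\sigma^2$, cf.\ the stopping time $N$ in~\eqref{ls06}.
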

\begin{proof}
Let $(\xi_t,\eta_t)$ denote the difference of the two sample paths started in
$(x_1,0)$ and $(x_2,0)$, respectively. It satisfies a system of the form 
\begin{align}
\nonumber
\6\xi_t &= -\lambda_-\xi_t\6t + b_1(\xi_t,\eta_t)\6t + \sigma
g_1(\xi_t,\eta_t)\6W_t\;, \\
\6\eta_t &= b_2(\xi_t,\eta_t)\6t + \sigma
g_2(\xi_t,\eta_t)\6W_t\;,
\label{ls05:1} 
\end{align}
with initial condition $(\xi_0,0)$, where we may assume that $\xi_0=x_2-x_1>0$.
Here $\abs{b_i(\xi,\eta)}\leqs M(\xi^2+\eta^2)$ and $\abs{g_i(\xi,\eta)}\leqs
M(\abs{\xi}+\abs{\eta})$ for $i=1,2$ (remember that both solutions are driven by
the same Brownian motion). Consider the stopping times 
\begin{align}
\nonumber
\tau_\xi &= \inf\setsuch{t>0}{\xi_t > H\e^{-\lambda_{-}t}}\;, \\
\tau_\eta &= \inf\setsuch{t>0}{\abs{\eta_t}>h}\;,
\label{ls05:2} 
\end{align}
where we set $H=\xi_0+h=Lh$. Writing $\eta_t$ in integral form and using
Lemma~\ref{lem_Bernstein}, we obtain 
\begin{equation}
 \label{ls05:3}
 \bigprob{\tau_\eta < \tau_\xi \wedge 2} \leqs \e^{-c_1/\sigma^2}
\end{equation} 
for some constant $c_1>0$, provided $h$ is smaller than some constant depending
only on $M$ and $L$. In a similar way, one obtains 
\begin{equation}
 \label{ls05:4}
 \bigprob{\tau_\xi < \tau_\eta \wedge 2} \leqs \e^{-c_2/\sigma^2}
\end{equation} 
for some constant $c_2>0$, provided $H$ is smaller than some constant depending
only on~$M$. It follows that 
\begin{equation}
 \label{ls05:5}
 \bigprob{\tau_\xi \wedge \tau_\eta \leqs 2} \leqs \e^{-c/\sigma^2}
\end{equation} 
for a $c>0$. Together with the control on the diffusion along $\ph$
(cf.~Proposition~\ref{prop_lu_phi}), we can thus guarantee that both sample
paths have crossed $\ph=1$ before time $2$, at a distance
\begin{equation}
 \label{ls05:6}
 \abs{X^{x_2}_1-X^{x_1}_1} \leqs \e^{-\lambda_-/2} (\xi_0+h)
\end{equation} 
with probability exponentially close to $1$. For any
$\rho\in(\e^{-\lambda_-/2},1)$, we can find $h$ such that the right-hand side
is smaller than $\rho\xi_0 = \rho\abs{x_2-x_1}$. This yields the result. 
\end{proof}

From~\eqref{ls05}, we immediately get 
\begin{equation}
 \label{ls07}
 \bigprob{\abs{X^{x_2}_n-X^{x_1}_n} \geqs \rho^n \abs{x_2-x_1}}
 \leqs n\e^{-c/\sigma^2}
 \qquad  \forall n\geqs 1\;.
\end{equation} 
Fix $a>0$ and let $N$ be the integer stopping time 
\begin{equation}
 \label{ls06} 
 N = N(x_1,x_2) =
\inf\bigsetsuch{n\geqs1}{\abs{X^{x_2}_n-X^{x_1}_n}<a\sigma^2}\;.
\end{equation} 
If $n_0$ is such that $\rho^{n_0}\mbox{diam}(A)\leqs a\sigma^2$, then~\eqref{ls07} implies $\prob{N>n_0}\leqs n_0\e^{-c/\sigma^2}$ whenever $x_1,x_2\in A$. 
Using Proposition~\ref{prop_contraction} and the Markov property, we 
obtain the following improvement.

\begin{prop}[Bound on the hitting time of a small ball]
\label{prop_difference_improved} 
There is a constant $C_2$ such that for any $k\geqs 1$ and all $x_1,x_2\in A$,
we have 
\begin{equation}
 \label{ls08}
 \bigprob{N(x_1,x_2)>kn_0\mid X^{x_1}_{l},X^{x_2}_{l}\in A \,\forall l\leqs k
n_{0}} 
 \leqs 
 \bigpar{C_2\abs{\log(a\sigma^2)}\e^{-\kappa_2/\sigma^2}}^{k}\;,
\end{equation} 
where $\kappa_2= c$. 
\end{prop}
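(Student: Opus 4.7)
The plan is to partition the interval $[0,kn_0]$ into $k$ successive blocks of length $n_0$ and use the Markov property to reduce the $k$-block estimate to an iterated single-block bound obtained from Proposition~\ref{prop_difference}. Throughout, write $G_j=\set{X^{x_1}_l,X^{x_2}_l\in A\ \forall l\leqs jn_0}$, so that the event we are conditioning on is $G_k$.

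First I would establish the single-block estimate: uniformly over starting pairs $y_1,y_2\in A$,
\begin{equation*}
h(y_1,y_2) \defby \bigprob{N(y_1,y_2)>n_0,\,X^{y_1}_l,X^{y_2}_l\in A\ \forall l\leqs n_0}\leqs n_0\e^{-c/\sigma^2}.
\end{equation*}
Since $\abs{y_1-y_2}\leqs\mbox{diam}(A)$, the iterated contraction estimate~\eqref{ls07}---whose derivation only uses Proposition~\ref{prop_difference} at each step while both paths remain in $A$---gives $\abs{X^{y_2}_{n_0}-X^{y_1}_{n_0}}<\rho^{n_0}\mbox{diam}(A)$ outside a set of probability at most $n_0\e^{-c/\sigma^2}$. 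By the choice of $n_0$, $\rho^{n_0}\mbox{diam}(A)\leqs a\sigma^2$, which forces $N(y_1,y_2)\leqs n_0$ on the complementary event, proving the bound on $h$.

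Applying the strong Markov property at time $jn_0$ then yields
\begin{equation*}
\bigprob{N>(j+1)n_0,\,G_{j+1}}\leqs \Bigexpec{\indexfct{N>jn_0,\,G_j}\,h\bigpar{X^{x_1}_{jn_0},X^{x_2}_{jn_0}}} \leqs n_0\e^{-c/\sigma^2}\,\bigprob{N>jn_0,\,G_j},
\end{equation*}
and iterating $k$ times gives $\bigprob{N>kn_0,\,G_k}\leqs\bigpar{n_0\e^{-c/\sigma^2}}^{k}$. Dividing by $\prob{G_k}$ converts this into the desired conditional bound; thanks to Propositions~\ref{prop_l0s} and~\ref{prop_contraction} the denominator is exponentially close to $1$, so the resulting correction is harmlessly absorbed into the constants. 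Finally, the definition $\rho^{n_0}\mbox{diam}(A)\leqs a\sigma^2$ provides the elementary estimate $n_0\leqs C_2\abs{\log(a\sigma^2)}$, yielding the announced bound with $\kappa_2=c$.

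The main technical point is to preserve the requirement that both paths remain in $A$ throughout each block in order to apply the iterated contraction~\eqref{ls07}; this is precisely what the indicator in the definition of $h$ (and the event $G_{j+1}$) enforces, so the Markov property cleanly decouples the successive blocks and the uniformity over intermediate starting configurations comes for free.
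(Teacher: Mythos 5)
Your proof is correct and follows essentially the same route as the paper: the single-block estimate from the iterated contraction~\eqref{ls07} together with the choice of $n_0$, the lower bound on the probability of remaining in $A$ coming from~\eqref{ls03a}, and iteration via the Markov property at multiples of $n_0$, with $n_0=\Order{\abs{\log(a\sigma^2)}}$. The only (cosmetic) difference is bookkeeping: you bound the joint probability $\prob{N>kn_0,\,G_k}$ by $(n_0\e^{-c/\sigma^2})^k$ and divide by $\prob{G_k}\geqs(1-2n_0C_1\e^{-\kappa/\sigma^2})^k$ once at the end, whereas the paper bounds the one-block conditional probability first and then iterates; since the denominator correction also factors per block, both yield the stated $k$-th power bound.
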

\begin{proof}
By the definition of $n_{0}$ and~\eqref{ls03a}, for any $x_1,x_2\in A$ we have 
\begin{align}
\nonumber
\bigprob{N > n_0 \mid X^{x_1}_{l},X^{x_2}_{l}\in A \,\forall l\leqs n_{0}} 
\leqs{}&{} 
\frac{\displaystyle \prob{N>n_{0}}}{\displaystyle
\prob{X^{x_1}_{l},X^{x_2}_{l}\in A \,\forall l\leqs n_{0}}} \\
\nonumber
{}\leqs{}&{} \frac{\displaystyle n_{0}\e^{-c/\sigma^{2}}}{\displaystyle 1-n_{0}C_{1}\e^{-\kappa/\sigma^{2}}}
\leqs 2n_{0}\e^{-c/\sigma^{2}}\;.
\end{align}
Thus the result follows by applying the Markov property at times which
are multiples of $n_0$, and recalling that $n_0$ has order
$\abs{\log(a\sigma^2)}$. 
\end{proof}

Combining the last estimates with Proposition~\ref{prop_spectralgap}, we
finally obtain the following result.

\begin{theorem}[Spectral gap estimate for $\Ks$]
\label{thm_spectralgap} 
 There exists a constant $c>0$ such that for sufficiently small $\sigma$, the
first eigenvalue of {\rm $\Ks$} satisfies 
 \begin{equation}
  \label{ls10}
  \abs{\lones} \leqs \e^{-c/\logs}
 \end{equation} 
\end{theorem}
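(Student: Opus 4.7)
The plan is to apply Proposition~\ref{prop_spectralgap} to the iterated kernel $(\Ks)^n$ for an appropriate power $n$ of order $\logs$, then extract the $n$-th root: since the spectrum of $(\Ks)^n$ is the $n$-th power of the spectrum of $\Ks$, any bound of the form $\abs{\lones}^n \leqs 1/2$ immediately yields $\abs{\lones}\leqs \e^{-(\log 2)/n}\leqs \e^{-c/\logs}$ for a suitable $c>0$.

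Fix $h>0$ sufficiently small and set $A=[-h,h]$. Using Proposition~\ref{prop_difference_improved} with $a=r_{1/4}$ (the constant from Lemma~\ref{lem_Harnack} with $\eta=1/4$), the quantity $\rho_{n-1}$ appearing in Proposition~\ref{prop_positivity} is bounded by $(C_2\abs{\log(a\sigma^2)}\e^{-\kappa_2/\sigma^2})^k$ when $n-1=kn_0$. Choose $k$ to be a constant large enough that $k\kappa_2>C$, where $C$ is the Harnack constant of Proposition~\ref{prop_positivity}; then $\rho_{n-1}\e^{C/\sigma^2}\leqs 1/4$ for $\sigma$ small, and Proposition~\ref{prop_positivity} (applied with $\eta=1/4$) gives $\sup_{x\in A} k^A_n(x,y)\leqs \frac32\inf_{x\in A}k^A_n(x,y)$. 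Since $n_0$ is of order $\abs{\log(a\sigma^2)}$, the resulting $n$ is of order $\logs$.

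To apply Proposition~\ref{prop_spectralgap} we need the same type of ratio bound for the substochastic density $k^s_n$ itself, not for the conditioned density $k^A_n(x,y)=k^s_n(x,y)/\Ks_n(x,A)$. Iterating~\eqref{ls03a} via the Markov property yields $\sup_{x\in A}\brak{1-\Ks_n(x,E)}\leqs nC_1\e^{-\kappa/\sigma^2}$; combining this with Proposition~\ref{prop_contraction} and the Markov property at time $n_1$ gives $\sup_{x\in E}\Ks_n(x,\Ac)\leqs nC_1\e^{-\kappa/\sigma^2}$. Hence $\Ks_n(x,A)\geqs 1-\Order{n\e^{-\kappa/\sigma^2}}$ for $x\in A$, the ratio $\Ks_n(x_1,A)/\Ks_n(x_2,A)$ is $1+\Order{n\e^{-\kappa/\sigma^2}}$, and the hypothesis $m(y)\leqs k^s_n(x,y)\leqs L\, m(y)$ is satisfied on $A$ with $m(y)=\inf_{x\in A}k^s_n(x,y)$ and $L=3/2$ (say, chosen deliberately away from $1$ so that the coefficients in the bound of Proposition~\ref{prop_spectralgap} stay under control).

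Plugging into Proposition~\ref{prop_spectralgap} applied to $(\Ks)^n$, and using $(\lOs)^n\geqs 1-n\e^{-\kappa/\sigma^2}$ from Proposition~\ref{prop_l0s}, the main term $(\lOs)^n L-1\leqs L-1+\Order{n\e^{-\kappa/\sigma^2}}$ is at most $1/2+o(1)$, the prefactor $(\lOs)^nL/((\lOs)^nL-1)$ is bounded by a constant, and the remaining contributions $2\sup_{x\in E}\Ks_n(x,\Ac)$ and $\sup_{x\in A}\brak{1-\Ks_n(x,E)}$ are exponentially small in $1/\sigma^2$ by the preceding estimates. This yields $\abs{\lones}^n\leqs 3/5$, say, for $\sigma$ small enough, and taking the $n$-th root completes the proof. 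The main delicate point is the bookkeeping step that transfers the ratio bound from the conditioned density $k^A_n$ (where Proposition~\ref{prop_positivity} applies directly) to the substochastic density $k^s_n$, and checking that the constant $k$, and therefore the exponent $1/n$, can be chosen independently of $\sigma$ so that the final bound indeed has the form $\e^{-c/\logs}$.
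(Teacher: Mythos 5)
Your proposal is correct and follows essentially the same route as the paper's proof: apply Proposition~\ref{prop_spectralgap} to the $n$-step kernel with $n$ of order $\logs$, bound $L$ via Proposition~\ref{prop_positivity} together with Proposition~\ref{prop_difference_improved} (choosing $k$ so that $k\kappa_2>C$), control $\gamma^n(A)$, $p_{\text{\rm kill}}$ and $(\lOs)^n$ by Propositions~\ref{prop_contraction} and~\ref{prop_l0s}, and take the $n$-th root. The only differences are cosmetic (your $L=3/2$ and final bound $3/5$ versus the paper's $L-1\leqs 3/8$ and $1/2$, and $n=kn_0+1$ versus $n=k(n_0+n_1)$); your explicit transfer from the conditioned density $k^A_n$ to the substochastic one is exactly what the paper encodes in the denominator $\inf_{x\in A}K_n(x,A)$ of its bound on $L$.
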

\begin{proof}
We take $n=k(n_0+n_1)$, where $k$ will be chosen below. Fix $h>0$ and set $A=(-h,h)$. We apply
Proposition~\ref{prop_spectralgap} for the Markov chain $\Ks_{n}$, conditioned on not leaving $A$, with $m(y)=\inf_{x\in A}k_{n}(x,y)$, which yields
\begin{equation}
 \label{ls10:1}
 \abs{\lambda_1}^n \leqs 
 \max\biggset{2\gamma^n(A) \;,\;
(\lOs)^n L - 1 + \gamma^n(A)
 \frac{(\lOs)^nL}
 {(\lOs)^nL-1}
 \biggbrak{1+\frac{1}{(\lOs)^n -\gamma^n(A)}}
 }\;.
\end{equation} 
Proposition~\ref{prop_contraction} shows that $\gamma^n(A)\leqs \gamma^{n}([-h/2,h/2])$ is exponentially
small, since $n\geqs n_1$. Proposition~\ref{prop_l0s} shows that $(\lOs)^n$
is bounded below by $1-n\e^{-\kappa/\sigma^2}$. It thus remains to estimate
$L$. Proposition~\ref{prop_positivity} shows that 
\begin{equation}
\label{ls10:2} 
 L \leqs \frac{1 + \eta + \sup_{x_{1},x_{2}\in A}
 \prob{N(x_{1},x_{2})>n-1}\e^{C/\sigma^2}}{\inf_{x\in A}K_{n}(x,A)}\;,
\end{equation} 
where the parameter $a$ in the definition of the stopping time  $N$ is
determined by the choice of $\eta$. We thus fix, say, $\eta=1/4$, and  $k =
\intpartplus{C/\kappa_2}+1$. In this way, the numerator in~\eqref{ls10:2} is
exponentially close to $1+\eta$. Since $n$ has order $\logs$, the denominator
$K_{n-1}(x,A)$ is still exponentially close to $1$, by the same argument as in
Proposition~\ref{prop_l0s}. Making $\sigma$ small enough, we can guarantee that
$L-1\leqs 3/8$ and $(\lOs)^n L>1$, and thus $\abs{\lambda_1}^n \leqs 1/2$. The
result thus follows from the fact that $n$ has order $\logs$.  
\end{proof}


\section{Distribution of exit locations}
\label{sec_el}

We can now complete the proof of Theorem~\ref{main_theorem}, which is close in
spirit to the proof of~\cite[Theorem~2.3]{BG7}. 
We fix an initial condition $(r_0,0)$ close to the stable periodic orbit and a
small positive constant $\Delta$. Let 
\begin{equation}
 \label{el01} 
 P_\Delta(\ph) = \bigprobin{r_0,0}{\ph_{\tau_-}\in[\ph_1,\ph_1+\Delta]}
\end{equation} 
be the probability that the first hitting of level $1-\delta$
occurs in the interval $[\ph_1,\ph_1+\Delta]$. 
If we write $\ph_1=k+s$ with $k\in\N$ and $s\in[0,1)$, we have 
by the same argument as the one given in~\eqref{resrp6}, 
\begin{equation}
 \label{el02}
 P_\Delta(k+s) = C(r_0)(\lOs)^k \bigprobin{\piOs}{\ph_{\tau_-}\in[s,s+\Delta]}
 \biggbrak{1+\biggOrder{\biggpar{\frac{\lones}{\lOs}}^k}}\;,
\end{equation} 
where $C(r_0)$ is a normalising constant, $\piOs$ is the quasistationary
distribution for $\Ks$, and 
\begin{equation}
 \label{el03}
 \bigprobin{\piOs}{\ph_{\tau_-}\in[s,s+\Delta]}
 = \int^{1-\delta}_{-L} \piOs(r) \bigprobin{r}{\ph_{\tau_-}\in[s,s+\Delta]}\6r\;.
\end{equation} 
Note that $\piOs$ is concentrated near the stable periodic orbit. By the large
deviation principle and our assumption on the uniqueness of the minimal path
$\gamma_\infty$, $\probin{r}{\ph_{\tau_-}\in[s,s+\Delta]}$ is maximal at the
point $s^*$ where $\gamma_\infty$ crosses the level $1-\delta$, and decays
exponentially fast in $1/\sigma^2$ away from $s^*$. In addition, our spectral
gap estimate Theorem~\ref{thm_spectralgap} shows that the error term in~\eqref{el02} 
has order $\delta$ as soon as $k$ has order
$\abs{\log\sigma}\abs{\log\delta}$. For these $k$ we thus have 
\begin{equation}
 \label{el03b}
 P_\Delta(k+s) = C_1(\lOs)^k \e^{-J(s)/\sigma^2}
\bigbrak{1+\Order{\delta}}\;,
\end{equation}
where $J(s)$ is periodic with a unique minimum per period in $s^*$. The minimal
value $J(s^*)$ is close to the value of the rate function of the optimal
path up to level $1-\delta$. 

Now let us fix an initial condition $(1-\delta,\ph_1)$, and consider the
probability 
\begin{equation}
 \label{el04}
 Q_\Delta(\ph_1,\ph_2) =
\bigprobin{1-\delta,\ph_1}{\ph_{\tau}\in[\ph_2,\ph_2+\Delta]}
\end{equation} 
of first reaching the unstable orbit during the interval $[\ph_2,\ph_2+\Delta]$ when starting at level $1-\delta$.
By the same argument as above, 
\begin{equation}
 \label{el05}
 Q_\Delta(\ph_1,\ell+s) = C_2(\lOu)^\ell
\bigprobin{\piOu}{\ph_{\tau}\in[s,s+\Delta]}
 \biggbrak{1+\biggOrder{\biggpar{\frac{\loneu}{\lOu}}^\ell}}\;.
\end{equation} 
On the other hand, by the large-deviation principle (cf.\
Proposition~\ref{prop_ldp_nonlinear}), we have 
\begin{equation}
 \label{el06}
 Q_\Delta(\ph_1,\ell+s) = D_\ell(s) \exp\biggset{-\frac{1}{\sigma^2}
\Bigbrak{I_\infty + \frac{c(s)}{2} \e^{-2\ell\lambda_+T_+}
\bigbrak{1 + \Order{\e^{-2\ell\lambda_+T_+}} + \Order{\delta}}}}\;,
\end{equation} 
where 
\begin{equation}
 \label{el06b}
 c(s) = \delta^2 \e^{-2\lambda_+T_+s}
 \frac{\hper(s)}{\hper(\ph_1)^2}\;,
\end{equation} 
and the $\sigma$-dependent prefactor satisfies $\lim_{\sigma\to0} \sigma^2\log D_\ell(s) = 0$.
In particular we have
\begin{equation}
 \label{el06c}
 -\lim_{\sigma\to0} \sigma^2 \log Q_\Delta(\ph_1,\ell+s) 
 = I_\infty + \frac{c(s)}{2} \e^{-2\ell\lambda_+T_+}
\bigbrak{1 + \Order{\e^{-2\ell\lambda_+T_+}} + \Order{\delta}}\;.
\end{equation} 
This implies in particular that the kernel $\Ku$ has a spectral gap. Indeed,
assume by contradiction that $\lim_{\sigma\to0} (\loneu/\lOu) = 1$. Using this
in~\eqref{el05}, we obtain that $\sigma^2 \log
Q_\Delta(\ph_1,\ell+s)$ converges to a quantity independent of $\ell$, which
contradicts~\eqref{el06c}. We thus conclude that $\lim_{\sigma\to0}
(\loneu/\lOu) =\rho < 1$. In fact,~\eqref{el06c} suggests that
$\rho\simeq\e^{-2\lambda_+T_+}$, but we will not attempt to prove this here. 
The existence of a spectral gap shows that for all $\ell \gg \abs{\log\delta}$,
we have 
\begin{equation}
 \label{el06d}
 D_\ell(s) = (\lOu)^\ell D^*(s) \bigbrak{1+\Order{\delta}}\;,
\end{equation} 
where $D^*(s) = \lim_{\ell\to\infty} (\lOu)^{-\ell} D_\ell(s)$. 
Proposition~\ref{prop_sq2} and the existence of a spectral gap imply that 
\begin{equation}
 \label{el06E}
 D^*(s) = C_3(\sigma) D_{rr}(1,s) \e^{-2\lambda_+T_+s}
\brak{1+\Order{\Delta^\beta} + \Order{\Delta}}
\end{equation} 
with $\beta>0$. 
Let us now define 
\begin{equation}
 \label{el07}
 \theta(s) = -\frac12 \log c(s)
 = \lambda_+ T_+ s - \frac12\log\biggpar{\delta^2 
 \frac{\hper(s)}{\hper(\ph_1)^2}}
 \;.
\end{equation} 
Note that 
$\theta(s+1)=\theta(s)+\lambda_+T_+$. 
Furthermore, the differential equation satisfied by $\hper(s)$
[cf.~\eqref{equation_for_hper}] and~\eqref{el06b} show that 
\begin{equation}
 \label{el07A}
 \theta'(s) = \frac{D_{rr}(1,s)}{2\hper(s)}
 = \frac{\delta^2}{2\hper(\ph_1)^2}
 D_{rr}(1,s)\e^{-2\lambda_+T_+ s}
\e^{2\theta(s)}\;, 
\end{equation} 
and thus 
\begin{equation}
 \label{el07B}
 D^*(s) = 2C_3 \frac{\hper(\ph_1)^2}{\delta^2} \,
 \theta'(s) \e^{-2\theta(s)}
\brak{1+\Order{\Delta^\beta} + \Order{\Delta}}\;.
\end{equation}
Since $\sigma^{-2}=\e^{2\abs{\log\sigma}}$, 
we can rewrite~\eqref{el06} in the form
\begin{equation}
 \label{el08}
 Q_\Delta(\ph_1,\ell+s) = D_\ell(s) \e^{-I_\infty/\sigma^2}
 \exp\biggset{-\frac12
\e^{-2 \bigbrak{\theta(s) + \ell\lambda_+T_+ - \abs{\log\sigma} 
}}\bigbrak{1+ \Order{\e^{-2\ell\lambda_+T_+}}+\Order{\delta}}}
\;.
\end{equation}
In a similar way, the prefactor $D_\ell(s)$ can be rewritten, for 
$\ell\gg\abs{\log\delta}$, as
\begin{equation}
 \label{el09}
 D_\ell(s) = \sigma^2 \e^{2\theta(s)}D^*(s) 
 \exp\biggset{-2 \Bigbrak{\theta(s) + 
 \ell\lambda_+T_+\brak{1+\Order{\delta}} -
 \abs{\log\sigma}}}\;.
\end{equation} 
Introducing the notation 
\begin{equation}
 \label{el23}
 A(x) = \exp\Bigset{-2x - \frac12 \e^{-2x}}\;,
\end{equation} 
we can thus write 
\begin{equation}
 \label{el10}
 Q_\Delta(\ph_1,\ell+s) =2 \sigma^2 \e^{-I_\infty/\sigma^2}
 \e^{2\theta(s)}D^*(s) 
 A \Bigpar{\theta(s) + 
 \ell\lambda_+T_+ -
 \abs{\log\sigma} + \Order{\delta \ell}}\;.
\end{equation} 
Let us finally consider the probability
\begin{equation}
\label{el20} 
 \bigprobin{r_0,0}{\ph_{\tau}\in[n+s,n+s+\Delta]}\;.
\end{equation}
As in~\cite[Section~5]{BG7}, it can be written as an integral over
$\ph_1$, which can be approximated by the sum 
\begin{equation}
 \label{el21}
 S = \sum_{\ell=1}^{n-1}  P_\Delta(n-\ell+s^*) Q_\Delta(s^*,\ell+s)\;.
\end{equation} 
Note that since $\Ku$ is defined by killing the process when it reaches a
distance $2\delta$ from the unstable periodic orbit, we have to show that the
contribution of paths switching back and forth several times between distance
$\delta$ and $2\delta$ is negligible (cf.~\cite[Section~4.3]{BG7}). This is the
case here as well, in fact we have used the same argument in the proof of
Proposition~\ref{prop_lu_upperbound2}. 
From here on, we can proceed as in~\cite[Section~5.2]{BG7}, to obtain  
\begin{align}
\nonumber
S &= C_1\sigma^2 \e^{-(I_\infty+J(s^*))/\sigma^2} (\lOs)^n
 \e^{2\theta(s)}D^*(s)S_1\\
 &=2C_1C_3 \sigma^2 \frac{\hper(\ph_1)^2}{\delta^2} \,
 \e^{-(I_\infty+J(s^*))/\sigma^2} (\lOs)^n
 \theta'(s) 
S_1 \bigbrak{1+\Order{\Delta^{\beta}}+\Order{\Delta}}
 \;,
 \label{el22}
\end{align} 
where 
\begin{equation}
 \label{el25}
 S_1 = 
 \sum_{\ell=-\infty}^\infty (\lOs)^{-\ell} 
 A\Bigpar{\theta(s) + \ell\lambda_+T_+ - \abs{\log\sigma} + \Order{\delta} }
\bigbrak{1+\Order{\delta\abs{\log\delta}}}\;. 
\end{equation} 
The main point is to note that only indices $\ell$ in a window of order 
$\abs{\log{\delta}}$ contribute to the sum, and that
$A(x+\eps)=A(x)[1+\Order{\eps}]$ for $x\geqs0$. Also, since $\lOs$ is
exponentially close to $1$, the factor $(\lOs)^{-\ell}$ can be replaced by $1$,
with an error which is negligible compared to $\delta\abs{\log\delta}$.
Extending the bounds to $\pm\infty$ only generates a small error. 
Now~\eqref{el22} and~\eqref{el25} yield the main result, after performing the
change of variables $\ph\mapsto\theta(\ph)$, and replacing $\beta$ by its
minimum with $1$.  


\appendix

\section{A Bernstein--type estimate}

\begin{lemma}[{\cite[Thm.~37.8]{RogersWilliams}}]
\label{lem_Bernstein}
Consider the martingale 
\begin{equation}
 M_t = \int_0^t g(X_s,s)\6W_s\;,
\end{equation} 
where $X_s$ is adapted to the filtration generated by $W_s$. Assume 
\begin{equation}
 g(X_s,s)\transpose{g(X_s,s)} \leqs G(s)^2
\end{equation} 
almost surely and that 
\begin{equation}
 V(t) = \int_0^t G(s)^2\6s < \infty\;.
\end{equation} 
Then
\begin{equation}
 \bigprob{\sup_{0\leqs s\leqs t} M_s > L} \leqs \e^{-L^2/2V(t)}
\end{equation} 
holds for all $L>0$. 
\end{lemma}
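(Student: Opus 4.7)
The plan is to apply a standard Chernoff-type exponential-martingale argument combined with Doob's maximal inequality. For each parameter $\theta>0$, I would work with
\begin{equation*}
 Z^\theta_s = \exp\biggset{\theta M_s - \frac{\theta^2}{2}\langle M\rangle_s}\;,
 \qquad \langle M\rangle_s = \int_0^s g(X_u,u)\transpose{g(X_u,u)}\6u\;.
\end{equation*}
By It\^o's formula $Z^\theta$ is a nonnegative local martingale starting from $1$. The hypothesis $\langle M\rangle_s\leqs V(s)\leqs V(t)<\infty$ on $[0,t]$ makes Novikov's criterion trivial, so $Z^\theta$ is a true martingale on $[0,t]$ with $\E[Z^\theta_t]=1$; alternatively, localising at $\tau_n = \inf\setsuch{s}{\langle M\rangle_s\geqs n}$ and passing to the limit via Fatou yields the weaker supermartingale inequality $\E[Z^\theta_t]\leqs 1$, which is all I need.

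The key pathwise observation is that, since $\langle M\rangle_s\leqs V(t)$,
\begin{equation*}
 \theta M_s = \log Z^\theta_s + \tfrac12\theta^2\langle M\rangle_s
 \leqs \log Z^\theta_s + \tfrac12\theta^2 V(t)\;,
\end{equation*}
so the event $\bigset{\sup_{0\leqs s\leqs t} M_s > L}$ is contained in $\bigset{\sup_{0\leqs s\leqs t} Z^\theta_s > \exp(\theta L - \tfrac12\theta^2 V(t))}$. Doob's $L^1$-maximal inequality for the nonnegative martingale $Z^\theta$ then gives
\begin{equation*}
 \bigprob{\sup_{0\leqs s\leqs t} M_s > L}
 \leqs \E[Z^\theta_t]\,\e^{-\theta L + \theta^2 V(t)/2}
 \leqs \exp\bigset{-\theta L + \tfrac12 \theta^2 V(t)}\;.
\end{equation*}
Optimising by choosing $\theta = L/V(t)$, which minimises the right-hand side, reproduces exactly the announced bound $\e^{-L^2/2V(t)}$.

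The only genuine technical point (the one Rogers--Williams handle in the general statement) is the promotion of $Z^\theta$ from local to true (super-)martingale, so that Doob's inequality can be applied; under the deterministic bound $\langle M\rangle\leqs V$ available here this step is essentially free, and the rest of the argument is just an exponential Chebyshev inequality followed by the one-dimensional optimisation in $\theta$.
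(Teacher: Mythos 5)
Your argument is correct: the exponential supermartingale $Z^\theta$, the pathwise inclusion of events using $\langle M\rangle_s\leqs V(t)$, Doob's maximal inequality, and the optimisation $\theta=L/V(t)$ give exactly the stated bound, and the deterministic bound on the quadratic variation indeed makes the martingale (or at least supermartingale, with $\E[Z^\theta_0]=1$) property unproblematic. The paper itself gives no proof but simply cites Rogers--Williams, Thm.~37.8, and your proof is precisely the standard exponential-martingale argument behind that reference, so there is nothing to compare beyond noting the agreement.
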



{\small
\bibliography{../../BDGK}
\bibliographystyle{amsalpha}               
}


\newpage
\tableofcontents


\vfill

\bigskip\bigskip\noindent
{\small
Nils Berglund \\ 
Universit\'e d'Orl\'eans, Laboratoire {\sc Mapmo} \\
{\sc CNRS, UMR 7349} \\
F\'ed\'eration Denis Poisson, FR 2964 \\
B\^atiment de Math\'ematiques, B.P. 6759\\
45067~Orl\'eans Cedex 2, France \\
{\it E-mail address: }{\tt nils.berglund@univ-orleans.fr}

\bigskip\noindent
Barbara Gentz \\ 
{\sc Faculty of Mathematics, University of Bielefeld} \\
P.O. Box 10 01 31, 33501~Bielefeld, Germany \\
{\it E-mail address: }{\tt gentz@math.uni-bielefeld.de}

}



\end{document}